\renewcommand{\leq}{\leqslant}
\renewcommand{\geq}{\geqslant}
\newcommand{\Norm}[1]{\left\|#1\right\|}
\newcommand{\NormInfinity}[1]{\left\|#1\right\|_{\infty}}
\newcommand{\hNorm}[1]{\left\|#1\right\|_{\mathcal{H}}}
\newcommand{\BlackBox}{\rule{1.5ex}{1.5ex}}  
\newcommand{\qed}{\hfill\BlackBox\\[2mm]}
\newcommand{\obspace}{\Xi}
\newcommand{\parspace}{\mathbb{S}}
\newcommand{\risk}{\mathcal{L}}
\newcommand{\cfun}{\gamma}
\DeclarePairedDelimiter\floor{\lfloor}{\rfloor}
\newcommand{\cM}{\mathcal{M}}
\newcommand{\cT}{\mathcal{T}}
\newcommand{\cX}{\mathcal{X}}
\newcommand{\cY}{\mathcal{Y}}
\newcommand{\bE}{\mathbb{E}}
\newcommand{\un}{\mathbb{I}}
\newcommand{\bayes}{s}
\newcommand{\loss}[1]{\ell(s,#1)}
\newcommand{\lossb}[1]{\ell\bigl(s,#1 \bigr)}
\newcommand{\xloss}[1]{\ell_X(#1)}
\newcommand{\cpred}{g}
\newcommand{\ctrain}{c}
\newcommand{\cvxct}{\rho}
\newcommand{\slope}{\nu}
\newcommand{\learnrule}{\mathcal{A}}
\newcommand{\ind}{j}
\newcommand{\ERM}[1]{\widehat{f}_{#1}}
\newcommand{\ERMho}[1]{\widehat{f}^{\,\mathrm{ho}}_{#1}}
\newcommand{\ERMag}[1]{\widehat{f}^{\,\mathrm{ag}}_{#1}}
\newcommand{\ERMcv}[1]{\widehat{f}^{\,\mathrm{cv}}_{#1}}
\newcommand{\ERMmv}[1]{\widehat{f}^{\,\mathrm{mv}}_{#1}}
\newcommand{\HO}[2]{\text{HO}_{#1}\left(#2\right)}
\newcommand{\numberthis}{\addtocounter{equation}{1}\tag{\theequation}}
\newenvironment{proof}{\par\noindent{\bf Proof\ }}{\qed}
\DeclareMathOperator*{\argmin}{argmin}
\DeclareMathOperator*{\argmax}{argmax}
\newtheorem{lemma}{Lemma}[section]
\newtheorem{Theorem}[lemma]{Theorem}
\newtheorem{corollary}[lemma]{Corollary}
\newtheorem{proposition}[lemma]{Proposition}
\newtheorem{Definition}[lemma]{Definition}
\newtheorem{claim}{Claim}[lemma]
\newtheorem{example}{Example}[section]
\newtheorem{Remark}{Remark}[section]
\title{Aggregated Hold-Out}
\author{Guillaume Maillard \\ \href{mailto:guillaume.maillard@u-psud.fr}{guillaume.maillard@u-psud.fr} \and Sylvain Arlot \\ \href{mailto:sylvain.arlot@u-psud.fr}{sylvain.arlot@u-psud.fr} \and Matthieu Lerasle \\ \href{mailto:matthieu.lerasle@u-psud.fr}{matthieu.lerasle@u-psud.fr}
\vspace*{.2cm}
\\ 
Laboratoire de Math\'ematiques d'Orsay, Univ. Paris-Sud, \\ 
CNRS, Universit\'e Paris-Saclay, 91405 Orsay, France. 
\vspace*{.2cm}
\\ 
Inria Saclay - Ile-de-France, B\^at. Turing, \\ 
Campus de l'Ecole Polytechnique, 91120 Palaiseau, France
}
\begin{document}

\maketitle

\begin{abstract}
Aggregated hold-out (Agghoo) is a method which averages learning rules selected by hold-out 
(that is, cross-validation with a single split). 
We provide the first theoretical guarantees on Agghoo, 
ensuring that it can be used safely: 
Agghoo performs at worst like the hold-out when the risk is convex. 
The same holds true in classification with the 0--1 risk, with an additional constant factor. 
For the hold-out, oracle inequalities are known for bounded losses, as in binary classification. 
We show that similar results can be proved, under appropriate assumptions, for other risk-minimization problems. 
In particular, we obtain an oracle inequality for regularized kernel regression with a Lipschitz loss, 
without requiring that the $Y$ variable or the regressors be bounded.
Numerical experiments show that aggregation brings a significant improvement over the hold-out and 
that Agghoo is competitive with cross-validation. 
\end{abstract}

{\bf Keywords:} cross-validation, aggregation, bagging, hyperparameter selection, regularized kernel regression

\section{Introduction}

%
The problem of choosing from data among a family of learning rules is central to machine learning. 
There is typically a variety of rules which can be applied to a given problem 
---for instance, support vector machines, neural networks or random forests. 
Moreover, most
machine learning rules depend on hyperparameters which have a strong impact on the final performance of the algorithm. 
For instance, $k$-nearest-neighbors rules  \cite{Bia_Dev:2015} 
depend on the number $k$ of neighbors. 
%
%
A second example, among many others, is given by regularized empirical risk minimization rules, 
such as support vector machines \cite{Ste_Chr:2008} or the Lasso \cite{Tib:1996,Buh_vdG:2011}, 
which all depend on some regularization parameter. 
A related problem is model selection \cite{Bur_And:2002,Mas:2003:St-Flour}, where one has to choose 
among a family of candidate models. 

%

%
In supervised learning, 
cross-validation (CV) is a general, efficient and classical answer to the problem 
of selecting a learning rule \cite{Arl_Cel:2010:surveyCV}. 
It relies on the idea of splitting data into a training sample 
---used for training a predictor with each rule in competition--- 
and a validation sample 
---used for assessing the performance of each predictor. 
This leads to an estimator of the risk 
---the hold-out estimator when data are split once, 
the CV estimator when an average is taken over several data splits---, 
which can be minimized for selecting among a family of competing rules.

%
A completely different strategy, called aggregation, 
is to \emph{combine} the predictors obtained with all candidates 
\cite{Nem:2000,Yan:2001,Tsy:2004}.  
Aggregation is the key step of ensemble methods \cite{Die:2000}, 
among which we can mention bagging \cite{Bre:1996a}, 
AdaBoost \cite{Fre_Sch:1997} 
and random forests \cite{Bre:2001,Bia_Sco:2016:TEST}. 
A major interest of aggregation is that it builds a learning rule 
that may not belong to the family of rules in competition. 
Therefore, it sometimes has a smaller risk than 
the best of all rules \cite[Table~1]{Salmon_Dal:2011}. In contrast, cross-validation, which selects only one candidate, cannot outperform the best rule in the family. 
%

\paragraph{Aggregated hold-out (Agghoo)}
%
%
This paper studies a procedure mixing cross-validation and aggregation ideas, that we
call \emph{aggregated hold-out} (Agghoo). 
Data are split several times; 
for each split, the hold-out selects one predictor; 
then, the predictors obtained with the different splits are aggregated. 
A formal definition is provided in Section~\ref{sec.definitions}. 
This procedure is as general as cross-validation 
and it has roughly the same computational cost (see Section~\ref{sec.comp}). 
Agghoo is already popular among practicioners, 
and has appeared in the neuro-imaging literature \cite{HoyosIdrobo2015,Varoquaux2017} 
under the name ``CV + averaging''. 
%
%
Yet, to the best of our knowledge, existing experimental studies do not give any indication on how to choose Agghoo's parameters.
No general mathematical definition has been provided, so it is unclear how to generalize Agghoo beyond a given article's setting. 
Theoretical guarantees on Agghoo have not been established yet, 
to the best of our knowledge. 
The closest results we found 
study other procedures, called ACV \cite{Jun_Hu:2015}, EKCV  \cite{Jun:2016}, 
or ``bagged cross-validation'' \cite{Hall2009}, 
and they do not prove oracle inequalities. 
We explain in Section~\ref{sec_def_agghoo} 
why Agghoo should be preferred to these procedures in the general prediction setting.

%
Because of the aggregation step, Agghoo is an ensemble method, and like bagging, it combines resampling with aggregation. The application of bagging to the hold-out was first suggested by Breiman \cite{Bre:1996a} as a way to combine pruning and bagging of CART trees. 
The combination of bagging and cross-validation has been studied numerically by \cite{Petersen2007}. A major difference with Agghoo is that the training and validation samples are not independent with bagging, which uses sampling \emph{with replacement}.
If the bootstrap is replaced by subsampling, bagging becomes subagging \cite{Buh_Yu:2002}, 
and its combination with cross-validation yields a procedure much closer to Agghoo, but still different, 
see Section~\ref{sec_def_agghoo}. 
Overall, previous results on bagging or subagging do not apply to Agghoo; new developments are required. 

\medbreak

\paragraph{Contributions} 
In this article, Agghoo's performance is studied both theoretically and experimentally.
We consider Agghoo from a prediction point of view. Performance is measured by a risk functional. 
On the theoretical side, the aim is to show that the risk of Agghoo's final predictor 
is as low as the risk of the optimal rule among the given collection. This is known as an oracle inequality. By a convexity argument, Agghoo always improves on the hold-out, provided that the risk is convex. Hence, Agghoo can safely replace the hold-out in any application where this hypothesis holds true. Another consequence is that oracle inequalities for Agghoo can be deduced from oracle inequalities for the hold-out.

This kind of result on the hold-out has already appeared in the literature: for example, Massart \cite[Corollary 8.8]{Mas:2003:St-Flour} proves a general theorem under an abstract noise assumption; more explicit results have been obtained in specific settings such as least-squares regression \cite[Theorem 7.1]{Gyrfi2002} or maximum-likelihood density estimation \cite[Theorem 8.9]{Mas:2003:St-Flour}. A review on cross-validation ---which includes the hold-out--- can be found in \cite{Arl_Cel:2010:surveyCV}. 

Most existing theoretical guarantees on the hold-out have a limitation: they assume that the loss function is uniformly bounded. In regression, the variable $Y$ and the regressors are also usually assumed to be bounded, which excludes some standard least-squares estimators. 
Even when the boundedness assumption holds true, constants arising from general bounds may be of the wrong order of magnitude, leading to vacuous results. 
By replacing uniform supremum bounds by local ones, we are able to relax these hypotheses in a general setting (Theorem~\ref{agcv_mean}). 
This enables us to prove an oracle inequality for the hold-out and Agghoo in regularized kernel regression  with a general Lipschitz loss (Theorem~\ref{rkhs_thm}). 
This oracle inequality allows for instance to recover state-of-the-art convergence rates in median regression without knowing the regularity of the regression function (adaptivity), both in the general case and, for small enough regularity, also in the specific setting of \cite{eberts2013}.
To illustrate the implications of Theorem~\ref{rkhs_thm}, we also apply it to $\varepsilon$-regression (Corollary~\ref{eps_reg}).
To the best of our knowledge, all these oracle inequalities are new, even for the hold-out.

A limitation of Agghoo is that it does not cover settings where averaging does not make sense,
such as classification. In classification with the 0--1 loss, the natural way to aggregate classifiers
is to take a majority vote among them. This yields a procedure which we call Majhoo.
Using existing theory for the hold-out in classification, we prove that Majhoo satisfies a general, 
margin-adaptive oracle inequality (Theorem~\ref{thm_classif}) under Tsybakov's margin assumption \cite{Mam_Tsy:1999}.

All our oracle inequalities are valid for any size of the aggregation ensemble.
Qualitatively, since bagging and subagging are well-known for their stabilizing effects \cite{Bre:1996a,Buh_Yu:2002},  
we can expect Agghoo to behave similarly.  
In particular, large ensembles should improve much the prediction performance of CV 
when the hold-out selected predictor is unstable.

For further insights into Agghoo and Majhoo, we conduct in Section~\ref{sec_simus} 
a numerical study on simulated datasets. 
Its results confirm our intuition: in all settings considered, Agghoo and Majhoo actually perform much better than the hold-out, 
and even better than CV, provided their parameters are well-chosen. 
When choosing the number of neighbors for $k$-nearest neighbors, 
the prediction performance of Majhoo is much better than the one of CV, 
which illustrates the strong interest of using Agghoo/Majhoo when 
learning rules are ``unstable''.
In support vector regression, Agghoo can even perform better than the oracle, an improvement made possible by aggregation, that cannot be matched by any hyperparameter selection rule.
Based upon our experiments, we also give in Section~\ref{sec_simus} 
some guidelines for choosing Agghoo's parameters: 
the training set size and the number of data splits.
\medbreak
The remaining of the article is structured as follows.
In Section $2$, we introduce the general statistical setting. In Section $3$, we give a formal definition of Agghoo. In Section $4$, we state the main theoretical results. In Section $5$, we present our numerical experiments and discuss the results. Finally, in Section $6$, we draw some qualitative conclusions about Agghoo. 
The proofs are postponed to the Appendix.

\section{Setting and Definitions}
We consider a general statistical learning setting, following the book by Massart \cite{Mas:2003:St-Flour}.
\subsection{Risk minimization} \label{risk_minim}
The goal is to minimize over a set $\parspace$ a risk functional $\risk: \parspace \rightarrow \mathbb{R} \cup \{ + \infty \}$.
The set $\parspace$ may be infinite dimensional for non-parametric problems.
Assume that $\risk$ attains its minimum over $\parspace$ at a point $\bayes$, called
a Bayes element. 
Then the \emph{excess risk} of any $t \in \mathbb{S}$ is the nonnegative quantity
\[ \loss{t} = \risk(t) - \risk(s) \enspace . \]
Suppose that the risk can be written as an expectation over an unknown probability distribution:
\[ \risk(t) = \mathbb{E}\bigl[ \cfun(t,\xi) \bigr] \enspace,\]
for a \emph{contrast function} $\cfun: \parspace \times \obspace \rightarrow \mathbb{R}$ and a random variable $\xi$ with values in some set $\obspace$
and unknown distribution $P$,
such that 
\[ \forall t \in \parspace, \qquad \widetilde{\xi} \in \Xi \mapsto \gamma(t,\widetilde{\xi}) \text{ is } P\text{-measurable}\enspace. \]
The statistical learning problem is to use data $D_n = \{ \xi_1,...,\xi_n \}$, where $\xi_1,...,\xi_n$ are independent and identically distributed (i.i.d.), with common distribution $P$,
to find an approximate minimizer for $\risk$.
The quality of this approximation is measured 
by the excess risk.

\subsection{Examples} 
\emph{Supervised learning} aims at predicting a quantity of interest $Y \in \mathcal{Y}$ using explanatory variables $X \in \cX$. 
The statistician observes pairs $(X_1,Y_1),\ldots (X_n,Y_n)$, so that $\obspace = \cX \times \cY$, and seeks 
a predictor in $\parspace = \{ t: \mathcal{X} \rightarrow \mathcal{Y}: \text{t measurable} \}$. 
The contrast function is defined by $\cfun(t,(x,y)) = g(t(x),y)$ for some \emph{loss function} $g: \cY \times \cY \rightarrow \mathbb{R}$.
Here, $g(y',y)$ measures the loss incurred by predicting $y'$ instead of the observed value $y$.
 Two classical supervised learning problems are classification and regression, which we detail below.

%

\begin{example}[Classification] \label{classif_pbm}
 In classification $Y$ belongs to a finite set of labels $\mathcal{Y} = \{0,\ldots,M \}$. We wish to correctly label any new data point $X$, and the risk is the probability of error\textup{:} 
 \[ \forall t \in \parspace, \qquad \risk(t) = \mathbb{P} \bigl( t(X) \neq Y \bigr) \enspace,\]
 which corresponds to the loss function $ g(y',y) = \mathbb{I}\{ y' \neq y \}$. Classification with convex losses \textup{(}such as the hinge loss or logistic loss\textup{)} can also be described using the formalism of Section~\ref{risk_minim}. 
\end{example}


\begin{example}[Regression] \label{def_reg}

 In regression we wish to predict a continuous variable $Y \in \mathcal{Y} = \mathbb{R}^d$.
 The error made by predicting $y'$ instead of $y$ is measured by the loss function defined by $g(y',y) = \phi(\Norm{y' - y})$
 where $\phi: \mathbb{R}_+ \rightarrow \mathbb{R}_+$ is nondecreasing and convex.
 Some typical choices are $\phi(x) = x^2$ \textup{(}least squares\textup{)}, $\phi(x) = x$ \textup{(}median regression\textup{)} or
 $\phi(x) = \left( |x| - \varepsilon \right)_+$ \textup{(}Vapnik's $\varepsilon$-insensitive loss, leading to $\varepsilon$-regression\textup{)}.
The risk is given by
\[ \risk(t) = \mathbb{E}\Bigl[\phi\bigl(\Norm{Y - t(X)}\bigr) \Bigr] \enspace.\]
If $\phi$ is strictly convex, the minimizer of $\risk$ over $\parspace$ is a unique function, up to modification on 
a set of probability $0$ under the distribution of $X$.
\end{example}

In some applications, such as robust regression, it is of interest to define $\bayes$ and $\loss{t}$ even when $\phi(\Norm{Y}) \notin L^1$.
This is possible for Lipschitz contrasts, by the following remark.

\begin{Remark}
 When $\phi$ is convex and increasing \textup{(}as in Example~\ref{def_reg}\textup{)}, and also Lipschitz-continuous, it is always possible to define
 \[ \bayes: x \mapsto \argmin_{u \in \mathbb{R}} \mathbb{E} 
 \bigl[ \phi(\Norm{Y-u}) - \phi(\Norm{Y}) \, \big\vert \, X = x \bigr] 
 \enspace . \]
 When $\bayes \in L^1(X)$, it is a Bayes element for the loss function $g(y',y) = \phi(\Norm{y' - y}) - \phi(\Norm{y})$.
Whenever $\phi(\Norm{Y}) \in L^1$, this loss yields the same Bayes element and excess risk as in Example 2.2.
\end{Remark}
 This small adjustment to the general definition allows to consider Example~\ref{def_reg} when $\phi(\Norm{Y - \bayes(X)})$ is not integrable, 
 for example when $Y = \bayes(X) + \eta$, where $\eta$ is independent from $X$ and follows a multivariate Cauchy distribution with location parameter $0$.
 \medbreak
 Some density estimation problems, such as maximum likelihood or least-squares density estimation, also fit the formalism of Section~\ref{risk_minim}, see \cite{Mas:2003:St-Flour}. 
 
\subsection{Learning rules and estimator ensembles} 
Statistical procedures use data to compute an element of $\parspace$ which approximately minimizes $\risk$.
Since Agghoo uses subsampling, we require learning rules to accept as input datasets of any size.
Therefore, we define a learning rule to be a function which maps any dataset to an element of $\parspace$.

\begin{Definition}
A \emph{dataset} $D_n$ of length $n$ is a finite i.i.d sequence $(\xi_i)_{1 \leq i \leq n}$ of $\obspace$-valued random variables with common distribution~$P$.

  A \emph{learning rule} $\learnrule$ is a measurable function\footnote{For any $n$,
  \begin{equation*}
   \begin{cases}
    \obspace^n \times \obspace &\rightarrow \mathbb{R} \\
    (\xi_{1:n}, \xi) &\mapsto \cfun(\learnrule(\xi_{1:n}),\xi)
   \end{cases}
  \end{equation*}
is assumed to be measurable (with respect to the product $\sigma$-algebra on $\obspace^{n+1}$).}
  \[\learnrule: \bigcup_{n = 1}^{\infty} \obspace^n \rightarrow \parspace \enspace . \]
\end{Definition}

In the risk minimization setting,
$\learnrule$ should be chosen so as to minimize $\risk(\learnrule(D_n))$.
\medbreak
A generic situation is when a family $(\learnrule_m)_{m \in \cM}$ of learning rules is given, 
so that we have to select one of them (estimator selection), or to combine their outputs (estimator aggregation). 
For instance, when $\cX$ is a metric space, we can consider the family 
$(\learnrule_k^{\mathrm{NN}})_{k \geq 1}$ of nearest-neighbors classifiers 
---where $k$ is the number of neighbors---, 
or, for a given kernel on $\cX$, the family $(\learnrule_{\lambda}^{\mathrm{SVM}})_{\lambda \in [0,+\infty)}$ 
of support vector machine classifiers 
---where $\lambda$ is the regularization parameter. 
Not all rules in such families perform well on a given dataset. Bad rules should be avoided when selecting the hyperparameter, or be given small weights if the outputs are combined in a weighted average. 
 This requires a data-adaptive procedure, as the right choice of rule in general depends on the unknown distribution $P$.
\medbreak
Aggregation and parameter selection methods aim to resolve this problem,
as described in the next section.

%
\section{Cross-Validation and Aggregated Hold-Out (Agghoo)} \label{sec.definitions}
This section recalls the definition of cross-validation for estimator selection,
and introduces a new procedure called aggregated hold-out (Agghoo). For more details
and references on cross-validation, we refer the reader to the survey by Arlot and Celisse \cite{Arl_Cel:2010:surveyCV}. 
%
\subsection{Background: cross-validation}
Cross-validation uses subsampling and the empirical risk. We introduce first some notation. 
\begin{Definition}[Empirical risk]
For any dataset $D_n = (\xi_i)_{1 \leq i \leq n}$ and any $t \in \parspace$, the empirical risk
of $t$ over $D_n$ is defined by
 \[ P_n \gamma (t,\cdot) = \frac{1}{n} \sum_{i = 1}^n \gamma(t,\xi_i)\enspace. \]
For any nonempty subset $T \subset \{ 1,\ldots,n \}$, let also
\[ D_n^T = (\xi_i)_{i \in T} \]
be the subsample of $D_n$ indexed by $T$, and define the associated empirical risk by
 \[ \forall t \in \parspace, \qquad P_n^T \gamma(t,\cdot) = \frac{1}{|T|} \sum_{i \in T} \gamma(t,\xi_i) \enspace.\]
\end{Definition}
The most classical estimator selection procedure is to \emph{hold out} some data to calculate the empirical risk of each
estimator, and then select the estimator with the lowest empirical risk. This ensures that the data used to evaluate the risk are independent
from the training data used to compute the learning rules.
\begin{Definition}[Hold-out] \label{def_ho}
For any dataset $D_n$ and any subset $T \subset \{ 1,\ldots,n \}$,
 the associated hold-out risk estimator of a learning rule $\learnrule$ is defined by
 \[ \HO{T}{\learnrule, D_n} = P_n^{T^c} \gamma \left( \learnrule(D_n^T),\cdot \right) \enspace.\]
 Given a collection of learning rules $(\learnrule_m)_{m \in \cM}$, the hold-out procedure selects 
 \[ \widehat{m}_{T}^{ho}(D_n) \in \argmin_{m \in \cM} \HO{T}{\learnrule_m,D_n} \enspace, \]
 measurably with respect to $D_n$.
 The overall learning rule is then given by
 \[ \ERMho{T} \bigl( (\learnrule_m)_{m \in \cM}, D_n \bigr) = \learnrule_{\widehat{m}_{T}^{ho}(D_n)}(D_n^T) \enspace.\]
\end{Definition}
 
Hold-out depends on the arbitrary choice of a training set $T$, 
and is known to be quite unstable, 
despite its good theoretical properties \cite[Section~8.5.1]{Mas:2003:St-Flour}. 
%
Therefore, practicioners often prefer to use cross-validation instead, 
which considers several training sets.

 
\begin{Definition}[Cross-validation] \label{def_cv}
Let $D_n$ denote a dataset. Let $\cT$ denote a collection of nonempty subsets of $\{ 1, \ldots, n \}$.
The associated cross-validation risk estimator of a learning rule $\learnrule$ is defined by
\[ CV_{\cT}(\learnrule, D_n) = \frac{1}{\lvert \cT \rvert} \sum_{T \in \cT} \HO{T}{\learnrule,D_n}. \]
The cross-validation procedure then selects
\[ \widehat{m}_{\cT}^{cv}(D_n) \in \argmin_{m \in \cM} CV_{\cT}(\learnrule_m,D_n)\enspace.  \]
The final predictor obtained through this procedure is
\[ \ERMcv{\cT}\bigl( (\learnrule_m)_{m \in \cM}, D_n \bigr)  = \learnrule_{\widehat{m}_{\cT}^{cv}(D_n)}(D_n) \enspace.\]
\end{Definition}

Depending on how $\cT$ is chosen, this can lead to 
leave-one-out, leave-$p$-out, $V$-fold cross-validation 
or Monte-Carlo cross-validation, among others \cite{Arl_Cel:2010:surveyCV}. 
In the following, we omit some of the arguments $\learnrule, D_n$ which
appear in Definitions \ref{def_ho} and~\ref{def_cv}, when they are clear from context. For example, we often write $\HO{T}{\learnrule}, \widehat{m}_T^{ho}, \ERMho{T}$ instead of $\HO{T}{\learnrule,D_n}, \widehat{m}_T^{ho}(D_n), \ERMho{T} \bigl( (\learnrule_m)_{m \in \cM}, D_n \bigr)$ (respectively).

\subsection{Aggregated hold-out (Agghoo) estimators} \label{sec_def_agghoo}
In this paper, we study another way to improve on the stability of hold-out selection, 
by \emph{aggregating} the predictors $\ERMho{T}$ obtained by the hold-out procedure applied repeatedly with different training sets $T \in \cT$. 
When $\parspace$ is convex (e.g., regression), \emph{aggregated hold-out} 
(Agghoo) consists in averaging them.

\begin{Definition}[Agghoo] \label{agcv_def} 
 Assume that $\parspace$ is a convex set.
 Let $(\learnrule_m)_{m \in \cM}$ denote a collection of learning rules,
 $D_n$ a dataset, and $\cT$ a collection of subsets of $\{1,\ldots,n\}$.
Using the notation of Definition~\ref{def_ho}, the associated Agghoo estimator is defined by
\[ \ERMag{\cT} \bigl( (\learnrule_m)_{m \in \cM}, D_n \bigr) = \frac{1}{\lvert \cT \rvert} \sum_{T \in \cT} \ERMho{T} \bigl( (\learnrule_m)_{m \in \cM}, D_n \bigr) \enspace. \]
\end{Definition}

In the classification framework, as seen in Example~\ref{classif_pbm},
$\parspace = \{ f : \mathcal{X} \rightarrow \{ 0,\ldots,M \} \}$
which is not convex. 
However, there is still a natural way to aggregate several classifiers,
by taking a majority vote.
\begin{Definition}[Majhoo] \label{agcv_def_classif} 
 Let $\cY = \{ 0,\ldots,M \} $ be the set of labels.
 Given a collection of learning rules $(\learnrule_m)_{m \in \cM}$,
 a dataset $D_n$ and a collection $\cT$ of subsets of $\{1,\ldots,n\}$, the majority hold-out \textup{(}Majhoo\textup{)}  classifier is 
 any measurable $\ERMmv{\cT}\bigl( (\learnrule_m)_{m \in \cM}, D_n \bigr): \cX \rightarrow \cY$ such that, using the notation $\ERMho{T}$ introduced in Definition~\ref{def_ho}, for all $x \in \cX$,
\[\ERMmv{\cT}\bigl( (\learnrule_m)_{m \in \cM}, D_n \bigr)(x) \in \argmax_{j \in \cY} \Bigl\lvert \Bigl\{ T \in \cT \,\big\vert\, \ERMho{T}\bigl( (\learnrule_m)_{m \in \cM}, D_n \bigr)(x) = j \Bigr\} \Bigr\rvert \enspace .  \]
\end{Definition}
In most situations, it is clear how hold-out rules should be aggregated and there is no ambiguity in discussing hold-out aggregation. However, there is an important exception where both Agghoo and Majhoo can be used.

\begin{Remark}[Two options for binary classification]
\label{rk_surr_loss}
 In binary classification \textup{(}Example~\ref{classif_pbm} with $M = 2$\textup{)}, it is classical to consider classifiers of the form $\mathbb{I}_{f \geq 0}$ where $f \in \parspace_{conv} = \left\{f: \cX \rightarrow \mathbb{R} \right\}$ aims at minimizing a surrogate convex risk associated with the loss 
 $\cpred_{conv}: (y',y) \mapsto \phi [(2y'-1)(2y-1) ]$ with $\phi: \mathbb{R} \rightarrow \mathbb{R}$ convex \cite{Boucheron:2005}.
 Then, given a family of $\parspace_{conv}$-valued learning rules $\bigl(\learnrule_m \bigr)_{m \in \cM}$, one can either apply Agghoo to the surrogate problem and get
 \[ \mathbb{I}_{\ERMag{\cT}\left(\left(\learnrule_m \right)_{m \in \cM}, D_n \right) \geq 0} \enspace , \]
 or apply Majhoo to the binary classification problem and get 
 \[\ERMmv{\cT} \left(\bigl(\mathbb{I}_{\learnrule_m(\cdot) \geq 0} \bigr)_{m \in \cM}, D_n \right) \enspace . \]
\end{Remark}
In the rest of this section, we focus on Agghoo, though much of the following discussion applies also to Majhoo.
\medbreak
Compared to cross-validation rules (Definition~\ref{def_cv}), Agghoo reverses the order between aggregation (majority vote or averaging) and minimization of the risk estimator: instead of averaging hold-out risk estimators before selecting the hyperparameter, the selection step is made first to produce hold-out predictors $\bigl(\ERMho{T} \bigr)_{T \in \cT}$ (given by Definition~\ref{def_ho}) and then an average is taken.

\paragraph{Related procedures} 
%
%
To the best of our knowledge, Agghoo has not been studied theoretically before, 
though it is used in applications \cite{HoyosIdrobo2015,Varoquaux2017}, 
under the name ``CV + averaging'' in \cite{Varoquaux2017}.  
According to \cite{Varoquaux2017}, Agghoo is 
commonly used by the machine learning community 
thanks to the Scikit-learn library \cite{Ped_etal:2011}. 

%
%
A closely related procedure is 
``$K$-fold averaging cross-validation'' (ACV), 
proposed by \cite{Jun_Hu:2015} for linear regression. 
With our general notation, ACV corresponds to averaging 
the $\learnrule_{\widehat{m}_{ho}^T}(D_n)$, which are ``retrained'' on the whole dataset, 
while Agghoo averages the $\learnrule_{\widehat{m}_{ho}^T}(D_n^T)$. 
An advantage of
averaging the rules $\learnrule_{\widehat{m}_{ho}^T}(D_n^T)$ is that they have been selected for their good performance on the validation set $T^c$, unlike the 
$\learnrule_{\widehat{m}_{ho}^T}(D_n)$ whose performance has not been assessed on independent data.
Furthermore, similarly to bagging, using several distinct training sets may result in improvements for unstable methods through a reduction in variance.
Note finally that the theoretical results of \cite{Jun_Hu:2015} on ACV 
are limited to a specific setting, 
and much weaker than an oracle inequality. 

\medbreak

%
A second family of related procedures is averaging 
the chosen \emph{parameters} $\bigl(\widehat{m}_T^{ho}\bigr)_{T \in \cT}$, 
contrary to Agghoo which averages the chosen \emph{prediction rules}. 
This leads to different procedures for learning rules that are not linear functions 
of their parameters. 
This idea has been put forward  
under the name ``bagged cross-validation'' (BCV) \cite{Hall2009} 
---with numerical and theoretical results in the case of 
bandwidth choice in kernel density estimation---, 
and under the name ``efficient $K$-fold cross-validation'' (EKCV) 
\cite{Jun:2016} for the choice of a regularization parameter in 
high-dimensional regression 
---with numerical results only. 
%
Unlike Agghoo, which only depends on the set $\{ \learnrule_m \,\vert\, m \in \cM \}$ of learning rules, EKCV and BCV depend on the parametrization $m \mapsto \learnrule_m$. 
Sometimes, the most natural parametrization does not allow the use of such procedures: 
for example, model dimensions are integers, and averaging them does not make sense.
In contrast, in regression, it is always possible to average the real-valued functions $ \learnrule_m(D_{n_t}) \in \parspace$. 

Even when all procedures are applicable, averaging rules is generally safer than averaging hyperparameters. 
Often in regression, the risk $\risk$ is known to be convex over $\parspace$, so given $t_1,\ldots,t_V \in \parspace$,
\[ \risk \left( \frac{1}{V} \sum_{i = 1} t_i \right) \leq \frac{1}{V} \sum_{i = 1}^V \risk(t_i) \enspace . \]
Hence, averaging regressors (Agghoo) always improves performance compared to selecting a single $t_i$ at random (hold-out).
On the other hand, if $ (t_\theta)_{\theta \in \Theta}$ is a family of elements of $\parspace$ parametrized by a convex set $\Theta$, there is no guarantee in general that the function $\theta \mapsto \risk(t_\theta)$ is convex over $\Theta$. So, for some $\theta_1,\ldots, \theta_V \in \Theta$, it may happen that
\[\risk \left( t_{\frac{1}{V}\sum_{i=1}^V \theta_i} \right) \geq \frac{1}{V} \sum_{i = 1}^V \risk(t_{\theta_i})\enspace . \]
In such a case, it is better to choose one parameter at random (hold-out) that to average them (EKCV or BCV).

\medbreak

%
A third family of related procedures is bagging or subagging applied to hold-out selection 
$D_n \mapsto \ERMho{T}((\learnrule_m)_{m \in \cM}, D_n)$. 
The bagging case has been studied numerically by \cite{Petersen2007}, 
but clearly differs from Agghoo since it relies on bootstrap resamples, 
in which the original data can appear several times. 
Subagging ---which is not explicitly studied in the literature, 
to the best of our knowledge--- is closer to Agghoo, but there is still a slight difference. 
When applying subagging to the hold-out, the sample is divided into three parts: the training part of the bagging subsample, the validation part of the bagging subsample, and the data not in the bagging subsample. With Agghoo, the sample is only divided into two parts.

\subsection{Computational complexity} \label{sec.comp}
In general, for a given value of $V = \lvert \cT \rvert$, both Agghoo ($\ERMag{\cT}$) and CV ($\ERMcv{\cT}$) must compute $V$ hold-out risk estimators over all values of $m \in \cM$. Let $C_{ho}(\cM,n_t,n_v)$ be the average computational complexity of the hold-out, with a training dataset of size $n_t$ and validation dataset of size $n_v$. Then the overall complexity of risk estimation is of order $V \times C_{ho}(\cM,n_t,n_v)$ for both Agghoo and CV. Next, CV must average $V$ risk vectors of length $\lvert \cM \rvert$ and find a single minimum, while Agghoo computes $V$ minima over $m \in \cM$; these operations have similar complexity, of order 
$V \times \lvert \cM \rvert$.
Thus, computing the ensemble aggregated by Agghoo takes about as much time as selecting a learning rule using cross-validation.   
 
A potential difference occurs when evaluating Agghoo and CV on new data.
If there is no fast way to perform aggregation at training time, it is always possible to evaluate each predictor in the ensemble on the new data, and to average the results; then, Agghoo is slower than CV by a factor of order $V$ at test time.

\section{Theoretical results} \label{sec_thms}
The purpose of Agghoo is to construct an estimator whose risk is as small as possible, compared to the (unknown) best rule in the class $(\learnrule_m)_{m \in \cM}$.
This is guaranteed theoretically by proving ``oracle inequalities'' of the form
\begin{equation} \label{or_ineq}
 \mathbb{E}\bigl[ \loss{\ERMag{\cT}} \bigr] \leq C \mathbb{E} \Bigl[ \inf_{m \in \cM} \lossb{\learnrule_m(D_{n})} \Bigr] + \varepsilon_n \enspace ,
\end{equation}
with 
$\varepsilon_n$ negligible compared to the oracle excess risk $\mathbb{E}[\inf_{m \in \cM} \loss{\learnrule_m(D_{n_t})} ]$ 
and $C$ close to $1$. 
Equation~\eqref{or_ineq} then implies that Agghoo performs as well as the best choice of $m \in \cM$, up to the constant $C$.
In the following, we actually prove slightly weaker inequalities that are more natural in our setting.

By definition, Agghoo is an average of predictors chosen by hold-out over the collection $(\learnrule_m)_{m \in \cM}$ . Therefore, when the risk is convex, an oracle inequality \eqref{or_ineq}  can be deduced from an oracle inequality for the hold-out, provided that there exists an integer $n_t \in \{1,\ldots, n-1 \}$ such that
\begin{equation} \label{hyp.T}
 \cT \text{ is independent from } D_n \qquad \text{and} \qquad \forall T \in \cT, \quad |T| = n_t  \enspace .
\end{equation} 
We make this assumption in the rest of the article.
Most cross-validation methods satisfy hypothesis \eqref{hyp.T}, 
including leave-$p$-out, $V$-fold cross-validation (with $n - n_t = n_v =n/V$) and 
Monte-Carlo cross-validation \cite{Arl_Cel:2010:surveyCV}.

In the remainder of this section, we introduce the RKHS setting of interest, 
and prove an oracle inequality for Agghoo without changing the standard estimators or requiring $Y$ to be bounded.

\subsection{Agghoo in regularized kernel regression} \label{sec_rkhs}

Kernel methods such as support vector machines, kernel least squares or $\varepsilon$-regression use a kernel function to map the data $X_i$ into an infinite-dimensional function space, more specifically a reproducing kernel Hilbert space (RKHS) \cite{Sch_Smo:2001, Ste_Chr:2008}. We consider in this section regularized empirical risk minimization using a training loss function $c$, with a penalty proportional to the square norm of the RKHS, to solve the supervised learning problem (defined in Section $2.2$) with loss function $\cpred$. Hence, the contrast $\cfun$ can be written $\cfun(t,(x,y)) = \cpred(t(x),y) := (\cpred \circ t) (x,y)$. We assume that $\cpred$ and $\ctrain$ are convex in their first argument.
 
\begin{Definition}[Regularized kernel estimator]  \label{def_kern} 
 Let $\ctrain: \mathbb{R} \times \mathbb{R} \rightarrow \mathbb{R} $ be convex in its first argument, and let $K: \cX \times \cX \rightarrow \mathbb{R}$ be a positive-definite kernel function.
 Given $\lambda > 0$ and training data $(X_i,Y_i)_{1 \leq i \leq n_t}$,
 define the regularized kernel estimator as
 \[ \learnrule_\lambda(D_{n_t}) = \argmin_{t \in \mathcal{H}} \left\{ P_{n_t} (\ctrain\circ t) + \lambda \hNorm{t}^2 \right\} \enspace  , \]
 where $\mathcal{H}$ is the reproducing kernel Hilbert space induced by $K$. 
By the representer theorem, $\learnrule_\lambda$ can be computed explicitly\textup{:} 
 \begin{gather} \notag 
  \learnrule_\lambda (D_{n_t})(x) = \sum_{j = 1}^{n_t} \widehat{\theta}_{\lambda,j} K(X_j,x) \qquad \text{ where } \\
  \widehat{\theta}_{\lambda} = \argmin_{\theta \in \mathbb{R}^{n_t}} \left\{ \frac{1}{n_t} \sum_{i = 1}^{n_t} \ctrain\left(\sum_{j = 1}^{n_t} \theta_j K(X_j,X_i),Y_i \right) + \lambda \sum_{i = 1}^{n_t} \sum_{j = 1}^{n_t} \theta_i \theta_j K(X_i,X_j) \right\} \enspace . 
  \label{calc_rkhs_reg}
 \end{gather}
\end{Definition}
The loss function $\ctrain$ is used to measure the accuracy of the fit on the training data: for example, taking $\ctrain: (u,y) \mapsto (1-uy)_+$ (the hinge loss) in Definition~\ref{def_kern} corresponds to svm. The loss function $\cpred$ used for risk evaluation may or may not be equal to $\ctrain$. For example, in classification, the 0--1 loss often cannot be used for training for computational reasons, hence a surrogate convex loss, such as the hinge loss, is used instead (see Remark~\ref{rk_surr_loss}), but there is no reason to use the hinge loss for risk estimation and hyperparameter selection.

\medbreak

In Definition~\ref{def_kern}, the hyperparameter of interest is $\lambda$ (we assume that $K$ is fixed).
We show below some guarantees on Agghoo's performance when it is applied to a finite subfamily $\left( \learnrule_\lambda \right)_{\lambda \in \Lambda}$ of the one defined by Definition~\ref{def_kern}.
We first state some useful assumptions.
\\ \\ 
Hypothesis $Comp_C(\cpred,c)$: $\risk_{\ctrain}: t \mapsto P(\ctrain\circ t)$ and $\risk_{\cpred}$ have a common minimum $s \in \argmin_{t \in \parspace} \risk_{\ctrain}(t) \cap \argmin_{t \in \parspace} \risk_{\cpred}(t)$ and for any $t \in \parspace$,
$ \risk_{\ctrain}(t) - \risk_{\ctrain}(s) \leq C\left[ \risk_{\cpred}(t) - \risk_{\cpred}(s)\right]$.
\\ \\
Note that $Comp_1(\cpred,c)$ is always satisfied when $\cpred = c$. When $g \neq c$, some hypothesis relating $c$ and $g$ is necessary anyway for Definition~\ref{def_kern} to be of interest, if only to ensure consistency (asymptotic minimization of the risk)  for some sequence of hyperparameters $(\lambda_n)_{n \in \mathbb{N}}$.

\medbreak

In addition, some information about the evaluation loss $\cpred$ helps to obtain an oracle inequality \eqref{or_ineq} with a smaller remainder term $\varepsilon_n$.    
\\ \\
Hypothesis $SC_{\cvxct,\slope}$: Let $\xloss{u} = \mathbb{E}[\cpred(u,Y) | X] - \inf_{v \in \mathbb{R}} \mathbb{E}[\cpred(v,Y) | X]$.
The triple $(\cpred, X,Y)$ satisfies $SC_{\cvxct,\slope}$ if and only if, for any $u,v \in \mathbb{R}$,
\begin{equation} 
\label{hyp.SC} 
 \mathbb{E}[(\cpred(u,Y) - \cpred(v,Y))^2 | X] \leq \bigl[ \cvxct \vee (\slope |u-v|) \bigr] [\xloss{u} + \xloss{v}]. 
\end{equation}
\\ 
\\
For example, in the case of median regression, that is, $g(u,y) = |u-y|$, hypothesis $SC_{\rho,\nu}$ holds whenever there is a uniform lower bound on the concentration of $Y$ around $\bayes(X)$, as shown by the following proposition.

\begin{proposition} \label{prop_sc}
 Let $\cpred(u,y) = |u-y|$ for all $u,y \in \mathbb{R}$. For any $x \in \cX$, let $F_x$ be the conditional cumulative distribution function of $Y$ knowing $X = x$. 
 Assume that, for any $x \in \cX$, $F_x$ is continuous with a unique median $\bayes(x)$ and that  there exists 
 $a(x) > 0, b(x) > 0$ such that
 \begin{equation} \label{hyp_loc_incr_F}
 \forall u \in \mathbb{R}, \qquad 
 \Bigl\lvert F_x(u) - F_x \bigl( \bayes(x) \bigr) \Bigr\rvert \geq a(x) \Bigl[ \bigl\lvert u-\bayes(x) \bigr\rvert \wedge b(x) \Bigr] \enspace .
 \end{equation}
For instance, this holds true if $\frac{dF_x}{du} \geq a(x) \mathbb{I}_{|u-\bayes(x)| \leq b(x)}$ for every $x \in \cX$. Let
\begin{align*}
 a_m = \inf_{x \in \cX} \bigl\{ a(x) \bigr\} 
\qquad \text{and} \qquad 
\mu_m = \inf_{x \in \cX} \bigl\{a(x)b(x) \bigr\} 
\enspace  .
\end{align*}
If $a_m > 0$ and $\mu_m > 0$, then $(\cpred,X,Y)$ satisfies $SC_{\frac{4}{a_m}, \frac{2}{\mu_m}}$.
\end{proposition}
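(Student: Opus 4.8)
The plan is to condition on $X$ and reduce the claim to an elementary but careful real-variable estimate. Fix a value $x\in\cX$ and write $F=F_x$, $m=\bayes(x)$, $a=a(x)$, $b=b(x)$. It suffices to prove that, for all $u,v\in\mathbb{R}$, on the event $\{X=x\}$,
\[
 \mathbb{E}\bigl[(\cpred(u,Y)-\cpred(v,Y))^2 \,\big|\, X\bigr] \;\leq\; \Bigl[\tfrac{4}{a}\vee\tfrac{2\lvert u-v\rvert}{ab}\Bigr]\bigl(\xloss{u}+\xloss{v}\bigr) \enspace .
\]
Indeed $a\geq a_m$ and $ab\geq\mu_m$, so the right-hand side above is at most $[\cvxct\vee(\slope\lvert u-v\rvert)](\xloss{u}+\xloss{v})$ with $\cvxct=4/a_m$ and $\slope=2/\mu_m$, which is precisely the inequality \eqref{hyp.SC} defining $SC_{\cvxct,\slope}$. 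By symmetry in $(u,v)$ one may assume $u\leq v$.

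Two ingredients feed into this. First, since $\lvert u-y\rvert-\lvert m-y\rvert=\int_m^u\operatorname{sign}(t-y)\,dt$, Fubini's theorem---legitimate because the integrand is bounded and the interval finite, so that no integrability of $Y$ is needed and $\xloss{\cdot}$ is understood through the adjusted loss of the remark following Example~\ref{def_reg}---gives
\[
 \xloss{u} \;=\; \int_m^u\bigl(2F(t)-1\bigr)\,dt \;=\; 2\,\Bigl\lvert\int_m^u\bigl(F(t)-F(m)\bigr)\,dt\Bigr\rvert \enspace .
\]
Inserting the lower bound \eqref{hyp_loc_incr_F} on $\lvert F(t)-F(m)\rvert$ and splitting the integral at $\lvert t-m\rvert=b$ then yields the key estimate $\xloss{u}\geq a\bigl[(u-m)^2\wedge(b\lvert u-m\rvert)\bigr]$. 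Second, by the reverse triangle inequality $\lvert\cpred(u,Y)-\cpred(v,Y)\rvert=\bigl\lvert\lvert u-Y\rvert-\lvert v-Y\rvert\bigr\rvert\leq\lvert u-v\rvert$ pointwise, so $\mathbb{E}[(\cpred(u,Y)-\cpred(v,Y))^2\mid X]\leq(u-v)^2$.

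It remains to combine these two bounds, according to whether the median $m$ lies outside or inside $[u,v]$. If $m\notin(u,v)$, the one of $u,v$ farther from $m$ is at distance at least $v-u$, hence $\xloss{u}+\xloss{v}\geq a[(v-u)^2\wedge b(v-u)]$ and the inequality follows at once (with room to spare). If $u\leq m\leq v$, put $p=m-u\geq0$, $q=v-m\geq0$, so that $v-u=p+q$ and $\xloss{u}+\xloss{v}\geq a[(p^2\wedge bp)+(q^2\wedge bq)]$; one then verifies $(p+q)^2\leq[\cvxct\vee\slope(p+q)]\,a[(p^2\wedge bp)+(q^2\wedge bq)]$ by treating separately the cases $p,q\leq b$ (use $p^2+q^2\geq(p+q)^2/2$), $p,q>b$ (use $bp+bq=b(p+q)$), and the mixed case $p\leq b<q$ together with its mirror, always using $[\cvxct\vee\slope(p+q)]\geq\max(\cvxct,\slope q)$ and $\cvxct a\geq4$, $\slope ab\geq2$.

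The only delicate point is the mixed case $p\leq b<q$: there the crude bounds $(p+q)^2\leq4q^2$ and $\xloss{v}\geq abq$ alone would require $\slope\geq4/(ab)$, which is too strong since $\slope=2/\mu_m\leq2/(ab)$. The remedy is to keep both lower bounds $\xloss{u}\geq ap^2$ and $\xloss{v}\geq abq$ and to split $(p+q)^2=p^2+(2pq+q^2)$: bound $p^2$ by $[\cvxct\vee\slope(p+q)]\,a\,p^2$, and bound $2pq+q^2=q(2p+q)\leq q(2b+q)$ (since $p\leq b$) by $q\cdot[\cvxct\vee\slope(p+q)]\,ab$, using that $[\cvxct\vee\slope(p+q)]\,ab\geq\max(\cvxct ab,\,\slope q\,ab)\geq\max(4b,2q)\geq2b+q$. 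This is also where the factor $4$ in $\cvxct=4/a_m$ (as opposed to $2/a_m$) is genuinely needed.
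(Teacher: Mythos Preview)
Your proof is correct and follows the same strategy as the paper: bound the conditional second moment by $(u-v)^2$ via the $1$-Lipschitz property, lower-bound the excess risk $\xloss{u}$ by integrating the cdf-gap hypothesis~\eqref{hyp_loc_incr_F}, and then compare $(u-v)^2$ to $\xloss{u}+\xloss{v}$ by a case analysis on the positions of $u,v$ relative to the conditional median.

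The only difference is organizational. The paper packages the comparison step as a stand-alone lemma on differentiable convex functions (Lemma~\ref{lem_min_cvx}): from $|R_x'(u)|\geq 2a(x)|u-\bayes(x)|$ on $[\bayes(x)-b(x),\bayes(x)+b(x)]$ it deduces $(u-v)^2\leq\bigl[\tfrac{4}{\varepsilon}\vee\tfrac{4}{\varepsilon\delta}|u-v|\bigr]\bigl[R_x(u)+R_x(v)-2R_x(\bayes(x))\bigr]$ with a clean two-case split according to whether the point \emph{farther} from $\bayes(x)$ lies inside or outside the $\delta$-neighborhood. You instead split first on whether $m\in[u,v]$, and then, in the ``between'' case, into the four sub-cases $(p\lessgtr b,\,q\lessgtr b)$; the mixed case then forces the delicate splitting $(p+q)^2=p^2+q(2p+q)$ that you handle. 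The paper's two-case route is shorter, and in fact applied with $\varepsilon=2a(x)$ it yields the slightly sharper $SC_{2/a_m,\,2/\mu_m}$; your argument genuinely needs $\cvxct a\geq 4$ in the mixed case, matching the stated $\cvxct=4/a_m$ exactly.
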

Proposition~\ref{prop_sc} is proved in Appendix~\ref{sec.proof_sc}.
We can now state our first main result.

\begin{Theorem} \label{rkhs_thm}
  Let $\Lambda \subset \mathbb{R_+^*}$ be a finite grid. Using the notation of Definition~\ref{agcv_def}, let $\ERMag{\cT}$ be the output of Agghoo, applied to the collection $(\learnrule_\lambda)_{\lambda \in \Lambda}$ given by Definition~\ref{def_kern}. 
 Assume that $\lambda_m = \min \Lambda > 0$ and $\kappa = \sup_{x \in \cX} K(x,x) < +\infty$.
 Assume that $Comp_C(\cpred,\ctrain)$ holds for a constant $C > 0$ and that $(\cpred,X,Y)$ satisfies $SC_{\cvxct,\slope}$ with constants $\rho \geq 0, \nu \geq 0$.
 Assume  that $\ctrain$ and $\cpred$ are convex and Lipschitz in their first argument, with Lipschitz constant less than $L$. Assume also that $n_v \geq 100$ and $3 \leq \lvert \Lambda \rvert \leq \mathrm{e}^{\sqrt{n_v}}$.
Then, for any $\theta \in (0;1]$,
\begin{equation} 
\label{rkhs_thm_eq2}
\begin{split}
  (1 - \theta ) \mathbb{E}\Bigl[\ell \bigl(s, \ERMag{\cT} \bigr) \Bigr] 
 &\leq ( 1+\theta ) \mathbb{E}\Bigl[\min_{\lambda \in \Lambda} \ell \bigl(s,\learnrule_\lambda(D_{n_t}) \bigr) \Bigr]
 \\ 
 &\hspace*{-1cm}+ \max \left\{18 \cvxct \frac{\log\bigl(n_v \lvert \Lambda \rvert\bigr)}{\theta n_v}, b_1 \frac{\log^2 \bigl(n_v\lvert \Lambda \rvert\bigr)}{\theta^3 \lambda_m n_v^2}, b_2 \frac{\log^{\frac{3}{2}}\bigl(n_v\lvert \Lambda \rvert\bigr)}{\theta \lambda_m n_v \sqrt{n_t}} \right\} \enspace , 
\end{split}
\end{equation}
where $b_1,b_2$ do not depend on $n_v,n_t,\lambda_m$ or $\theta$ but only on $\kappa,L,\slope$ and $C$.
\end{Theorem}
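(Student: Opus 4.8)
The plan is to deduce the statement from a general oracle inequality for the hold-out (Theorem~\ref{agcv_mean}), after two preliminary reductions: a convexity argument passing from Agghoo to the hold-out, and a \emph{deterministic} $\NormInfinity{\cdot}$-bound on the regularized kernel estimators that plays the role usually played by boundedness of $Y$. For the first reduction, note that since $\cpred$ is convex in its first argument, $t\mapsto\risk_{\cpred}(t)=\bE[\cpred(t(X),Y)]$ is convex on the convex set $\parspace$, hence the excess risk $t\mapsto\loss{t}=\risk_{\cpred}(t)-\risk_{\cpred}(\bayes)$ is convex. Applying Jensen's inequality to Definition~\ref{agcv_def} gives, for every realization of the data, $\loss{\ERMag{\cT}}\leq\lvert\cT\rvert^{-1}\sum_{T\in\cT}\loss{\ERMho{T}}$. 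Taking expectations and using \eqref{hyp.T} --- so that for each $T\in\cT$ the pair $(D_n^T,D_n^{T^c})$ has the same distribution as two independent samples of respective sizes $n_t$ and $n_v=n-n_t$, and in particular the law of $\loss{\ERMho{T}}$ does not depend on $T$ --- one gets $\bE[\loss{\ERMag{\cT}}]\leq\bE[\loss{\ERMho{T}}]$ for an arbitrary fixed $T\in\cT$. It then suffices to prove \eqref{rkhs_thm_eq2} with $\ERMho{T}$ in place of $\ERMag{\cT}$ and with $D_{n_t}$ the corresponding training subsample.

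Second, I would show that $\NormInfinity{\learnrule_\lambda(D_{n_t})}\leq L\kappa/\lambda_m$ for every $\lambda\in\Lambda$ and every realization of the data. Comparing the penalized $\ctrain$-criterion of Definition~\ref{def_kern} at $\learnrule_\lambda(D_{n_t})$ and at $0\in\mathcal{H}$ yields $P_{n_t}(\ctrain\circ\learnrule_\lambda)+\lambda\hNorm{\learnrule_\lambda}^2\leq P_{n_t}(\ctrain(0,\cdot))$; since $\ctrain$ is $L$-Lipschitz in its first variable and $\lvert\learnrule_\lambda(X_i)\rvert\leq\sqrt{K(X_i,X_i)}\,\hNorm{\learnrule_\lambda}\leq\sqrt{\kappa}\,\hNorm{\learnrule_\lambda}$ by the reproducing property, this gives $\lambda\hNorm{\learnrule_\lambda}^2\leq L\sqrt{\kappa}\,\hNorm{\learnrule_\lambda}$, hence $\hNorm{\learnrule_\lambda}\leq L\sqrt{\kappa}/\lambda$ and $\NormInfinity{\learnrule_\lambda}\leq\sqrt{\kappa}\,\hNorm{\learnrule_\lambda}\leq L\kappa/\lambda\leq L\kappa/\lambda_m$. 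Since $\cpred$ is also $L$-Lipschitz, it follows that for any $\lambda,\lambda'\in\Lambda$ the increment $\cpred(\learnrule_\lambda(X),Y)-\cpred(\learnrule_{\lambda'}(X),Y)$ is bounded in absolute value by $2L^2\kappa/\lambda_m$ --- with \emph{no} integrability assumption on $Y$ --- and, combining this uniform bound with $SC_{\cvxct,\slope}$ applied to the pair $(\learnrule_\lambda(X),\learnrule_{\lambda'}(X))$, its conditional variance is at most $\bigl(\cvxct\vee(2\slope L\kappa/\lambda_m)\bigr)\bigl[\xloss{\learnrule_\lambda(X)}+\xloss{\learnrule_{\lambda'}(X)}\bigr]$. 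After integration, the candidate $\cpred$-losses therefore obey a Bernstein-type margin condition with constant $\cvxct\vee(2\slope L\kappa/\lambda_m)$ relative to the excess risks $\loss{\learnrule_\lambda}$.

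With these ingredients I would invoke Theorem~\ref{agcv_mean}: the collection $(\learnrule_\lambda)_{\lambda\in\Lambda}$ is finite of cardinality $\lvert\Lambda\rvert$, its members are uniformly bounded, their pairwise $\cpred$-loss increments are bounded and satisfy the margin condition just established, $Comp_C(\cpred,\ctrain)$ holds as required by the general framework, and the conditions $n_v\geq100$, $3\leq\lvert\Lambda\rvert\leq\mathrm{e}^{\sqrt{n_v}}$ are precisely its regime hypotheses. Its conclusion is an oracle inequality $(1-\theta)\bE[\loss{\ERMho{T}}]\leq(1+\theta)\bE[\min_{\lambda\in\Lambda}\loss{\learnrule_\lambda(D_{n_t})}]+\mathrm{rem}$, with $\mathrm{rem}$ an explicit function of $\theta$, $n_v$, $n_t$, $\lvert\Lambda\rvert$, the margin constant and the increment bound. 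Substituting the margin constant $\cvxct\vee(2\slope L\kappa/\lambda_m)$ and the increment bound $2L^2\kappa/\lambda_m$, then crudely bounding the resulting sum by the maximum of its terms (the numerical factor being absorbed into $b_1,b_2$), produces the three-term maximum of \eqref{rkhs_thm_eq2}: the ``variance'' contribution $18\cvxct\log(n_v\lvert\Lambda\rvert)/(\theta n_v)$, and the two lower-order contributions of the bounded-increment part, which carry the extra powers of $\lambda_m^{-1}$, $\log(n_v\lvert\Lambda\rvert)$ and $\theta^{-1}$ and whose coefficients $b_1,b_2$ depend only on $\kappa,L,\slope,C$ (note $\cvxct$ does not enter $b_1,b_2$, consistent with its appearing only through the first term).

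The main obstacle is Theorem~\ref{agcv_mean} itself --- a hold-out oracle inequality requiring only \emph{local}, in fact pairwise, control of variances and increments rather than a uniform envelope over $\parspace$ --- which must be arranged so that no candidate is ever compared to the possibly unbounded Bayes element $\bayes$ or to $Y$, but only candidates to one another; granting that result, the work specific to the present theorem is the verification of its hypotheses for regularized kernel estimators, whose crux is the deterministic $\NormInfinity{\cdot}$-bound above together with the translation of $SC_{\cvxct,\slope}$ into the pairwise margin condition, followed by the routine but somewhat delicate constant bookkeeping needed to reach the clean form \eqref{rkhs_thm_eq2}.
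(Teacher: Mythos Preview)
Your reduction from Agghoo to the hold-out via convexity is correct and matches the paper. The gap is in the second step: the \emph{crude} bound $\NormInfinity{\learnrule_\lambda(D_{n_t})}\leq L\kappa/\lambda_m$ is not sharp enough to reach \eqref{rkhs_thm_eq2}. If you feed the resulting margin constant $\cvxct\vee(2\slope L\kappa/\lambda_m)$ into Theorem~\ref{agcv_mean}, the variance contribution to the remainder is of order
\[
\bigl[\cvxct\vee(2\slope L\kappa/\lambda_m)\bigr]\,\frac{\log(n_v\lvert\Lambda\rvert)}{\theta n_v}
\enspace .
\]
When the $\slope$-part dominates (which is the typical case, since $\lambda_m$ is taken small), this is $\asymp \slope L\kappa\,\log(n_v\lvert\Lambda\rvert)/(\theta\lambda_m n_v)$. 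That is larger than each of the three terms in \eqref{rkhs_thm_eq2} by factors of $\theta^2 n_v/\log(n_v\lvert\Lambda\rvert)$ and $\sqrt{n_t/\log(n_v\lvert\Lambda\rvert)}$ respectively, and these factors cannot be absorbed into $b_1,b_2$ since the latter are required to be independent of $n_v,n_t,\lambda_m$. Note also that your argument never touches the training sample, so no $1/\sqrt{n_t}$ can possibly appear.

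What the paper does instead is prove a \emph{local} bound (Claim~\ref{prop_unif_bound}):
\[
\NormInfinity{\widehat t_\lambda-\widehat t_\mu}^2 \;\leq\; \frac{\kappa C}{\lambda_m}\,\loss{\widehat t_\mu}\;+\;\mathcal{O}\!\left(\frac{\kappa^2 L^2}{\lambda_m^2 n_t}\right)
\quad(\lambda\leq\mu),
\]
so that the increment between two candidates is controlled by the \emph{excess risk} of the better one plus a small random term coming from the training empirical process. Establishing this requires (i) a uniform deviation bound for $(P_{n_t}-P)(\ctrain\circ t_1-\ctrain\circ t_2)$ over RKHS balls together with a peeling argument, and (ii) a structural property of the regularization path (Lemma~\ref{ridge_reg_prop}) ensuring $\hNorm{\widehat t_\lambda-\widehat t_\mu}^2\leq\hNorm{\widehat t_\lambda}^2-\hNorm{\widehat t_\mu}^2$. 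Only with this excess-risk-dependent bound do the functions $\widehat w_A,\widehat w_B$ entering hypothesis $H$ have the right growth, giving $\delta^2$ values that produce the $\log^2/(\theta^3\lambda_m n_v^2)$ and $\log^{3/2}/(\theta\lambda_m n_v\sqrt{n_t})$ terms. Your uniform bound short-circuits exactly this localization, which is the heart of the proof.
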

Theorem~\ref{rkhs_thm} is proved in Appendix~\ref{sec_proof_rkhs} as a consequence of a result valid in the general framework of Section~\ref{risk_minim} (Theorem~\ref{agcv_mean}).
It shows that $\ERMag{\cT}$ satisfies an oracle inequality of the form \eqref{or_ineq}, with $\learnrule_\lambda(D_{n_t})$ instead of $\learnrule_\lambda(D_n)$ on the right-hand side of the inequality. The fact that $D_{n_t}$ appears in the bound instead of $D_n$ is a limitation of our result, but it is natural since predictors aggregated by Agghoo are only trained on part of the data. In most cases, it can be expected that $\loss{\learnrule_\lambda(D_{n_t})}$ is close to $\loss{\learnrule_\lambda(D_{n})}$ whenever $\frac{n_t}{n}$ is close to $1$. 

The assumption that $K$ is bounded is mild. For instance, popular kernels such as Gaussian kernels, $(x,x') \mapsto \exp [-\Norm{x-x'}^2/(2h^2)]$ for some $h > 0$, or Laplace kernels, $(x,x') \mapsto \exp (-\Norm{x-x'}/h)$ for some $h > 0$, are bounded by $\kappa = 1$.

Taking $\lvert \cT \rvert = 1$ in Theorem~\ref{rkhs_thm} yields a new oracle inequality for the hold-out. 
Oracle inequalities for the hold-out have already been proved in a variety of settings (see \cite{Arl_Cel:2010:surveyCV} for a review), and used to obtain adaptive rates
in regularized kernel regression \cite{Ste_Chr:2008}. 
However, this work has mostly been accomplished under the assumption that the contrast $\cfun \left( \learnrule_\lambda(D_n),(X,Y) \right)$ is bounded uniformly (in $n$, $D_n$ and $\lambda \in \Lambda$) by a constant. 
If this constant increases with $n$, bounds obtained in this manner may worsen considerably.
As many ``natural'' regression procedures ---including regularized kernel regression (Definition~\ref{def_kern})--- fail to satisfy such bounds, some theoreticians introduce ``truncated'' versions of standard procedures \cite{Ste_Chr:2008}, 
but truncation has no basis in practice. Theorem~\ref{rkhs_thm} avoids these complications.

\medbreak

In order to be satisfactory, Theorem~\ref{rkhs_thm} should prove that Agghoo performs asymptotically as well as the best choice of $\lambda \in \Lambda$,
at least for reasonable choices of $\Lambda$.
This is the case whenever the maximum in Equation~\eqref{rkhs_thm_eq2} is negligible with respect to the oracle excess risk $\mathbb{E}[\min_{\lambda \in \Lambda} \loss{\learnrule_\lambda(D_{n_t})} ]$ as $n \rightarrow + \infty$. 
This depends on the range $[\lambda_m;+\infty)$ in which the hold out is allowed to search for the optimal $\lambda$. On the one hand, it is desirable that this interval be wide enough to contain the true optimal value. On the other hand, if $\lambda_m = 0$, then inequality \eqref{rkhs_thm_eq2} becomes vacuous. We now provide precise examples where Theorem~\ref{rkhs_thm} applies with a remainder term in Equation~\eqref{rkhs_thm_eq2} that is negligible relative to the oracle excess risk. 

Take the example of median regression, in which $\ctrain(u,y) = \cpred(u,y) = |u-y|$. Then $Comp_1(\cpred,\ctrain)$ holds trivially.
Make also the same assumptions as in Proposition~\ref{prop_sc}, which ensures that $SC_{\rho,\nu}$ holds for some 
finite values of $\rho$ and $\nu$. Theorem~\ref{rkhs_thm} therefore applies as long as the kernel $K$ is bounded 
and $\lambda_m > 0$. Choose $n_v = n_t = \frac{n}{2}$ and $\Lambda$ of cardinality at most polynomial in $n$ (which
is sufficient in theory and in practice).
Then \cite[Theorem 9.6]{Ste_Chr:2008} proves the consistency of $\learnrule_{\lambda_n}(D_n)$ as $n \rightarrow +\infty$, provided that $\lambda_n^2 n \rightarrow + \infty$.
This suggests choosing $\lambda_m = 1/\sqrt{n_t}$, in which case the remainder term of Equation~\eqref{rkhs_thm_eq2} is of order $(\log n)^{3/2} / n$, which is negligible relative to nonparametric convergence rates in median regression.

In order to have a more precise idea of the order of magnitude of the oracle excess risk, let us consider median regression with a Gaussian kernel. Under some assumptions, one of which coincides with Proposition~\ref{prop_sc}, \cite[Corollary 4.12]{eberts2013} shows that taking $\lambda_n = \frac{c_1}{n}$ leads to rates of order $n^{-\frac{2\alpha}{2\alpha + d}}$, where $d \in \mathbb{N}$ is the dimension of $\cX$ and $\alpha > 0$ is the smoothness of $\bayes$. 
Therefore, taking $\lambda_m = 1 / n_t$ in Theorem~\ref{rkhs_thm}, the remainder term of Equation~\eqref{rkhs_thm_eq2} is at most of order $(\log n)^{3/2} / \sqrt{n}$, hence negligible relative to the above risk rates as soon as $2\alpha < d$.

\medbreak

Theorem~\ref{rkhs_thm} can handle situations where $\cpred$ is different from the training loss $\ctrain$, provided that $Comp(\cpred,\ctrain)$ holds true. Such situations arise for instance in the case of support vector regression \cite[Chapter 9]{Sch_Smo:2001}, which uses for training Vapnik's $\varepsilon$-insensitive loss $c^{eps}_\varepsilon(u,y) = (|u-y| - \varepsilon)_+$. This loss depends on a parameter $\varepsilon$, the choice of which is usually motivated by a tradeoff between sparsity and prediction accuracy \cite{Sch_Smo:2001}. Therefore, some other loss is typically used to measure predictive performance, independently of $\varepsilon$.
We state one possible application of Theorem~\ref{rkhs_thm} to this case, as a corollary.

\begin{corollary}[$\varepsilon$-regression] \label{eps_reg} 
Let $\ctrain= c^{eps}_\varepsilon: (u,y) \mapsto (|y-u| - \varepsilon)_+$
be Vapnik's $\varepsilon$-insensitive loss and assume that the evaluation loss is $\cpred = c_0^{eps}: (u,y) \mapsto |u -y|$. Assume that for every $x$ the conditional distribution of $Y$ given $X = x$ has a  unimodal density with respect to the Lebesgue measure, symmetric around its mode. Introduce the robust noise parameter\textup{:} 
\begin{equation}
\begin{split}
\sigma &= \sup_{x \in \cX} \left\{  \inf \left\{y \in \mathbb{R} \,\Big\vert\, \mathbb{P}(Y \leq y \,\vert\, X = x) \geq \frac{3}{4}  \right\} \right. \\ 
 &\qquad\qquad  
 \left. - \sup \left\{y \in \mathbb{R} \,\Big\vert\, \mathbb{P}(Y \leq y \,\vert\, X = x) \leq \frac{1}{4} \right\} \right\} 
 \enspace . 
\end{split}
\label{def_sigma}
\end{equation}
Then, applying Agghoo to a finite subfamily $(\learnrule_\lambda)_{\lambda \in \Lambda}$ of the rules given by Definition~\ref{def_kern} with $\ctrain= c^{eps}_\varepsilon$ and a kernel $K$ such that $\NormInfinity{K} \leq 1$ yields the following oracle inequality. Assuming $n_v \geq 100$ and $3 \leq \lvert \Lambda \rvert \leq \mathrm{e}^{\sqrt{n_v}}$, for any $\theta \in (0;1]$,
\begin{align*}
(1 - \theta ) \mathbb{E}\Bigl[\lossb{\ERMag{\cT}}\Bigr] 
 &\leq ( 1+\theta )\mathbb{E}\Bigl[\min_{\lambda \in \Lambda} \lossb{\learnrule_\lambda(D_{n_t})} \Bigr] \\ 
 &+ \max \left\{ 72\sigma \frac{\log \bigl( n_v \lvert \Lambda \rvert \bigr)}{\theta n_v}  \, , \, 
 b_1 \frac{\log^2 \bigl( n_v\lvert \Lambda \rvert \bigr)}{\theta^3 \lambda_m n_v^2} \, , \, b_2 \frac{\log^{\frac{3}{2}} \bigl( n_v\lvert \Lambda \rvert \bigr)}{\theta \lambda_m n_v \sqrt{n_t}} \right\} 
 \enspace ,
\end{align*}
where $b_1$ and $b_2$ are absolute constants.
\end{corollary}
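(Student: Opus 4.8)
The plan is to derive the inequality from Theorem~\ref{rkhs_thm} applied to $(\ctrain,\cpred)=(c^{eps}_\varepsilon,c_0^{eps})$. The hypotheses of Theorem~\ref{rkhs_thm} that do not involve the losses are all assumed here ($\kappa=\NormInfinity{K}\leq1$, $\lambda_m=\min\Lambda>0$, $n_v\geq100$, $3\leq\lvert\Lambda\rvert\leq\mathrm{e}^{\sqrt{n_v}}$), so it remains to check: (i) both losses are convex and $1$-Lipschitz in their first argument, giving $L=1$; (ii) $Comp_C(\cpred,\ctrain)$ holds with $C=1$; (iii) $(\cpred,X,Y)$ satisfies $SC_{\cvxct,\slope}$ with $\cvxct=4\sigma$ and $\slope=8$. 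Substituting $L=1$, $C=1$, $\kappa\leq1$, $\cvxct=4\sigma$, $\slope=8$ into \eqref{rkhs_thm_eq2} then gives the claim: the leading constant becomes $18\cvxct=72\sigma$, and $b_1,b_2$ --- which in Theorem~\ref{rkhs_thm} depend only on $\kappa,L,\slope,C$ --- become absolute constants.

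Point (i) is immediate, since $u\mapsto\lvert u-y\rvert$ and $u\mapsto\max(\lvert u-y\rvert-\varepsilon,0)$ are convex and $1$-Lipschitz. For (ii), fix $x$ and let $m(x)$ be the mode of the conditional density $f_x$ of $Y$ given $X=x$; symmetry of $f_x$ about $m(x)$ gives $F_x(m(x))=\tfrac12$, so $m(x)$ is the conditional median, hence the Bayes element $\bayes$ for $\cpred$. The conditional $\ctrain$-risk $u\mapsto\bE[(\lvert u-Y\rvert-\varepsilon)_+\mid X=x]$ has subgradient $F_x(u-\varepsilon)+F_x(u+\varepsilon)-1$, which vanishes at $u=m(x)$ (symmetry) and is nondecreasing, so $m(x)$ minimizes it too; thus $\bayes$ is a common Bayes element. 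Using $(\lvert t\rvert-\varepsilon)_+=\lvert t\rvert-(\lvert t\rvert\wedge\varepsilon)$ and $h_x(u):=\bE[\lvert u-Y\rvert\wedge\varepsilon\mid X=x]$, the conditional excess $\ctrain$-risk at $u$ equals $\xloss{u}-\bigl(h_x(u)-h_x(\bayes(x))\bigr)$. One then checks that $\bayes(x)=m(x)$ minimizes $h_x$: $h_x$ is symmetric about $m(x)$, and on $[m(x),+\infty)$ its derivative is the $f_x$-mass of $(u-\varepsilon,u]$ minus that of $(u,u+\varepsilon]$, which is $\geq0$ because $f_x$ is nonincreasing on $[m(x),+\infty)$ (reflect about the midpoint of $[u-\varepsilon,u+\varepsilon]$ if $u-\varepsilon\geq m(x)$; split at $m(x)$ and use unimodality and symmetry if $u-\varepsilon<m(x)$). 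Hence $h_x(u)\geq h_x(\bayes(x))$, the subtracted term is nonnegative, and integrating over $X$ yields $\risk_{\ctrain}(t)-\risk_{\ctrain}(\bayes)\leq\risk_{\cpred}(t)-\risk_{\cpred}(\bayes)$, i.e.\ $Comp_1(\cpred,\ctrain)$.

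For (iii), $f_x$ nonincreasing on $[m(x),+\infty)$ makes $F_x$ concave there; with the quartiles $q_1(x),q_3(x)$ appearing in the definition of $\sigma$, symmetry gives $q_3(x)-m(x)=m(x)-q_1(x)=\tfrac12(q_3(x)-q_1(x))\leq\sigma/2$, so the chord bound for concave functions yields $F_x(u)-\tfrac12\geq\frac{u-m(x)}{2\sigma}$ for $u\in[m(x),m(x)+\sigma/2]$. Integrating $\frac{d}{du}\bE[\lvert u-Y\rvert\mid X=x]=2F_x(u)-1$ and using symmetry, one obtains, a.s., $\xloss{u}\geq\frac{(u-\bayes(X))^2}{2\sigma}$ when $\lvert u-\bayes(X)\rvert\leq\sigma/2$ and $\xloss{u}\geq\frac{\lvert u-\bayes(X)\rvert}{4}$ otherwise. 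Together with $\bE[(\cpred(u,Y)-\cpred(v,Y))^2\mid X]\leq(u-v)^2$ (reverse triangle inequality) and $a^2+b^2\geq\tfrac12(a-b)^2$, a short case analysis --- according to whether $0$, $1$ or $2$ of the points $u,v$ lie within $\sigma/2$ of $\bayes(X)$ --- establishes \eqref{hyp.SC} with $\cvxct=4\sigma$ and $\slope=8$. If $Y$ is not integrable, one works throughout with the shifted losses of the Remark following Example~\ref{def_reg}, which leaves $\bayes$ and all excess risks unchanged.

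The main obstacle is step (ii): showing that truncating the absolute deviation at level $\varepsilon$ does not move the conditional minimizer, i.e.\ that the mode of $f_x$ minimizes $h_x$. This is the one place where the two assumptions on $f_x$ --- symmetry and unimodality --- are genuinely used together (neither alone produces a common Bayes element), via the interval-mass comparison above, and the required monotonicity of $h_x$ on $[m(x),+\infty)$ needs a small but slightly delicate argument in the case $u-\varepsilon<m(x)$. Everything else --- the Lipschitz constants, tracking the factor $4\sigma$ through $SC$, and the final substitution into Theorem~\ref{rkhs_thm} --- is routine bookkeeping.
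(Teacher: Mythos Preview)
Your proof is correct and follows essentially the same route as the paper: verify $L=1$, $Comp_1(c_0^{eps},c_\varepsilon^{eps})$, and $SC_{4\sigma,8}$, then plug into Theorem~\ref{rkhs_thm}. The only differences are cosmetic --- the paper proves $Comp_1$ by showing $R'_{\varepsilon,x}\leq R'_{0,x}$ on $[\bayes(x),\infty)$ via the pointwise inequality $p_x(u+t)\leq p_x(u-t)$ for $u\geq \bayes(x)$ (your $h_x$ argument is this in disguise, since $h_x=R_{0,x}-R_{\varepsilon,x}$), and it obtains $SC_{4\sigma,8}$ by invoking Proposition~\ref{prop_sc} with $a(x)=1/\sigma(x)$, $b(x)=\sigma(x)/2$ rather than running the case analysis by hand.
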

Corollary~\ref{eps_reg} is proved in Appendix~\ref{sec.proof_cor_eps_reg}.

When $\varepsilon = 0$, 
$\varepsilon$-regression becomes median regression, which is discussed above. The oracle inequality of Corollary~\ref{eps_reg} is then the same as that given by Theorem~\ref{rkhs_thm} and Proposition~\ref{prop_sc}. Assumptions of unimodality and symmetry allow to give more explicit values of $a_m$ and $\mu_m$ in terms of $\sigma$. 
When $\varepsilon > 0$, 
the unimodality and symmetry assumptions are used to prove hypothesis $Comp_C(\cpred,c)$.

\subsection{Classification}

Loss functions are not all convex. When convexity fails, the aggregation procedure should be revised. 

In classification, Majhoo is a possible solution (see Definition~\ref{agcv_def_classif}). By Proposition~\ref{maj_vote} in Appendix~\ref{sec_classif}, majority voting satisfies a kind of ``convexity inequality'' with respect to the 0--1 loss; as a result, oracle inequalities for the hold-out imply oracle inequalities for majhoo.

Hold-out for binary classification with 0--1 loss has been studied by Massart \cite{Mas:2003:St-Flour}.
In that work, Massart makes an assumption which is closely related to margin hypotheses, such as the Tsybakov noise condition \cite{Mam_Tsy:1999} which we consider here. This approach allows to derive the following theorem.

\begin{Theorem}
\label{thm_classif}
Consider the classification setting described in Example~\ref{classif_pbm} with $M = 2$ classes \textup{(}binary classification\textup{)}.
Let $(\learnrule_m)_{m \in \cM}$ be a collection of learning rules 
and $\mathcal{T}$ a collection of training sets 
satisfying assumption~\eqref{hyp.T}. 

Assume that there exists $\beta \geq 0$ and $r \ge 1$ such that for $\xi = (X,Y)$ with distribution $P$,
\begin{equation}\label{hyp.MA} \tag{MA}
\forall h>0, \qquad 
\mathbb{P} \bigl( \bigl\lvert 2\eta(X) - 1 \bigr\rvert \leq h \bigr) \leq r h^{\beta}
\end{equation}
where $\eta(X) := \mathbb{P} (Y = 1 \,\vert\, X )$. 
Then, we have  
\[ 
\bE \Bigl[ \lossb{\ERMmv{\cT}} \Bigr] 
\leq 3 \bE \left[ \inf_{m \in \cM} \lossb{\learnrule_m(D_{n_t})} \right] 
 + \frac{ 29 r^{\frac{1}{\beta + 2}}  \log \bigl( \mathrm{e} \lvert \cM \rvert \bigr)}{n_v^{\frac{\beta + 1}{\beta + 2}}}  
 \enspace .
\]
\end{Theorem}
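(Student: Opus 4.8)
The plan is to derive the result from two ingredients: a ``convexity inequality'' for the majority vote with respect to the $0$--$1$ loss, which costs a multiplicative factor $2$, and an oracle inequality for the hold-out with leading constant $3/2$; multiplying the two produces the factor $3$ and the remainder constant $29 = 2\times 14.5$.

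\emph{Step 1: majority vote.} For the $0$--$1$ loss the conditional excess risk at $x$ equals $\ell_x(a) = \lvert 2\eta(x)-1\rvert\,\mathbb{I}\{a\neq s(x)\}$, where $s(x) = \mathbb{I}\{\eta(x)\geq 1/2\}$ is the Bayes classifier. Given classifiers $(f_T)_{T\in\cT}$ with majority vote $f^\star$, if $f^\star(x)\neq s(x)$ then the winning label $f^\star(x)$ is supported by at least $\lvert\cT\rvert/2$ of the $f_T(x)$ (only two labels), and all of these differ from $s(x)$; hence $\mathbb{I}\{f^\star(x)\neq s(x)\}\leq \tfrac{2}{\lvert\cT\rvert}\sum_{T\in\cT}\mathbb{I}\{f_T(x)\neq s(x)\}$. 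Multiplying by $\lvert 2\eta(x)-1\rvert\geq 0$ and integrating in $x$ gives $\loss{f^\star}\leq\tfrac{2}{\lvert\cT\rvert}\sum_{T\in\cT}\loss{f_T}$ (this is Proposition~\ref{maj_vote}). Applying it with $f_T = \ERMho{T}$, taking expectations, and using~\eqref{hyp.T} --- so that $\cT$ is independent of $D_n$ and, for every $T\in\cT$, the pair $(D_n^T,D_n^{T^c})$ has the law of an independent pair of samples of sizes $n_t$ and $n_v$ --- we obtain $\bE[\lossb{\ERMmv{\cT}}]\leq 2\,\bE[\lossb{\ERMho{T_0}}]$ for any fixed $T_0\subset\{1,\dots,n\}$ with $\lvert T_0\rvert = n_t$.

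\emph{Step 2: hold-out oracle inequality.} It remains to show $\bE[\lossb{\ERMho{T_0}}]\leq \tfrac32\,\bE[\inf_{m\in\cM}\lossb{\learnrule_m(D_{n_t})}] + 14.5\, r^{1/(\beta+2)}\log(\mathrm{e}\lvert\cM\rvert)\,n_v^{-(\beta+1)/(\beta+2)}$; since excess risks are at most $1$, one may assume this remainder is $<1$, else the bound is trivial. Conditioning on $D_n^{T_0}$, the classifiers $\widehat f_m := \learnrule_m(D_n^{T_0})$ are fixed and $\ERMho{T_0} = \widehat f_{\widehat m}$, where $\widehat m$ minimizes $P_n^{T_0^c}\gamma(\widehat f_m,\cdot)$ over the independent validation sample of size $n_v$. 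With $m^\star\in\argmin_m\loss{\widehat f_m}$, the usual comparison gives $\loss{\widehat f_{\widehat m}}\leq\loss{\widehat f_{m^\star}} + (P_n^{T_0^c}-P)\bigl(\gamma(\widehat f_{m^\star},\cdot)-\gamma(\widehat f_{\widehat m},\cdot)\bigr)$. The empirical-process term is controlled by Bernstein's inequality and a union bound over $m\in\cM$, using that for the $0$--$1$ loss $\mathbb{E}[(\gamma(\widehat f_{m^\star},\cdot)-\gamma(\widehat f_m,\cdot))^2] = \mathbb{P}(\widehat f_{m^\star}(X)\neq\widehat f_m(X))\leq\mathbb{P}(\widehat f_{m^\star}\neq s)+\mathbb{P}(\widehat f_m\neq s)$, and that~\eqref{hyp.MA} implies $\mathbb{P}(f\neq s)\leq C_\beta\, r^{1/(\beta+1)}\loss{f}^{\beta/(\beta+1)}$ (split on $\{\lvert 2\eta-1\rvert\leq h\}$ and optimize in $h$). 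Plugging these in and applying Young's inequality $\sqrt{A}\,u^{\beta/(2(\beta+1))}\leq\epsilon u + C(\epsilon,\beta)A^{(\beta+1)/(\beta+2)}$ to the terms in $\loss{\widehat f_{\widehat m}}$ and $\loss{\widehat f_{m^\star}}$, with $\epsilon = 1/5$ so that $(1+\epsilon)/(1-\epsilon)=3/2$, then integrating the deviation bound over the confidence level $\delta$ (using $\loss{\widehat f_{\widehat m}}\leq 1$), and finally weakening the exponent $(\beta+1)/(\beta+2)$ of $\log(\mathrm{e}\lvert\cM\rvert)$ to $1$ and noting $n_v^{(\beta+1)/(\beta+2)}\leq n_v$ (which absorbs the lower-order Bernstein term), yields the hold-out bound. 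Taking expectation over $D_n^{T_0}$ and combining with Step~1 gives the theorem. Alternatively, Step~2 can be obtained by checking that~\eqref{hyp.MA} implies Massart's abstract noise condition and invoking \cite[Corollary~8.8]{Mas:2003:St-Flour}, adjusting the numerical constants.

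\emph{Main obstacle.} The delicate part is the bookkeeping in Step~2: bringing the leading constant down to exactly $3/2$ while keeping the power of $r$ equal to $\tfrac{1}{\beta+2}$, simplifying the $\log$-exponent to $1$, absorbing the several lower-order contributions, and turning the high-probability bound into an expectation bound so that the final remainder constant is $29$ --- all of which hinges on careful choices of the free parameters $h$, $\epsilon$ and $\delta$. One must also verify that the argument degrades gracefully as $\beta\to 0$, where~\eqref{hyp.MA} (with $r\geq 1$) essentially forces the crude bound $\mathbb{P}(f\neq s)\leq 1$ and the rate becomes $n_v^{-1/2}$.
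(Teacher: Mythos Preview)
Your proposal is correct and follows essentially the same approach as the paper: the paper proves the majority-vote inequality (your Step~1 is exactly Proposition~\ref{maj_vote} with $M=2$), verifies that \eqref{hyp.MA} yields the variance--excess-risk relation $\mathbb{P}(t\neq\bayes)\le 2r^{1/(\beta+1)}\loss{t}^{\beta/(\beta+1)}$, and then invokes Massart's \cite[Eq.~(8.60)]{Mas:2003:St-Flour} with $\theta=1/5$ to get the hold-out bound before multiplying by~$2$. The only presentational difference is that the paper uses Massart's result as a black box for Step~2 rather than redoing Bernstein plus Young directly---but you explicitly list that route as your alternative, so there is no real divergence.
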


Theorem~\ref{thm_classif} is proved in Appendix~\ref{sec_classif}. 
It shows that $\ERMmv{\cT}$, like $\ERMag{\cT}$, satisfies an oracle inequality of the form \eqref{or_ineq} with $\learnrule_\lambda(D_{n_t})$ instead of $\learnrule_\lambda(D_n)$.
Tsybakov's noise condition \eqref{hyp.MA} only depends on the distribution of $(X,Y)$ and not on the collection of learning rules. 
It is a standard hypothesis in classification, under which ``fast'' learning rates ---faster than $n^{-1/2}$--- are attainable \cite{Tsy:2004}. 
In contrast with the results of Section~\ref{sec_rkhs}, 
that are valid for various losses but only for a specific type of learning rule, 
Theorem~\ref{thm_classif} holds true for \emph{any} family of classification rules.

The constant $3$ in front of the oracle excess risk can be replaced by any constant larger than $2$, 
at the price of increasing the constant in the remainder term, as can be seen from the proof (in Appendix~\ref{sec_classif}). 
However, our approach cannot yield a constant lower than $2$, 
because we use Proposition~\ref{maj_vote} instead of a convexity argument, since the 0--1 loss is not convex.

\section{Numerical experiments} \label{sec_simus}

This section investigates how Agghoo and Majhoo's performance vary with their parameters $V$ and $\tau = \frac{n_t}{n}$, and how it compares to CV's performance at a similar computational cost ---that is, for the same values of $V$ and $\tau$. 
Two settings are considered, corresponding to Corollary~\ref{eps_reg} and Theorem~\ref{thm_classif}.

\subsection{$\varepsilon$-regression} \label{eps_reg_simu}
Consider the collection $(\learnrule_\lambda)_{\lambda \in \Lambda}$ of regularized kernel estimators (see Definition~\ref{def_kern}) with loss function $c^{eps}_\varepsilon(u,y) = (|u-y|- \varepsilon)_+$ and Gaussian kernel $K(x,x') = \exp [- (x-x')^2 / (2h^2) ]$ over $\cX = \mathbb{R}$.

\paragraph{Experimental setup}
Data $(X_1,Y_1), \ldots, (X_n,Y_n)$ are independent, 
with $X_i \sim \mathcal{N}(0,\pi)$, $Y_i = \bayes(X_i) + Z_i$,
with $Z_i \sim \mathcal{N}(0,1/2)$ independent from $X_i$. The regression function is
$ \bayes(x) = \mathrm{e}^{\cos(x)}$, the kernel parameter is $h = \frac{1}{2}$ and the threshold for the $\varepsilon$-insensitive loss is $\varepsilon = \frac{1}{4}$. 
Agghoo is applied to  $\left(\learnrule_\lambda \right)_{\lambda \in \Lambda}$ over the grid $\Lambda = \{ \frac{2^{j-1}}{500 n_t} \,\vert\, 0 \leq j \leq 17  \} $, corresponding to the grid $\{ \frac{500}{2^j} \,\vert\, 0 \leq j \leq 17 \}$ over the cost parameter $C = \frac{1}{2\lambda n_t}$. 
Risk estimation is performed
using $L^1$ loss $\cpred(u,y) = \lvert u - y \rvert$.
%
%
Agghoo and CV training sets $T \in \cT$ are chosen independently and uniformly 
among the subsets of $\{1, \ldots, n\}$ with cardinality $\floor{\tau n}$,  
for different values of $\tau$ and~$V = \lvert \cT \rvert$; 
hence, CV corresponds to what is usually called ``Monte-Carlo CV'' \cite{Arl_Cel:2010:surveyCV}. 
%
%
Each algorithm is run on $1000$  
independent samples of size $n=500$, 
and independent test samples of size $1000$ are used 
for estimating the $L^1$ excess risks 
$\ell(\bayes,\ERMag{\cT})$, $\ell(\bayes, \ERM{\cT}^{\mathrm{cv}} ) $ 
and the oracle excess risk $\inf_{\lambda \in \Lambda} \ell(\bayes, \learnrule_\lambda(D_n) )$. 
Expectations of these quantities are estimated by taking an average over the $1000$ samples; 
we also compute standard deviations for these estimates, 
which are not shown on Figure~\ref{eps_svm_simu} since they are all smaller than $2.7 \%$ 
of the estimated value, so that most visible differences on the graph are significant.

\begin{figure}
 \vspace*{-2cm} 
\includegraphics[scale = 0.6]{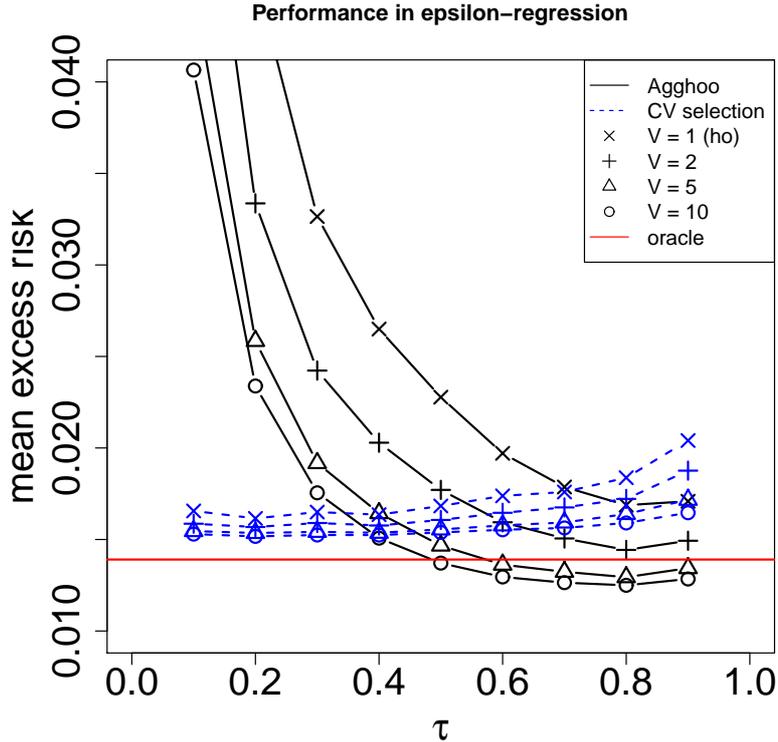} 
\vspace*{-0.5cm} 
  \caption{\label{eps_svm_simu} Performance of Agghoo and CV 
for $\varepsilon$-regression 
}
\end{figure}

\paragraph{Results} are shown on Figure~\ref{eps_svm_simu}.
The performance of Agghoo strongly depends on both $\tau$ and $V$. 
For a fixed $\tau$, increasing $V$ improves significantly the performance of the resulting estimator.  
Most of the improvement occurs between $V = 1$ and 
$V = 5$, and taking $V$ much larger seems useless ---at least for $\tau \geq 0.5$---, 
a behavior previously observed for CV \cite{Arl_Ler:2012:penVF:JMLR}. 
For a fixed $V$, the risk strongly decreases when $\tau$ increases from $0.1$ to $0.5$, decreases slowly over the interval $[0.5;0.8]$ and seems to rise for $\tau > 0.8$. 
It seems that $\tau\in [0.6,0.9]$ yields the best performance, 
while taking $\tau$ close to $0$ should clearly be avoided (at least for $V \leq 10$). 
Taking $V$ large enough, say $V=10$, makes the choice of $\tau$ less crucial: 
a large region of values of $\tau$ yield (almost) optimal performance. We do not know whether taking $V$ larger can make the performance of Agghoo with $\tau \leq 0.4$ close to the optimum.

As a function of $\tau$, the risk of CV behaves quite differently from Agghoo's. The performance does not degrade significantly when $\tau$ is small. The optimum is located at $\tau = 0.2$, which is much smaller than for Agghoo. A possible explanation is that the regressors produced by cross-validation are all trained on the whole sample, so that $\tau$ only impacts risk estimation. Furthermore, additional simulations show, as expected, that higher values of $\tau$ ($\tau  = 0.8$ or $\tau = 0.9$) improve \emph{risk estimation} while degrading the \emph{hyperparameter selection} performance.
Compared to Agghoo, CV's performance depends much less on $V$: only $V=2$ appears to be significantly worse than $V \geq 5$. 

%
Let us now compare Agghoo and CV. For a given $\tau$, Agghoo performs much better than the hold-out. 
This is not surprising and confirms that considering several data splits is always useful. 
For fixed $(\tau,V)$ with $\tau \geq 0.5$, Agghoo does significantly better than CV if $V \geq 5$, mostly worse if $V=1$, 
and they yield similar performance for $V=2$. 
When both parameters are well chosen, Agghoo can outperform the oracle, which is possible because Agghoo involves aggregation. 
Cross-validation, which is a pure selection method, naturally cannot beat the oracle.
Overall, if the computational cost of $V=10$ data splits is not prohibitive, 
Agghoo with optimized parameters ($V=10$, $\tau \in [0.6 , 0.9]$) 
clearly improves over CV with optimized parameters ($V=10$, $\tau=0.2$). The same holds with $V = 5$.
This advocates for the use of Agghoo instead of CV, unless 
we have to take $V < 5$ for computational reasons.

\paragraph{Computational complexity} 
By Equation~\eqref{calc_rkhs_reg}, regularized kernel regressors can be represented linearly by vectors of length $n_t$, therefore the aggregation step can be performed at training time by averaging these vectors. The complexity of this aggregation is at most $\mathcal{O}(V \times n_t )$. In general, this is negligible relative to the cost of computing the hold-out, as simply computing the kernel matrix requires $n_t(n_t+1) / 2$ kernel evaluations. Therefore, the aggregation step does not affect much the computational complexity of Agghoo, so the conclusion of Section~\ref{sec.comp} that Agghoo and CV have similar complexity applies in the present setting. 

Evaluating Agghoo and CV on new data $x \in \cX$ also takes the same time in general, as both are computed by evaluating the expression $\sum_{j = 1}^{n_t} \theta_j K(X_j,x)$ with a pre-computed value of $\theta$. A potential difference occurs when the $\widehat{\theta}_\lambda$ ---given by Definition~\ref{def_kern}, Equation~\eqref{calc_rkhs_reg}--- are sparse: aggregation increases the number of non-zero coefficients, so evaluating $\ERMag{\cT}$ on new data can be slower than evaluating $\ERMcv{\cT}$ if the implementation is designed to take advantage of sparsity.

%


\subsection{$k$-nearest neighbors classification} \label{sec.simus.kNN}
Consider the collection $ (\learnrule_k^{\mathrm{NN}})_{k \geq 1, \, k \text{ odd}}$ 
of nearest-neighbors classifiers ---assuming $k$ is odd to avoid ties--- 
on the following binary classification problem. 

\paragraph{Experimental setup} 

Data $(X_1,Y_1), \ldots, (X_n,Y_n)$ are independent, 
with $X_i$ uniformly distributed over $\cX = [0,1]^2$ 
and 
\begin{gather*}
\mathbb{P} (Y_i = 1 \,\vert\, X_i) 
= \sigma\left(\frac{g(X_i) - b}{\lambda}\right) 
\\
\text{where } 
\forall u,v \in \mathbb{R}, \qquad 
\sigma(u) = \frac{1}{1 + \mathrm{e}^{-u}} 
\quad \text{and} \quad 
g(u,v) = \mathrm{e}^{-(u^2 + v)^3} + u^2 + v^2
\enspace , 
\end{gather*}
$b = 1.18$ and $\lambda = 0.05$. 
The Bayes classifier is $\bayes: x \mapsto \un_{g(x) \geq b}$ 
and the Bayes risk, computed numerically using the scipy.integrate python library, 
is approximately equal to $0.242$. 
%
%
%
Majhoo (the classification version of Agghoo, see Definition~\ref{agcv_def_classif}) and CV are used with the collection $(\learnrule_k^{\mathrm{NN}})_{k \geq 1, \, k \text{ odd}}$
and ``Monte Carlo'' training sets as in Section~\ref{eps_reg_simu}. An experimental procedure similar to the one of Section~\ref{eps_reg_simu} is used to evaluate the performance of Agghoo and to compare it with Monte-Carlo cross-validation. Standard deviations of the excess risk were computed; they are smaller than $3.6 \%$ of the estimated value. 

\begin{figure}
 \vspace*{-2cm} 
\includegraphics[scale = 0.6]{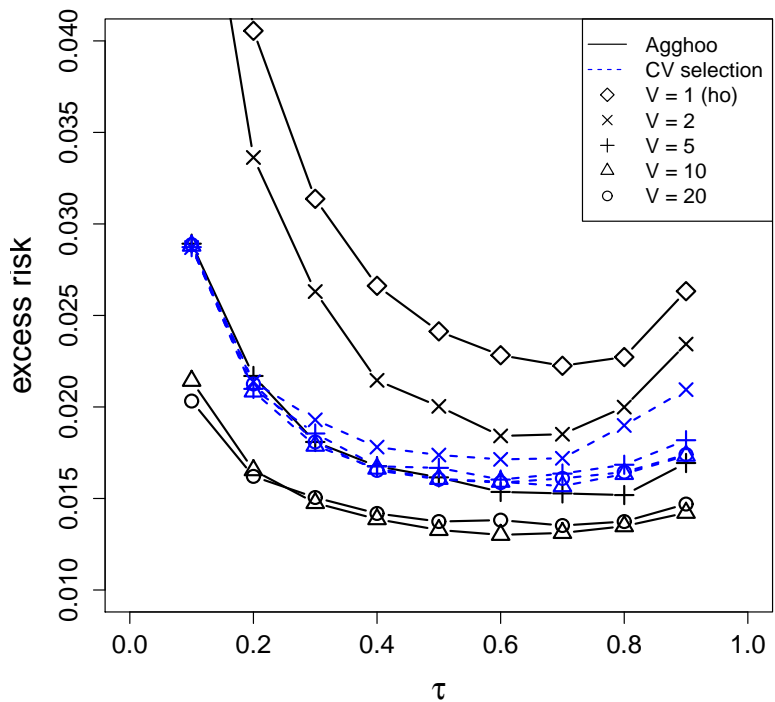} 
\vspace*{-0.5cm} 
  \caption{\label{AgghooVsCV} Classification performance of Majhoo and CV 
for the $k$-NN family 
}
\end{figure}

\paragraph{Results} are shown on Figure~\ref{AgghooVsCV}.
They are similar to the regression case (see Section~\ref{eps_reg_simu}), with a few differences. First, Agghoo does not perform better than the oracle. In fact, all methods considered here remain far from the oracle, which has an excess risk around $0.0034 \pm 0.0004$; both Agghoo and CV have excess risks at least 4 times larger. 
Second, risk curves as a function of $\tau$ for Agghoo are almost $U$-shaped, with a significant rise of the risk for $\tau > 0.6$. Therefore, less data is needed for training, compared to Section~\ref{eps_reg_simu}. The optimal value of $\tau$ here is $0.6$, at least for some values of $V$, up to statistical error. Third, the performance of CV as a function of $\tau$ has a similar U-shape, which makes the comparison between Agghoo and CV easier. 
For a given $\tau$, Agghoo performs significantly better if $V \geq 10$, while CV performs significantly better if $V = 2$; the difference is mild for $V = 5$.

\paragraph{Computational complexity}
As said in Section~\ref{sec.comp}, the complexity of computing the optimal parameters for CV ($\hat{k}_{\cT}^{cv}$) is the same as for Majhoo ($ (\hat{k}_{T}^{ho})_{T \in \cT}$). Here, there is no simple way to represent the aggregated estimator, so aggregation may have to be performed at test time.
In that case, the complexity of evaluating Majhoo on new data is roughly $V$ times greater than for CV, as explained in Section~\ref{sec.comp} for Agghoo.

\section{Discussion} \label{sec.concl}

%
Theoretical and numerical results of the paper 
show that Agghoo can be used safely in RKHS regression, at least when its parameters are properly chosen; 
$V \geq 10$ and $\tau = 0.8$ seem to be safe choices. 
A variant, Majhoo, can be used in supervised classification with the 0--1 loss, with a general guarantee on its performance (Theorem~\ref{thm_classif}). 
Experiments show that Agghoo actually performs much better than what the upper bounds of Section~\ref{sec_thms} suggest, with a significant improvement over cross-validation except when $V < 5$ splits are used. Proving theoretically that Agghoo can improve over CV is an open problem that deserves future works.  
%
%

%
Since Agghoo and CV have the same training computational cost for fixed $(V,\tau)$,
Agghoo ---with properly chosen parameters $V,\tau$--- should be preferred to CV, 
unless aggregation is undesirable for some other reason, such as interpretability of the predictors, or computational complexity at test time. 
%

%
\medbreak
Our results can be extended in several ways. 
%
%
First, our theoretical bounds directly apply to subagging hold-out,  
which also averages several hold-out selected estimators. 
The difference is that, in subagging, the training set size is $n-p-q$ and the validation set size is $q$, 
for some $q \in \{1, \ldots, n-p-1\}$, 
leading to slightly worse bounds than those we obtained for Agghoo (at least if $\mathbb{E}\left[ \loss{\learnrule_m(D_n)} \right]$ decreases with $n$). The difference should not be large in practice, if $q$ is well chosen. 

Oracle inequalities can also be obtained for Agghoo in other settings, as a consequence of our general theorems \ref{hold_out_gen2} and \ref{agcv_mean} in Appendix~\ref{app.general-thms}.  
%
%
%
%
%
%

\appendix

\section{General Theorems} \label{app.general-thms}

We need the following hypothesis, defined for two functions $w_i : \mathbb{R}_+ \rightarrow \mathbb{R_+}$, $i \in \{1;2\}$ and a family $(t_m)_{m \in \cM} \in \parspace^{\cM}$. 
\\ \\ 
Hypothesis $H(w_1,w_2,(t_m)_{m \in \cM} )$: 
$w_1$ and $w_2$ are non-decreasing, 
and for any $(m,m') \in \cM^2$, some $c_{m'}^m \in \mathbb{R}$ exists such that, 
for all $k \geq 2$, 
\begin{equation*} 
\begin{split}
P \Bigl( \bigl\lvert \cfun(t_m)- \cfun (t_{m'}) - c_{m'}^m \bigr\rvert^k \Bigr)
&\leq k! \Bigl[ w_1\bigl(\sqrt{\loss{t_m}}\bigr) + w_1(\sqrt{\loss{t_{m'}}}) \Bigr]^2 
\\
&\qquad \times \Bigl[ w_2\bigl(\sqrt{\loss{t_m}} \bigr) + w_2 \bigl( \sqrt{\loss{t_{m'}}} \bigr) \Bigr]^{k-2} 
\enspace .  
\end{split}
\end{equation*}
%
%
This hypothesis is similar to those used by Massart \cite{Mas:2003:St-Flour} to study the hold-out and empirical risk minimizers.
However, unlike \cite{Mas:2003:St-Flour}, we intend to go beyond the setting of bounded risks.

We also need the following definition. 
\begin{Definition} \label{def_delta}
Let $w: \mathbb{R}_+ \rightarrow \mathbb{R}_+$ and $r\in \mathbb{R}_+$.
Let
  \[ \delta(w,r) =  \inf \left\{ \delta \geq 0 : \forall x \geq \delta, w(x) \leq r x^2 \right\},\]
  with the convention $\inf \emptyset = + \infty$.
\end{Definition}

\begin{Remark}
\label{rmk_delta}
\begin{itemize}
 \item  If $r > 0$ and $x \mapsto \frac{w(x)}{x}$ is nonincreasing, then $\delta(w,r)$ is the unique solution to the equation $\frac{w(x)}{x} = r x$.
 \item $r \mapsto \delta(w,r)$ is nonincreasing.
 \item If $w(x) = c x^{\beta}$ for $c > 0$ and $\beta \in [0;2)$, then $\delta(w,r) = \left(\frac{c}{r}\right)^\frac{1}{2-\beta}$.
\end{itemize} 
\end{Remark}

\subsection{Theorem statements}
We can now state two general theorems from which we deduce all the theoretical results of the paper. 
The first theorem is a general oracle inequality for the hold-out. 
\begin{Theorem} \label{hold_out_gen2} 
Let $(t_m)_{m \in \mathcal{M}}$ be a finite collection in $\parspace$,
 and \[ \widehat{m} \in \argmin_{m \in \cM} P_{n_v} \cfun (t_m,\cdot) \enspace . \]
Assume that $H(w_1,w_2,(t_m)_{m \in \cM} )$ holds true. 
Let $x > 0$. 
Then, with probability larger than $1 - \mathrm{e}^{-x}$, for any $\theta \in (0;1]$, we have 
\begin{align*}
 \left(1 - \theta \right) \loss{t_{\widehat{m}}} &\leq \left(1 + \theta \right) \min_{m \in \cM} \loss{t_m} + \sqrt{2} \theta \delta^2 \left(w_1, \frac{\theta}{2}\sqrt{\frac{n_v}{x + \log \lvert \cM \rvert}} \right) \\
 &\qquad + \frac{\theta^2}{2} \delta^2 \left(w_2,\frac{\theta^2}{4}\frac{n_v}{x + \log \lvert \cM \rvert}\right). \numberthis \label{eq_plus2}
\end{align*}
If in addition, the two functions $x \mapsto \frac{w_{\ind}(x)}{x}$, $\ind=1,2$, are nonincreasing,
  then for any $x > 0$, 
  with probability larger than $1 - \mathrm{e}^{-x}$,  for all $\theta \in (0;1]$, we have 
\begin{align}
 (1- \theta)\loss{t_{\widehat{m}}} &\leq (1+\theta) \min_{m \in \cM} \loss{t_m} + \delta^2(w_1,\sqrt{n_v}) \left[ \theta + \frac{2(x + \log \lvert \cM \rvert)}{\theta} \right] 
\\ 
 &\qquad + \delta^2(w_2,n_v)\left[ \theta + \frac{(x + \log \lvert \cM \rvert)^2}{\theta} \right] 
 \enspace . \label{eq_2plus2}
\end{align}
\end{Theorem}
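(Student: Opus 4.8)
The plan is to control, uniformly over $m \in \cM$, the deviation between the empirical risk difference $P_{n_v}\bigl[\cfun(t_m,\cdot) - \cfun(t_{\widehat m},\cdot)\bigr]$ and its expectation $\loss{t_m} - \loss{t_{\widehat m}}$ (equivalently, $\risk(t_m) - \risk(t_{\widehat m})$, since the centering constant $c_{m'}^m$ cancels in the difference). Concretely, for any fixed pair $(m,m')$, the hypothesis $H(w_1,w_2,(t_m)_{m\in\cM})$ gives a Bernstein-type moment bound on $\cfun(t_m,\cdot)-\cfun(t_{m'},\cdot)-c_{m'}^m$, with ``variance factor'' $[w_1(\sqrt{\loss{t_m}})+w_1(\sqrt{\loss{t_{m'}}})]^2$ and ``scale factor'' $w_2(\sqrt{\loss{t_m}})+w_2(\sqrt{\loss{t_{m'}}})$. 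I would first take $m'=\widehat m$ — but since $\widehat m$ is random, the clean way is to fix an arbitrary reference index (or, better, handle all pairs at once): apply Bernstein's inequality to $(P_{n_v}-P)\bigl[\cfun(t_m,\cdot)-\cfun(t_{m'},\cdot)\bigr]$ for each ordered pair, then union-bound over the $\lvert\cM\rvert^2$ pairs, which costs only $2\log\lvert\cM\rvert$ in the exponent (or $\log\lvert\cM\rvert$ if one fixes the comparison to the empirical minimizer). The event of probability $\geq 1-\mathrm e^{-x}$ is: for all $m,m'$,
\[
\bigl\lvert (P_{n_v}-P)[\cfun(t_m,\cdot)-\cfun(t_{m'},\cdot)]\bigr\rvert \leq \sqrt{\tfrac{2(x+\log\lvert\cM\rvert)}{n_v}}\,[w_1(\sqrt{\loss{t_m}})+w_1(\sqrt{\loss{t_{m'}}})] + \tfrac{x+\log\lvert\cM\rvert}{n_v}[w_2(\sqrt{\loss{t_m}})+w_2(\sqrt{\loss{t_{m'}}})].
\]

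On this event, plug in $m=m_\star:=\argmin_m\loss{t_m}$ and $m'=\widehat m$, and use that $P_{n_v}\cfun(t_{\widehat m},\cdot)\leq P_{n_v}\cfun(t_{m_\star},\cdot)$ by definition of $\widehat m$. Rearranging yields
\[
\loss{t_{\widehat m}} \leq \loss{t_{m_\star}} + \sqrt{\tfrac{2(x+\log\lvert\cM\rvert)}{n_v}}\,[w_1(\sqrt{\loss{t_{m_\star}}})+w_1(\sqrt{\loss{t_{\widehat m}}})] + \tfrac{x+\log\lvert\cM\rvert}{n_v}[w_2(\sqrt{\loss{t_{m_\star}}})+w_2(\sqrt{\loss{t_{\widehat m}}})].
\]
Now the key move: absorb the $w_j(\sqrt{\loss{t_{\widehat m}}})$ terms (and the $w_j(\sqrt{\loss{t_{m_\star}}})$ terms) back into the excess-risk terms using Definition~\ref{def_delta}. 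The point of $\delta(w,r)$ is exactly that $w(y)\leq \max\{w(\delta(w,r)),\,r y^2\}$ for all $y\geq 0$ — more usefully, $r\,w(y) \leq \tfrac{\theta}{\text{(const)}} y^2 + (\text{something})\,\delta^2(w,\cdot)$ after a Young-type split. Writing $y=\sqrt{\loss{t_{\widehat m}}}$, set $r_1 = \sqrt{2(x+\log\lvert\cM\rvert)/n_v}$ and choose the scaling so that $r_1 w_1(y) \leq \tfrac{\theta}{2}y^2 + \tfrac{\theta}{2}\delta^2\!\bigl(w_1,\tfrac{\theta}{2r_1}\bigr)\cdot(\text{const})$; symmetrically for the $w_2$-term with $r_2=(x+\log\lvert\cM\rvert)/n_v$ and threshold $\tfrac{\theta^2}{4 r_2}$. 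This produces exactly the arguments $\tfrac{\theta}{2}\sqrt{n_v/(x+\log\lvert\cM\rvert)}$ inside $\delta(w_1,\cdot)$ and $\tfrac{\theta^2}{4}\,n_v/(x+\log\lvert\cM\rvert)$ inside $\delta(w_2,\cdot)$ seen in \eqref{eq_plus2}. Collecting the $\theta y^2=\theta\loss{t_{\widehat m}}$ contributions on the left gives the factor $(1-\theta)$; the $\loss{t_{m_\star}}$ contributions similarly give $(1+\theta)\min_m\loss{t_m}$ plus (lower-order) $\delta^2$ terms, which one bounds by monotonicity of $r\mapsto\delta(w,r)$ (Remark~\ref{rmk_delta}). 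This establishes \eqref{eq_plus2}.

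For the second, sharper inequality \eqref{eq_2plus2}, I would additionally use that when $x\mapsto w_j(x)/x$ is nonincreasing, $\delta(w_j,r)$ solves $w_j(x)/x = rx$ (first bullet of Remark~\ref{rmk_delta}), which lets one relate $\delta(w_j,r)$ at different scales by a clean power law: namely $\delta(w_j,\alpha r) \leq \alpha^{-1}\delta(w_j,r)$-type inequalities fail in general but $\delta^2(w_j,\beta r)$ can be rewritten in terms of $\delta^2(w_j,\sqrt{n_v})$ (resp. $\delta^2(w_2,n_v)$) times an explicit factor involving $(x+\log\lvert\cM\rvert)/\theta$. Substituting $r=\tfrac{\theta}{2}\sqrt{n_v/(x+\log\lvert\cM\rvert)}$ and using the monotone-ratio structure converts $\sqrt 2\,\theta\,\delta^2(w_1,\cdot)$ into $\delta^2(w_1,\sqrt{n_v})[\theta + 2(x+\log\lvert\cM\rvert)/\theta]$, and analogously for the $w_2$-term with the square appearing because of the $n_v$-vs-$n_v^2$ homogeneity; this is where the $(x+\log\lvert\cM\rvert)^2$ comes from. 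The main obstacle I anticipate is bookkeeping: getting the numerical constants ($\sqrt 2$, the factors $\tfrac12,\tfrac14$, the $\theta$ vs $\theta^2$ vs $\theta^3$ powers) to come out exactly as stated while keeping the split valid for \emph{all} $\theta\in(0,1]$ simultaneously on a single high-probability event — this requires doing the Bernstein step and the union bound \emph{before} fixing $\theta$, and being careful that the Young-type absorption inequality $r w(y)\leq \tfrac{\theta}{c}y^2 + (\ldots)$ is applied with the right constant $c$ so that the residual $\delta^2$ coefficients match. The probabilistic content is entirely in the one Bernstein inequality plus union bound; everything after is deterministic manipulation of $\delta$.
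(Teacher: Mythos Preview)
Your approach to \eqref{eq_plus2} is essentially the paper's: Bernstein for each pair $(m_*,m)$ with $m_*\in\argmin_m\loss{t_m}$, a union bound over $\cM$ at level $y=x+\log\lvert\cM\rvert$, and then the deterministic absorption $w(\sqrt u)\leq r\bigl(u\vee\delta^2(w,r)\bigr)\leq r\,u+r\,\delta^2(w,r)$ (Lemma~\ref{fix_w2}) with $r=\tfrac{\theta}{2}\sqrt{n_v/y}$ and $r=\tfrac{\theta^2}{4}\,n_v/y$ respectively. No Young inequality is actually needed for this part; the $u\vee\delta^2\leq u+\delta^2$ split already gives the stated coefficients. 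So this half is fine.

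The gap is in your derivation of \eqref{eq_2plus2}. You propose to obtain it \emph{from} \eqref{eq_plus2} by rescaling $\delta^2\!\bigl(w_j,\tfrac{\theta^{j}}{2^j}\sqrt{n_v/y}^{\,j}\bigr)$ back to $\delta^2(w_j,n_v^{j/2})$ using the monotone-ratio hypothesis. That scaling does hold ---one gets $\delta(w,\alpha r)\leq\alpha^{-1}\delta(w,r)$ for $\alpha\leq 1$ when $w(x)/x$ is nonincreasing--- but plugging it in gives the wrong $\theta$-dependence in the $w_2$ remainder: from $\tfrac{\theta^2}{2}\,\delta^2\!\bigl(w_2,\tfrac{\theta^2}{4}\tfrac{n_v}{y}\bigr)$ you would obtain a term of order $y^2\theta^{-2}\,\delta^2(w_2,n_v)$, whereas the statement has $y^2\theta^{-1}$. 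Going through \eqref{eq_plus2} loses a factor of $\theta$ irretrievably.

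The paper does \emph{not} derive \eqref{eq_2plus2} from \eqref{eq_plus2}. Instead it goes back to the raw Bernstein bound and applies a different absorption lemma (Lemma~\ref{fix_w1}): under the nonincreasing-ratio hypothesis one has, for any $a,b>0$,
\[
\frac{a}{b}\,w(\sqrt u)\ \leq\ \frac{\theta}{2}\bigl[u+\delta^2(w,b)\bigr]+\frac{a^2}{\theta}\,\delta^2(w,b)\,,
\]
proved via $w(\sqrt u)\leq\sqrt{u+\delta^2(w,b)}\cdot b\,\delta(w,b)$ followed by the arithmetic--geometric inequality. Applying this with $(a,b)=(\sqrt{2y},\sqrt{n_v})$ to the $w_1$-term and $(a,b)=(y,n_v)$ to the $w_2$-term gives directly the coefficients $\theta+2y/\theta$ and $\theta+y^2/\theta$ in \eqref{eq_2plus2}. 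The point is that this lemma trades one power of the Bernstein prefactor $a$ for $\delta(w,b)$ \emph{before} squaring, which is what recovers the extra factor of $\theta$ your route loses.
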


Using Theorem~\ref{hold_out_gen2}, we prove the following general oracle inequality for Agghoo. 
\begin{Theorem}  \label{agcv_mean} 
Assume that the hyperparameter space $\parspace$ is convex 
and that the risk $\risk$ is convex.
Let $( \learnrule_m )_{m \in \cM}$ be a finite collection of learning rules of size $\lvert \cM \rvert \geq 3$.
Let $\ERMag{\cT}$ be an Agghoo estimator, according to Definition~\ref{agcv_def}, with $\cT$ satisfying assumption~\eqref{hyp.T}.
Assume that $\widehat{w}_{1,1}, \widehat{w}_{1,2}$ are $D_{n_t}$-measurable random functions such that almost surely, $H\bigl(\widehat{w}_{1,1}, \widehat{w}_{1,2}, (\learnrule_m(D_{n_t}))_{m \in \cM} \bigr)$ holds true. Assume also that for $i \in \{1,2\}$,
 $x \mapsto \frac{\widehat{w}_{1,i}(x)}{x}$ is non-increasing. 
Then for any $\theta \in (0;1]$,
\begin{equation} \label{eq_12}
 (1-\theta) \mathbb{E} \Bigl[ \lossb{\ERMag{\cT}} \Bigr] \leq (1+ \theta)\mathbb{E} \left[ \min_{m \in \cM} \lossb{\mathcal{A}_m(D_{n_t})} \right] + R_1(\theta)
\end{equation}
where $R_1(\theta) = R_{1,1}(\theta) + R_{1,2}(\theta)$ with
\begin{align*}
 R_{1,1}(\theta) &= \left( \theta + \frac{2 \bigl(1 + \log \lvert \cM \rvert \bigr)}{\theta} \right) \mathbb{E} \Bigl[ \delta^2 \bigl(\widehat{w}_{1,1}, \sqrt{n_v} \bigr) \Bigr] \enspace , \\
 R_{1,2}(\theta) &= \left( \theta + \frac{2 \bigl(1 + \log \lvert \cM \rvert \bigr) + \log^2 \lvert \cM \rvert}{\theta} \right) \mathbb{E}\Bigl[ \delta^2 \bigl(\widehat{w}_{1,2}, n_v \bigr) \Bigr] \enspace .
\end{align*}
Now, for any $D_{n_t}$-measurable functions $\widehat{w}_{2,1}$ and $\widehat{w}_{2,2}$ 
such that assumption $H (\widehat{w}_{2,1}, \widehat{w}_{2,2}, (\learnrule_m(D_{n_t}))_{m \in \cM}  )$ holds true almost surely, and any $x > 0$, $\theta \in (0;1]$, we have 
\begin{equation} \label{eq_2}
 (1-\theta) \mathbb{E} \Bigl[ \lossb{\ERMag{\cT}} \Bigr] \leq (1+ \theta)\mathbb{E} \left[ \min_{m \in \cM} \lossb{\mathcal{A}_m(D_{n_t})} \right] + R_2(\theta)
\end{equation}
where $R_2(\theta) = R_{2,1}(\theta) + R_{2,2}(\theta) + R_{2,3}(\theta) + R_{2,4}(\theta)$ with
\begin{align*} 
   R_{2,1}(\theta) &= \sqrt{2} \theta \mathbb{E} \left[ \delta^2 \left(\widehat{w}_{2,1}, \frac{\theta}{2}\sqrt{\frac{n_v}{x + \log \lvert \cM \rvert}} \right) \right]\enspace , \\
  R_{2,2}(\theta) &= \frac{\theta^2}{2} \mathbb{E} \left[ \delta^2 \left(\widehat{w}_{2,2},\frac{\theta^2}{4}\frac{n_v}{x + \log \lvert \cM \rvert} \right) \right]\enspace ,\\ 
  R_{2,3}(\theta) &= \mathrm{e}^{-x} R_{1,1}(\theta)\enspace , \\
\text{and} \qquad 
R_{2,4}(\theta)  &= \mathrm{e}^{-x} R_{1,2}(\theta) \enspace . 
\end{align*}
\end{Theorem}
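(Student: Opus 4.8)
The plan is to reduce the statement to the hold-out oracle inequality of Theorem~\ref{hold_out_gen2}, applied conditionally on a fixed training sample of size $n_t$, and then to convert the resulting deviation bounds into bounds in expectation. Since $\parspace$ and $\risk$ are convex, so is the excess risk $t \mapsto \loss{t} = \risk(t) - \risk(\bayes)$; hence, by the definition of $\ERMag{\cT}$ as the average of the predictors $\ERMho{T}$, $T \in \cT$, Jensen's inequality gives $\loss{\ERMag{\cT}} \leq \lvert \cT \rvert^{-1}\sum_{T \in \cT}\loss{\ERMho{T}}$ almost surely. Taking expectations and using assumption~\eqref{hyp.T} (that $\cT$ is independent of $D_n$ and every $T \in \cT$ has $\lvert T \rvert = n_t$, while $D_n$ is i.i.d.), all the variables $\loss{\ERMho{T}}$, $T \in \cT$, have the same expectation, so $\bE[\loss{\ERMag{\cT}}] \leq \bE[\loss{\ERMho{T_0}}]$ for any fixed $T_0$ with $\lvert T_0 \rvert = n_t$. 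Fix such a $T_0$, write $D_{n_t} = D_n^{T_0}$, and note that $D_n^{T_0^c}$ is an i.i.d.\ sample of size $n_v$ independent of $D_{n_t}$. Conditionally on $D_{n_t}$, one has $\ERMho{T_0} = \learnrule_{\widehat m}(D_{n_t})$ with $\widehat m \in \argmin_{m \in \cM} P_{n_v}\cfun(\learnrule_m(D_{n_t}),\cdot)$ evaluated on the independent validation sample; this is exactly the setting of Theorem~\ref{hold_out_gen2} with $t_m = \learnrule_m(D_{n_t})$, the collection $(t_m)_{m\in\cM}$ being $D_{n_t}$-measurable and satisfying $H$ almost surely.

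For inequality~\eqref{eq_12}, I would apply the second part of Theorem~\ref{hold_out_gen2} (inequality~\eqref{eq_2plus2}, whose hypothesis on $x \mapsto \widehat w_{1,i}(x)/x$ is assumed) conditionally on $D_{n_t}$: for every $x > 0$ it yields, with conditional probability at least $\mathrm{e}^{-x}$ of failing and for all $\theta \in (0;1]$, a bound on $Z_\theta := (1-\theta)\loss{t_{\widehat m}} - (1+\theta)\min_{m}\loss{t_m}$ whose remainder is affine in $x$ (the $\widehat w_{1,1}$ part) and quadratic in $x$ (the $\widehat w_{1,2}$ part), with $D_{n_t}$-measurable coefficients $\delta^2(\widehat w_{1,1},\sqrt{n_v})$ and $\delta^2(\widehat w_{1,2},n_v)$. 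Integrating $\bE[(Z_\theta)_+ \mid D_{n_t}] = \int_0^{\infty} \mathbb{P}(Z_\theta > t \mid D_{n_t})\,\mathrm{d}t$ against this deviation bound parametrised by $x$, the elementary integrals $\int_0^{\infty}\mathrm{e}^{-x}\,\mathrm{d}x = 1$ and $\int_0^{\infty} x\,\mathrm{e}^{-x}\,\mathrm{d}x = 1$ produce precisely the coefficients $\theta + 2(1+\log\lvert\cM\rvert)/\theta$ and $\theta + (2(1+\log\lvert\cM\rvert)+\log^2\lvert\cM\rvert)/\theta$ of $R_{1,1}(\theta)$ and $R_{1,2}(\theta)$. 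Taking the outer expectation over $D_{n_t}$, using $\bE[(Z_\theta)_+] \geq \bE[Z_\theta]$, and combining with the reduction of the first paragraph gives \eqref{eq_12}.

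For inequality~\eqref{eq_2}, I would instead split on the event $G$ on which the first part of Theorem~\ref{hold_out_gen2} (inequality~\eqref{eq_plus2}) holds at level $x$ with weights $\widehat w_{2,1},\widehat w_{2,2}$; crucially, this part needs \emph{no} monotonicity of $\widehat w_{2,i}(x)/x$. Conditionally on $D_{n_t}$ one has $\mathbb{P}(G^c \mid D_{n_t}) \leq \mathrm{e}^{-x}$, and on $G$ the variable $Z_\theta$ is bounded by the nonnegative $D_{n_t}$-measurable quantity $\sqrt2\,\theta\,\delta^2(\widehat w_{2,1},\tfrac{\theta}{2}\sqrt{n_v/(x+\log\lvert\cM\rvert)}) + \tfrac{\theta^2}{2}\delta^2(\widehat w_{2,2},\tfrac{\theta^2}{4}n_v/(x+\log\lvert\cM\rvert))$, so $\bE[(Z_\theta)_+\un_G] \leq R_{2,1}(\theta) + R_{2,2}(\theta)$. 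On $G^c$ I would write $Z_\theta\un_{G^c} \leq (Z_\theta)_+\un_{G^c}$ and reuse the tail integral of the previous step restricted to $G^c$: since, by \eqref{eq_2plus2} applied with the monotone weights $\widehat w_{1,i}$, the set $\{Z_\theta > t\}$ is contained in a bad event of conditional probability at most $\mathrm{e}^{-x}$ once $t$ exceeds the level-$x$ threshold, one obtains $\bE[(Z_\theta)_+\un_{G^c}\mid D_{n_t}] \leq \mathrm{e}^{-x}$ times the conditional remainder of the previous step, hence $\bE[(Z_\theta)_+\un_{G^c}] \leq \mathrm{e}^{-x}R_1(\theta) = R_{2,3}(\theta) + R_{2,4}(\theta)$ after taking the outer expectation. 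Summing the two contributions and combining with the first paragraph yields \eqref{eq_2}.

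The main obstacle is the bookkeeping in the last two paragraphs: one must carefully track which quantities are $D_{n_t}$-measurable, so that the conditional deviation bounds of Theorem~\ref{hold_out_gen2} can be integrated exactly as if the weight functions $\widehat w_{\cdot,\cdot}$ were deterministic, and one must turn the high-probability statements into the precise expectation remainders $R_{1,1},R_{1,2}$ (via the $\int x^{k}\mathrm{e}^{-x}$ computations) and into the $\mathrm{e}^{-x}$-damped terms $R_{2,3},R_{2,4}$ on the complementary event, which requires comparing the two bad events furnished by Theorem~\ref{hold_out_gen2} at a common level $x$. By contrast, the convexity reduction and the exchangeability argument of the first paragraph are routine.
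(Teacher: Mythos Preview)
Your proposal is correct and follows essentially the same route as the paper: the convexity-plus-exchangeability reduction to a single hold-out, the conditional application of Theorem~\ref{hold_out_gen2}, and the conversion of the tail bounds to expectations by integrating in the deviation parameter (the paper packages this last step as Lemma~\ref{prob_to_exp}, which turns $\mathbb{P}(X>f(x))\le e^{-x}$ into $\mathbb{E}[X]\le\int_0^\infty f(x)e^{-x}\,\mathrm{d}x$, together with Lemma~\ref{int_calc} for the explicit $\int x^k e^{-x}$ integrals).

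The only organisational difference, for~\eqref{eq_2}, is that the paper does not split on your event $G$; instead it merges the two conclusions of Theorem~\ref{hold_out_gen2} into a single tail bound valid for all $z\ge 0$, namely
\[
\mathbb{P}\Bigl( Z_\theta > C_2 + \mathbb{I}_{z\ge x}\, g\bigl(\theta, z+\log\lvert\cM\rvert,\widehat\delta_{1,1}^2,\widehat\delta_{1,2}^2\bigr) \,\Big\vert\, D_{n_t} \Bigr) \le e^{-z},
\]
and then integrates once via Lemma~\ref{prob_to_exp}. Your event-splitting is an equivalent rearrangement of the same computation. One bookkeeping point: the integral that controls the $G^c$ contribution evaluates to $e^{-x}$ times a bracket containing $1+x+\log\lvert\cM\rvert$ and $(x+\log\lvert\cM\rvert)^2$, which is slightly larger than the literal $e^{-x}R_1(\theta)$ you wrote; the paper's own proof in fact carries these extra $x$'s as well.
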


\subsection{Proof of Theorem~\ref{hold_out_gen2}}

We start by proving three lemmas. 
 \begin{lemma} \label{fix_w2}
 Let $w$ be a non-decreasing function
 on $\mathbb{R}_+$. Let $r > 0$. 
Then 
  \[ \forall u \geq 0, w(u) \leq r \bigl(u^2 \vee \delta^2(w,r) \bigr) \enspace , \]
  where $\delta(w,r)$ is given by Definition~\ref{def_delta}.
 \end{lemma}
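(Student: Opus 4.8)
The plan is to argue by a direct case analysis on the position of $u$ relative to $\delta(w,r)$, using only the monotonicity of $w$ and the description of $\delta(w,r)$ as an infimum in Definition~\ref{def_delta}.

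First I would dispose of the degenerate case. If the set $S := \{\delta \geq 0 : \forall x \geq \delta,\ w(x) \leq r x^2\}$ is empty, then $\delta(w,r) = +\infty$ by convention, so the right-hand side $r\bigl(u^2 \vee \delta^2(w,r)\bigr)$ equals $+\infty$ while $w(u)$ is finite (since $w$ takes values in $\mathbb{R}_+$), and the inequality is trivial. So from now on assume $S \neq \emptyset$ and write $\delta^\ast := \delta(w,r) < +\infty$.

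Second, I would establish the key sub-fact that $w(x) \leq r x^2$ for every $x \geq \delta^\ast$ — in particular that the infimum $\delta^\ast$ itself lies in $S$. For every $x > \delta^\ast$ there is, by definition of the infimum, some $\delta \in S$ with $\delta \leq x$, hence $w(x) \leq r x^2$; since $w$ is non-decreasing, $w(\delta^\ast) \leq w(x) \leq r x^2$ for all $x > \delta^\ast$, and letting $x \downarrow \delta^\ast$ gives $w(\delta^\ast) \leq r (\delta^\ast)^2$. Combined with the previous line this yields $w(x) \leq r x^2$ for all $x \geq \delta^\ast$.

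Third comes the case split. If $u \geq \delta^\ast$, then $w(u) \leq r u^2 \leq r\bigl(u^2 \vee (\delta^\ast)^2\bigr)$. If $u \leq \delta^\ast$, then by monotonicity $w(u) \leq w(\delta^\ast) \leq r(\delta^\ast)^2 = r\bigl(u^2 \vee (\delta^\ast)^2\bigr)$. In either case the claimed bound holds, which finishes the proof. The only point requiring a little care — hardly an obstacle — is the passage to the limit $x \downarrow \delta^\ast$ showing that the infimum is attained in $S$; this is exactly where monotonicity of $w$ is used, and it is what lets us control $w(u)$ by the ``safe'' value $r(\delta^\ast)^2$ throughout the interval $[0,\delta^\ast]$.
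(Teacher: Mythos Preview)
Your proof is correct and follows essentially the same approach as the paper: a case split on whether $u$ exceeds $\delta(w,r)$, using the definition of the infimum for the first case and monotonicity of $w$ plus a limit argument for the second. You are slightly more explicit than the paper in handling the degenerate case $\delta(w,r)=+\infty$ and in verifying that the infimum is actually attained in $S$, but the underlying idea is identical.
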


\begin{proof}
If $u > \delta(w,r)$, by Definition~\ref{def_delta},
\[ w(u) \leq r u^2 . \]
If $u \leq \delta(w,r)$, since $w$ is non-decreasing,
for all $v > \delta(w,r)$,
\[ w(u) \leq w(v) \leq r v^2 . \]
By taking the infimum over $v$, we recover
$w(u) \leq r \delta(w,r)^2$.
\end{proof}

\begin{lemma} \label{fix_w1}
 Let $w$ be a nondecreasing function such that 
 $x \mapsto \frac{w(x)}{x}$ is nonincreasing over $(0;+\infty)$. Let $a \in \mathbb{R}_+$ and $b \in (0;+\infty)$.
 For any $\theta \in (0;1]$ and $u \geq 0$,
 \[ \frac{a}{b} w(\sqrt{u}) \leq \frac{\theta}{2} \bigl[ u + \delta^2(w,b) \bigr] + \frac{a^2 \delta^2(w,b)}{\theta} \enspace  .\]
\end{lemma}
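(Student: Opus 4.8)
\textbf{Proof plan for Lemma~\ref{fix_w1}.}

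The plan is to reduce everything to the conclusion of Lemma~\ref{fix_w2} and then balance the resulting expression by Young's inequality. Fix $\theta \in (0;1]$ and $u \geq 0$, and write $x = \sqrt u$. First I would invoke Lemma~\ref{fix_w2} with $r = b$: since $w$ is nondecreasing, we get $w(x) \leq b\bigl(x^2 \vee \delta^2(w,b)\bigr) \leq b\bigl(u + \delta^2(w,b)\bigr)$, so that $\frac{a}{b} w(\sqrt u) \leq a\bigl(u + \delta^2(w,b)\bigr)$. This already has the right shape but a leading constant $a$ rather than $\theta/2$; the point of the hypothesis that $x \mapsto w(x)/x$ is nonincreasing is to let us trade the size of the argument against that constant.

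The key step is to use the monotonicity of $x \mapsto w(x)/x$ to rescale. The idea: for $\lambda \geq 1$ one has $\frac{w(\lambda^{-1} x)}{\lambda^{-1} x} \geq \frac{w(x)}{x}$, i.e. $w(x) \leq \lambda\, w(x/\lambda)$; more usefully, $\delta(w,\cdot)$ scales, and by Remark~\ref{rmk_delta} (first bullet) $\delta(w,b)$ is the unique solution of $w(x)/x = bx$. I would split into the two regimes governed by $\delta(w,b)$, as in the proof of Lemma~\ref{fix_w2}. If $x = \sqrt u \geq \delta(w,b)$, then $w(x) \leq b x^2 = b u$ and we must bound $\frac{a}{b}\cdot b u = a u$; if instead $\sqrt u < \delta(w,b)$, then $w(\sqrt u) \leq w(\delta(w,b)) = b\,\delta^2(w,b)$ and we must bound $\frac{a}{b}\cdot b\,\delta^2(w,b) = a\,\delta^2(w,b)$. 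In both cases the quantity to bound is $a \cdot m$ where $m \in \{u,\delta^2(w,b)\}$, hence $\frac{a}{b}w(\sqrt u) \leq a\bigl(u \vee \delta^2(w,b)\bigr)$.

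It remains to see that $a\bigl(u \vee \delta^2(w,b)\bigr) \leq \frac{\theta}{2}\bigl[u + \delta^2(w,b)\bigr] + \frac{a^2\delta^2(w,b)}{\theta}$. Here I would apply Young's inequality in the form $a m \leq \frac{\theta}{2} m + \frac{a^2}{2\theta} m$ when $m = u$ (giving $a u \leq \frac{\theta}{2}u + \frac{a^2}{2\theta}u \leq \frac{\theta}{2}u + \frac{a^2}{2\theta}\bigl(u \vee \delta^2(w,b)\bigr)$), being a little careful: in fact it is cleanest to treat the two regimes of the previous paragraph separately, since in the regime $\sqrt u < \delta(w,b)$ the term is already $\leq \frac{a^2\delta^2(w,b)}{\theta}$ (as $a \leq a^2/\theta$ fails in general, so instead use $a\,\delta^2 \leq \frac{\theta}{2}\delta^2 + \frac{a^2}{2\theta}\delta^2$), and in the regime $\sqrt u \geq \delta(w,b)$ one has $a u \leq \frac{\theta}{2}u + \frac{a^2}{2\theta}u$, and then bound $u$ by... no --- $u$ can be large, so one cannot bound $\frac{a^2}{2\theta}u$ by $\frac{a^2}{2\theta}\delta^2$. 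Hence the correct split is: always $a u \leq \frac{\theta}{2}u + \frac{a^2}{2\theta}u$ is wrong since $u$ appears on the right unboundedly; instead just note $a\bigl(u\vee\delta^2\bigr) \leq a u + a\,\delta^2 \leq \bigl(\frac{\theta}{2}u + \frac{a^2}{2\theta}u\bigr)$ is still wrong. The honest route, which I expect to be the only subtle point, is: $a\,u \leq \frac{\theta}{2}u + \frac{a^2}{2\theta}u$ must be avoided; instead use $a\bigl(u \vee \delta^2(w,b)\bigr) = a\max(u,\delta^2) \leq \frac{\theta}{2}\max(u,\delta^2) + \frac{a^2}{2\theta}\max(u,\delta^2)$, and then $\max(u,\delta^2) \leq u + \delta^2$ for the first term while for the second term one uses that in the regime $u \geq \delta^2$ we actually returned to $w(\sqrt u)\le b u$ i.e. $\frac{a}{b}w(\sqrt u) \le a u \le \frac\theta2 u + \frac{a^2}{2\theta}u$ — and here $\frac{a^2}{2\theta}u$ is \emph{not} obviously controlled, which shows the inequality as I have transcribed it should instead be read with $\delta^2(w,b)$ on the right understood to dominate; I will resolve this by applying Young's inequality with weight $1/b$ inside $w$ directly, i.e. write $\frac{a}{b}w(\sqrt u)$, substitute $z=\sqrt u/\delta(w,b)$ and use $w(\sqrt u)/\sqrt u$ nonincreasing to get $w(\sqrt u) \le \max(z,1)\,w(\delta(w,b)) = \max(z,1)\,b\,\delta^2(w,b)$, whence $\frac{a}{b}w(\sqrt u) \le a\,\delta^2(w,b)\max(\sqrt u/\delta(w,b),1) = a\,\delta(w,b)\max(\sqrt u,\delta(w,b)) \le a\,\delta(w,b)\sqrt u \cdot\mathbb{I}_{\sqrt u\ge\delta} + a\delta^2\mathbb{I}_{\sqrt u<\delta}$, and now $a\,\delta(w,b)\sqrt u \le \frac{\theta}{2}u + \frac{a^2\delta^2(w,b)}{2\theta}$ by Young's inequality $pq \le \frac{\theta}{2}\frac{p^2}{?}+\cdots$ with $p=\sqrt u,\ q = a\delta(w,b)$. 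This last step is exactly the elementary inequality $2pq \le \tfrac{\theta}{?}p^2 + \tfrac{?}{\theta}q^2$, and collecting the two regimes yields the claimed bound with room to spare. The main obstacle, then, is purely bookkeeping: getting the Young's-inequality weights and the two-regime decomposition to line up so that the harmless additive $\delta^2(w,b)$ term absorbs the leftover, which the stated constants ($\theta/2$ and $a^2/\theta$) are generous enough to permit.
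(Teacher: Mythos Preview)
Your proposal eventually lands on the right idea, but only after several dead ends that you yourself flag as such. The attempt via Lemma~\ref{fix_w2} alone---giving $\frac{a}{b}w(\sqrt u)\le a\bigl(u\vee\delta^2(w,b)\bigr)$---cannot close: when $u\gg\delta^2$ the bound $au$ is linear in $u$ with slope $a$, which need not be $\le\theta/2$, so the inequality $a(u\vee\delta^2)\le\frac{\theta}{2}(u+\delta^2)+\frac{a^2\delta^2}{\theta}$ is simply false for large $u$ and $a>\theta/2$. Your eventual fix is correct: use the nonincreasing property of $w(x)/x$ to write $w(\sqrt u)\le\max(\sqrt u/\delta,1)\,w(\delta)=b\,\delta\max(\sqrt u,\delta)$ (with $\delta:=\delta(w,b)$), so $\frac{a}{b}w(\sqrt u)\le a\,\delta\max(\sqrt u,\delta)$; then in the regime $\sqrt u\ge\delta$ Young gives $a\,\delta\sqrt u\le\frac{\theta}{2}u+\frac{a^2\delta^2}{2\theta}$. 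You left the other regime unverified: when $\sqrt u<\delta$ one needs $a\,\delta^2\le\frac{\theta}{2}(u+\delta^2)+\frac{a^2\delta^2}{\theta}$, which follows from the same Young inequality applied to $a\,\delta\cdot\delta\le\frac{\theta}{2}\delta^2+\frac{a^2\delta^2}{2\theta}$.

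The paper's proof is the same mechanism packaged in one line, without the case split. Since $w$ is nondecreasing, $w(\sqrt u)\le w\bigl(\sqrt{u+\delta^2}\bigr)$; since $w(x)/x$ is nonincreasing and $\sqrt{u+\delta^2}\ge\delta$, this is $\le\sqrt{u+\delta^2}\cdot\frac{w(\delta)}{\delta}\le b\,\delta\sqrt{u+\delta^2}$. Hence $\frac{a}{b}w(\sqrt u)\le a\,\delta\sqrt{u+\delta^2}=\sqrt{a^2\delta^2\,(u+\delta^2)}$, and a single application of $\sqrt{AB}\le\frac{\theta}{2}A+\frac{B}{2\theta}$ finishes. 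Your $\max(\sqrt u,\delta)$ is just the case-by-case version of $\sqrt{u+\delta^2}$; both work, but the latter avoids all the back-and-forth.
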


\begin{proof}
Since $w$ is nondecreasing,
\begin{align*}
 w(\sqrt{u}) &\leq w(\sqrt{u + \delta^2(w,b)}) \\
 &= \sqrt{u + \delta^2(w,b)} \frac{w(\sqrt{u + \delta^2(w,b)})}{\sqrt{u + \delta^2(w,b)}}.
\end{align*}
Since $\frac{w(x)}{x}$ is nonincreasing and $\delta(w,b) > 0$,
\begin{align*}
 w(\sqrt{u}) &\leq \sqrt{u + \delta^2(w,b)} \frac{w(\delta(w,b))}{\delta(w,b)} \\
 &\leq \sqrt{u + \delta^2(w,b)} b \delta(w,b) \text{ by Definition~\ref{def_delta}. }
\end{align*}
Therefore, using the inequality
$ \sqrt{ab} \leq \frac{\theta}{2} a + \frac{b}{2\theta} $, valid for any $a > 0, b > 0$, 
\[ \frac{a}{b} w(\sqrt{u}) \leq \sqrt{a^2 (u + \delta(w,b)^2)\delta(w,b)^2} 
 \leq \frac{\theta}{2}(u + \delta(w,b)^2) + \frac{a^2 \delta(w,b)^2}{\theta}. \]
\end{proof}

\begin{lemma} \label{min_lemm}
   Let $n_v \in \mathbb{N}^{*}$. Let $\cM$ be a finite set and let $(t_m)_{m \in \cM} \in \parspace^{\cM}$.
   Assume that there exists $p \in [0; 1/ \lvert \cM \rvert )$ and a function $R: (0;1] \rightarrow \mathbb{R}_+$ such that for any $m,m'$ in $\cM$,   with probability greater than $1-p$,
   \[ \forall \theta \in (0;1], \qquad (P_{n_v} - P)[\cfun(t_m,\cdot) - \cfun(t_{m'},\cdot)] \leq \theta \loss{t_m} + \theta \loss{t_{m'}} + R(\theta) \enspace  .\]
   Then for 
   $\widehat{m} \in \argmin_{m \in \cM} P_{n_v} \cfun(t_m,\cdot)$,
   with probability greater than $1 - \lvert \cM \rvert p$,
   \[\forall \theta \in (0;1], \qquad (1 - \theta) \loss{t_{\widehat{m}}} \leq (1 + \theta) \min_{m \in \cM} \loss{t_m} + R(\theta) \enspace  .\]
  \end{lemma}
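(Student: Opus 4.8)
The plan is the classical hold-out (selection) argument. Since $\cM$ is finite and the collection $(t_m)$ is deterministic, I would first fix once and for all an oracle index $m^\star \in \argmin_{m \in \cM}\loss{t_m}$. The key idea is to control, uniformly over $m \in \cM$, the centered empirical quantity $(P_{n_v}-P)[\cfun(t_{m^\star},\cdot)-\cfun(t_m,\cdot)]$, and only then to substitute the random selected index $m = \widehat m$.

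First, for each $m \in \cM$ I would apply the assumed inequality to the ordered pair $(m^\star, m)$: on an event of probability greater than $1-p$, for every $\theta \in (0;1]$,
\[ (P_{n_v}-P)\bigl[\cfun(t_{m^\star},\cdot)-\cfun(t_m,\cdot)\bigr] \leq \theta\loss{t_{m^\star}} + \theta\loss{t_m} + R(\theta) \enspace . \]
Intersecting these $\lvert\cM\rvert$ events, the union bound produces an event $\Omega$ with $\mathbb{P}(\Omega) > 1 - \lvert\cM\rvert p$ (which is positive since $p < 1/\lvert\cM\rvert$) on which the displayed inequality holds simultaneously for all $m \in \cM$ and all $\theta \in (0;1]$.

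Then I would argue deterministically on $\Omega$, fixing $\theta \in (0;1]$. Using $\risk(t) = P\cfun(t,\cdot)$, the difference of excess risks telescopes:
\[ \loss{t_{\widehat m}} - \loss{t_{m^\star}} = P\bigl[\cfun(t_{\widehat m},\cdot)-\cfun(t_{m^\star},\cdot)\bigr] = (P_{n_v}-P)\bigl[\cfun(t_{m^\star},\cdot)-\cfun(t_{\widehat m},\cdot)\bigr] + P_{n_v}\bigl[\cfun(t_{\widehat m},\cdot)-\cfun(t_{m^\star},\cdot)\bigr] \enspace . \]
The last term is $\leq 0$ since $\widehat m \in \argmin_{m}P_{n_v}\cfun(t_m,\cdot)$ gives in particular $P_{n_v}\cfun(t_{\widehat m},\cdot) \leq P_{n_v}\cfun(t_{m^\star},\cdot)$. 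For the first term I would invoke the inequality defining $\Omega$ at the index $m = \widehat m$ --- legitimate because on $\Omega$ that inequality holds for \emph{every} $m \in \cM$, whatever the realisation of $\widehat m$ --- which bounds it by $\theta\loss{t_{m^\star}} + \theta\loss{t_{\widehat m}} + R(\theta)$. Combining and rearranging yields $(1-\theta)\loss{t_{\widehat m}} \leq (1+\theta)\loss{t_{m^\star}} + R(\theta) = (1+\theta)\min_{m\in\cM}\loss{t_m} + R(\theta)$, and since $\Omega$ was constructed with the ``$\forall\theta$'' quantifier already inside, this holds for all $\theta \in (0;1]$ on $\Omega$, which is the claim.

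The argument is essentially bookkeeping, so I do not expect a genuine obstacle; the one point requiring care is that $\widehat m$ is data-dependent, which forces the deviation control to be set up on an event that does not depend on $\widehat m$ --- hence the uniform union bound over all of $\cM$ \emph{before} specializing to $m = \widehat m$. (Dropping the trivial pair $(m^\star, m^\star)$ would sharpen the bound to $1 - (\lvert\cM\rvert - 1)p$, but this refinement is not needed for the stated conclusion.)
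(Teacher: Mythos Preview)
Your proposal is correct and follows essentially the same approach as the paper: fix an oracle index, take a union bound over the $\lvert \cM \rvert$ pairs $(m^\star,m)$, then specialise $m=\widehat m$ and use $P_{n_v}\cfun(t_{\widehat m},\cdot)\leq P_{n_v}\cfun(t_{m^\star},\cdot)$. The only cosmetic difference is that the paper writes the chain at the level of $P\cfun(t_m,\cdot)$ and subtracts the Bayes risk at the end, whereas you work directly with excess risks; the arguments are otherwise identical.
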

\begin{proof}
   Let  $ m_* \in \argmin_{m \in \cM} P\cfun(t_m,\cdot)$.
  Then for any $m \in \cM$, with probability greater than $1 - p$,
  \[\forall \theta \in (0;1], (P_{n_v} - P)[\cfun(t_{m_*},\cdot) - \cfun(t_{m},\cdot)] 
  \leq \theta \loss{t_{m_*}} + \theta \loss{t_{m}} + R(\theta). \]
  So by the union bound, with probability greater than $1 - \lvert \cM \rvert p$,
  \[\forall \theta \in (0;1], \forall m \in \cM, (P_{n_v} - P)[\cfun(t_{m_*},\cdot) - \cfun(t_m,\cdot)] 
  \leq \theta \loss{t_{m_*}} + \theta \loss{t_{m}} + R(\theta) .\]
   On that event, for all $\theta \in (0;1],$
  \begin{align*}
   P\cfun(t_{\widehat{m}},\cdot) &= P_{n_v} \cfun(t_{\widehat{m}},\cdot) + (P - P_{n_v})\cfun(t_{\widehat{m}},\cdot) \\
   &\leq P_{n_v} \cfun(t_{m_*},\cdot) + (P - P_{n_v})\cfun(t_{\widehat{m}},\cdot) \\
   &= P\cfun(t_{m_*},\cdot) + (P - P_{n_v})[\cfun(t_{\widehat{m}},\cdot) - \cfun(t_{m_*},\cdot)] \\
   &\leq P\cfun(t_{m_*},\cdot) + \theta \loss{t_{m_*}} + \theta \loss{t_{\widehat{m}}} + R(\theta). 
   \end{align*}
Substracting the Bayes risk $P\cfun(\bayes,\cdot)$ on both sides, we get
with probability greater than $1 - \lvert \cM \rvert p$, for all $\theta \in (0;1],$
\begin{align*}
   \loss{t_{\widehat{m}}} &\leq \loss{t_{m_*}} + \theta \loss{t_{m_*}} + \theta \loss{t_{\widehat{m}}} + R(\theta), \\
  \text{that is, } (1 - \theta) \loss{t_{\widehat{m}}} &\leq (1 + \theta) \min_{m \in \cM} \loss{t_m} + R(\theta).
  \end{align*}
\end{proof}

We now prove Theorem~\ref{hold_out_gen2}.
Let $(m,m') \in \cM^2$ be fixed. Let
  \begin{equation} \label{eq_star2}
   \begin{aligned}
    \sigma &:= w_1(\sqrt{\loss{t_m}}) + w_1(\sqrt{\loss{t_{m'}}}), \\
\text{and} \qquad    c &:=  w_2(\sqrt{\loss{t_m}}) + w_2(\sqrt{\loss{t_{m'}}}) \enspace . 
   \end{aligned} 
  \end{equation}
By hypothesis $H \bigl(w_1,w_2,(t_m)_{m \in \cM} \bigr)$,
\begin{equation} \label{bern_hyp2}
  \exists c_{m,m'} \text{ such that } \forall k \geq 2, 
P\left(\cfun(t_m,\cdot) - \cfun(t_{m'},\cdot) - c_{m,m'}\right)^k \leq k! \sigma^2 c^{k-2} \enspace .
\end{equation}
For all $y > 0$, let $\Omega_y(m,m')$ be the event on which
 \begin{equation} \label{bern_bound2}
  (P_{n_v} - P)\bigl[ \cfun(t_m,\cdot) - \cfun(t_{m'},\cdot) \bigr] \leq \sqrt{\frac{2y}{n_v}} \sigma + \frac{cy}{n_v} \enspace . 
 \end{equation}
By Bernstein's inequality, $\mathbb{P}\bigl( \Omega_y(m,m') \bigr) \geq 1 - \mathrm{e}^{-y}$.

Let $q = \frac{\theta}{2} \sqrt{\frac{n_v}{x + \log \lvert \cM \rvert}}$.
By Lemma~\ref{fix_w2} with $r = q$, 
\[ \sigma :=  w_1(\sqrt{\loss{t_m}}) + w_1(\sqrt{\loss{t_{m'}}}) \leq 
q \left( \loss{t_m} \vee \delta^2(w_1,q) + \loss{t_{m'}} \vee \delta^2(w_1,q) \right) .\]
  Set $y = x + \log \lvert \cM \rvert$ in \eqref{bern_bound2}.
  Then 
    \begin{align*}
        \sqrt{\frac{2y}{n_v}} \sigma &:= \sqrt{\frac{2(x + \log \lvert \cM \rvert)}{n_v}} \sigma \\
        &\leq \sqrt{\frac{2(x + \log \lvert \cM \rvert)}{n_v}}  \frac{\theta}{2} \sqrt{\frac{n_v}{x + \log \lvert \cM \rvert}} \left( \loss{t_m} \vee \delta^2(w_1,q)+ \loss{t_{m'}} \vee \delta^2(w_1,q) \right) \\
        &\leq \frac{\theta}{\sqrt{2}} \left( \loss{t_m} + \loss{t_{m'}} + 2 \delta^2 \left(w_1, \frac{\theta}{2} \sqrt{\frac{n_v}{x + \log \lvert \cM \rvert}}\right) \right). \numberthis \label{bern_12}
       \end{align*}
  As for the second term of \eqref{bern_bound2}, by Lemma~\ref{fix_w2} with $r = q^2$, we have
  \[ c :=  w_2(\sqrt{\loss{t_m}}) + w_2(\sqrt{\loss{t_{m'}}}) \leq 
q^2 \left( \loss{t_m} \vee \delta^2(w_2,q^2) + \loss{t_{m'}} \vee \delta^2(w_2,q^2) \right). \]
Recall that $q$ is shorthand for $\frac{\theta}{2} \sqrt{\frac{n_v}{x + \log \lvert \cM \rvert}}$. Therefore:
  \begingroup 
  \begin{align*} 
    c \frac{y}{n_v} &\leq \frac{x + \log \lvert \cM \rvert}{n_v}  \frac{\theta^2}{4} \frac{n_v}{x + \log \lvert \cM \rvert} \left( \loss{t_m} \vee \delta^2(w_2,q^2) + \loss{t_{m'}} \vee \delta^2(w_2,q^2)\right) \\ 
    &= \frac{\theta^2}{4} \left( \loss{t_m} \vee \delta^2(w_2,q^2) + \loss{t_{m'}} \vee \delta^2(w_2,q^2) \right) \\
    &\leq \frac{\theta^2}{4} \left(\loss{t_m} + \loss{t_{m'}} + 2 \delta^2 \left(w_2,\frac{\theta^2}{4} \frac{n_v}{x + \log \lvert \cM \rvert}\right) \right) . \numberthis \label{bern_22}
  \end{align*} \endgroup
  Since $\sqrt{\frac{1}{2}} + \frac{1}{4} \leq 1$ and $\theta \in (0;1]$, plugging \eqref{bern_12} and \eqref{bern_22} in \eqref{bern_bound2} yields, 
  on the event $\Omega_{x + \log \lvert \cM \rvert}(m,m')$, for all $\theta \in (0;1]$,
  \begin{align*}
   (P_{n_v} - P)[\cfun(t_m,\cdot) - \cfun(t_{m'},\cdot)] &\leq \theta \bigl( \loss{t_m} + \loss{t_{m'}} \bigr) + \sqrt{2} \theta \delta^2 \left(w_1, \frac{\theta}{2} \sqrt{\frac{n_v}{x + \log \lvert \cM \rvert}} \right) \\ 
   &+ \frac{\theta^2}{2} \delta^2 \left(w_2, \frac{\theta^2}{4} \frac{n_v}{x + \log \lvert \cM \rvert} \right) . \numberthis \label{eq_dev}
  \end{align*}
  Suppose now that $x \mapsto \frac{w_j(x)}{x}$ is nonincreasing for $j \in \{1;2\}$.
  Let $\theta \in [0;1]$. Let $y \geq 0$.
  By Lemma~\ref{fix_w1} with $a = \sqrt{2y}$ and $b = \sqrt{n_v}$,
  \begin{align} 
\notag 
\sqrt{\frac{2y}{n_v}} \sigma 
&= \sqrt{\frac{2y}{n_v}}\left( w_1(\sqrt{\loss{t_m}}) + w_1(\sqrt{\loss{t_{m'}}}) \right)   
\\
\label{eq_w12} 
&\leq \frac{\theta}{2} \loss{t_m} + \frac{\theta}{2} \loss{t_{m'}} + \delta^2(w_1, \sqrt{n_v}) \left[\theta + \frac{2y}{\theta} \right] \enspace . 
\end{align}
  By Lemma~\ref{fix_w1} with $a = y$ and $b = n_v$,
  \begingroup 
  \begin{align*}
  c\frac{y}{n_v} &= \frac{y}{n_v} \left( w_2(\sqrt{\loss{t_m}}) + w_2(\sqrt{\loss{t_{m'}}}) \right) \\
  &\leq  \frac{\theta}{2} \loss{t_m} + \frac{\theta}{2} \loss{t_{m'}} + \delta^2(w_2,n_v) \left[\theta + \frac{y^2}{\theta} \right]. \numberthis \label{eq_w12_2} 
  \end{align*} \endgroup
  Plugging \eqref{eq_w12} and \eqref{eq_w12_2} in \eqref{bern_bound2} yields,
  on the event $\Omega_y(m,m')$, for all $\theta \in (0;1]$,
  \begin{equation} 
  \begin{split}
&\qquad   (P_{n_v} - P)[\cfun(t_m,\cdot) - \cfun(t_{m'},\cdot)] 
   \\
&\leq \theta \loss{t_m} + \theta \loss{t_{m'}} + \delta^2(w_1, \sqrt{n_v}) \left[\theta + \frac{2y}{\theta} \right] 
  + \delta^2(w_2,n_v) \left[\theta + \frac{y^2}{\theta} \right] \enspace .
  \end{split}
\label{eq_dev2}  
\end{equation}
  
By \eqref{eq_dev},
Lemma~\ref{min_lemm} applies with $p = \frac{\mathrm{e}^{-x}}{\lvert \cM \rvert}$ and 
\[ R(\theta) = \sqrt{2} \theta  \delta^2 \left(w_1,\frac{\theta}{2} \sqrt{\frac{n_v}{x + \log \lvert \cM \rvert}} \right)  + \frac{\theta^2}{2} \delta^2 \left(w_2, \frac{\theta^2}{4} \frac{n_v}{x + \log \lvert \cM \rvert}  \right). \]
This yields \eqref{eq_plus2}.
By \eqref{eq_dev2},
Lemma~\ref{min_lemm} applies with $p = \mathrm{e}^{-y}$ and 
\[ R(\theta) = \theta \left[\delta_1^2 + \delta_2^2 \right] + \frac{1}{\theta} \left[2y\delta_1^2 + y^2\delta_2^2 \right]. \]
Setting $y = \log \lvert \cM \rvert + x$ yields \eqref{eq_2plus2}. 
\qed

\subsection{Proof of Theorem~\ref{agcv_mean}}
We start by proving two lemmas. 
\begin{lemma} \label{prob_to_exp}
 Let $f \in L^1(\mathbb{R}_+,\mathrm{e}^{-x}\mathrm{d}x)$ be a non-negative, non-decreasing function such that $\lim\limits_{x \to +\infty} f(x) = + \infty$. Let
 $X$ be a random variable such that 
 \[ \forall x \in \mathbb{R}_+, \mathbb{P} \bigl( X > f(x) \bigr) \leq \mathrm{e}^{-x} \enspace . \]
 Then \[ \mathbb{E}[X] \leq \int_0^{+ \infty} f(x) \mathrm{e}^{-x} \mathrm{d}x \enspace . \]
\end{lemma}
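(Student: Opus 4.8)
The plan is to prove this via the classical ``layer-cake'' representation of the expectation of a non-negative random variable, combined with a change of variables dictated by $f$. First I would write $\mathbb{E}[X] = \int_0^{+\infty} \mathbb{P}(X > t)\,\mathrm{d}t$, which holds for any non-negative random variable. The goal is then to bound $\mathbb{P}(X > t)$ in terms of the hypothesis $\mathbb{P}(X > f(x)) \leq \mathrm{e}^{-x}$; the natural move is to substitute $t = f(x)$.

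The substitution requires a little care because $f$ is only assumed non-decreasing, hence not necessarily continuous or strictly increasing, so it need not have a genuine inverse. The clean way around this is to define a generalized inverse, for instance $f^{-1}(t) = \sup\{x \geq 0 : f(x) \leq t\}$ (or the analogous $\inf\{x : f(x) > t\}$), and to check the key inequality: if $x < f^{-1}(t)$ in the appropriate sense, then $f(x) \leq t$, so that $\{X > t\} \subseteq \{X > f(x)\}$ and hence $\mathbb{P}(X > t) \leq \mathrm{e}^{-x}$; taking the supremum over such $x$ gives $\mathbb{P}(X > t) \leq \mathrm{e}^{-f^{-1}(t)}$ for (almost) every $t$ in the range of $f$. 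The assumptions that $f(0) \geq 0$, that $f$ is non-decreasing, and that $f(x) \to +\infty$ guarantee that $f^{-1}(t)$ is finite and well-defined for $t$ large enough, and that the relevant exceptional set has measure zero. One then plugs this into $\mathbb{E}[X] = \int_0^{+\infty}\mathbb{P}(X>t)\,\mathrm{d}t \leq \int_0^{+\infty} \mathrm{e}^{-f^{-1}(t)}\,\mathrm{d}t$ and performs the change of variables $t = f(x)$ (justified by, e.g., the fact that $f$ being monotone induces a Lebesgue--Stieltjes measure and $f^{-1}$ is its distributional inverse), which transforms the right-hand side into $\int_0^{+\infty} x\, \mathrm{d}f(x)$; an integration by parts, using $f \in L^1(\mathrm{e}^{-x}\mathrm{d}x)$ to control boundary terms, rewrites this as $\int_0^{+\infty} f(x)\mathrm{e}^{-x}\,\mathrm{d}x$, which is the claimed bound.

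An alternative, perhaps slicker, route avoids inverses entirely: bound $X$ pointwise. Note that for each outcome $\omega$, either $X(\omega) \leq f(0)$, or there is a largest (or a supremum of) $x$ with $f(x) < X(\omega)$; call it $\tau(\omega)$. Then $X(\omega) \leq f(\tau(\omega))$ (using monotonicity and that $f \to \infty$), and moreover $\{\tau > x\} \subseteq \{X > f(x)\}$ up to the monotonicity subtleties, so $\mathbb{P}(\tau > x) \leq \mathrm{e}^{-x}$. Writing $\mathbb{E}[X] \leq \mathbb{E}[f(\tau)] = \int_0^{+\infty}\mathbb{P}(f(\tau) > s)\,\mathrm{d}s$ and unwinding again leads, after the same change of variables, to $\int_0^{+\infty} f(x)\mathrm{e}^{-x}\,\mathrm{d}x$. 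Either way the analytic content is identical.

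The main obstacle I anticipate is purely technical: handling the lack of continuity/strict monotonicity of $f$ so that the change of variables $\int \mathrm{e}^{-f^{-1}(t)}\,\mathrm{d}t = \int_0^{+\infty} f(x)\mathrm{e}^{-x}\,\mathrm{d}x$ is rigorously justified, including the integration-by-parts step and the vanishing of boundary terms at $+\infty$ (which is exactly where the hypothesis $f \in L^1(\mathbb{R}_+, \mathrm{e}^{-x}\mathrm{d}x)$ is used — it forces $\liminf_{x\to\infty} f(x)\mathrm{e}^{-x} = 0$). None of this is deep, but it is the part that needs to be written carefully rather than waved through; everything else is the standard tail-integral identity.
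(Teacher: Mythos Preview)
Your approach is correct and shares the same backbone as the paper's proof: the layer-cake identity $\mathbb{E}[X]\leq\int_0^\infty \mathbb{P}(X>t)\,\mathrm{d}t$, a change of variables $t=f(x)$, and an integration by parts using $f\in L^1(\mathrm{e}^{-x}\mathrm{d}x)$ to kill the boundary term. The one genuine difference is in how the lack of smoothness of $f$ is handled. You propose to work directly with a generalized inverse $f^{-1}$ and justify the change of variables via Lebesgue--Stieltjes measures; the paper instead first replaces $f$ by a smooth non-decreasing majorant $g\geq f$ (obtained by mollifying $f(\cdot+\varepsilon)$), carries out the now-elementary change of variables and integration by parts for $g$, and then lets $g\downarrow f$ in $L^1(\mathrm{e}^{-x}\mathrm{d}x)$. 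The paper's route trades your measure-theoretic care for a standard approximation argument, which neatly bypasses exactly the ``main obstacle'' you flagged; your route is more direct once the generalized-inverse machinery is in place. Both are valid and yield the same bound.
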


\begin{proof}
 Let $g \in L^1(\mathbb{R}_+,\mathrm{e}^{-x}\mathrm{d}x)$ be a non-decreasing, differentiable function such that $g \geq f$.
 Then
 \begin{align*}
  \mathbb{E}[X] &\leq \int_{0}^{+\infty} \mathbb{P}[X > t] \mathrm{d}t \\
  &= \int_0^{g(0)} \mathbb{P}[X > t] \mathrm{d}t + \int_{0}^{+\infty} \mathbb{P}[X > g(x)] g'(x) \mathrm{d}x \\
  &\leq g(0) + \int_{0}^{+\infty} \mathrm{e}^{-x} g'(x)\mathrm{d}x \text{\quad since } g \geq f \\
  &= g(0) + [\mathrm{e}^{-x}g(x)]_0^{\infty} +\int_{0}^{+\infty}\mathrm{e}^{-x} g(x)\mathrm{d}x \\
  &= \int_{0}^{+\infty}\mathrm{e}^{-x} g(x)\mathrm{d}x \enspace .
 \end{align*}
 It remains to show that $g$ can approximate $f$ in $L^1(\mathbb{I}_{x \geq 0} \mathrm{e}^{-x} \mathrm{d}x)$.
 Let $K$ be a nonnegative smooth function vanishing outside $[-1;1]$, normalized such that $\int K(t) \mathrm{d}t = 1$. Let $\varepsilon > 0$.
 Define
 \begin{align}
  f_{\varepsilon}(x) &= \frac{1}{\varepsilon} \int f(t) K\left( \frac{x+\varepsilon - t}{\varepsilon} \right) \mathrm{d}t \label{eq_approx1} \\
  &= \frac{1}{\varepsilon} \int f(x + \varepsilon - t) K \left( \frac{t}{\varepsilon} \right) \mathrm{d}t \label{eq_approx2} 
 \end{align}
By \eqref{eq_approx1}, $f_{\varepsilon}$ is smooth.
By \eqref{eq_approx2}, $f_{\varepsilon}$ is nondecreasing, moreover 
\begin{align*}
 f_{\varepsilon}(x) - f(x) &= 
 \frac{1}{\varepsilon} \int \bigl[f(x + \varepsilon - t) - f(x) \bigr] K \left( \frac{t}{\varepsilon} \right) \mathrm{d}t \text{ since} \int K = 1 \\
 &= \frac{1}{\varepsilon} \int_{- \varepsilon}^\varepsilon \bigl[f(x + \varepsilon - t) - f(x) \bigr] K \left( \frac{t}{\varepsilon} \right) \mathrm{d}t \text{ since } K(u) = 0 \text{ when } |u| \geq 1\\
 &\geq 0 \text{ since } f \text{ is nondecreasing and } K \geq 0 \enspace .
\end{align*}
Thus $f_{\varepsilon} \geq f$.
Finally, by Jensen's inequality and Fubini's theorem,
\begin{align*}
 \int |f_{\varepsilon}(x) - f(x)|\mathrm{e}^{-x} \mathrm{d}x &\leq
 \frac{1}{\varepsilon} \int_{- \varepsilon}^\varepsilon K \left( \frac{t}{\varepsilon} \right) \int |f(x+\varepsilon-t) - f(x)|\mathrm{e}^{-x} \mathrm{d}x \\
 &\leq \sup_{|\tau| \leq 2\varepsilon} \int |f(x+\tau) - f(x)|\mathrm{e}^{-x} \mathrm{d}x \enspace ,
\end{align*}
which converges to $0$ when $\varepsilon \rightarrow 0$ since $f \in L^1(\mathbb{R}_+,\mathrm{e}^{-x}\mathrm{d}x)$.
\end{proof}

 We use the following additional notation:
\begin{Definition} \label{def_g}
 Let $g$ be the function defined by 
 \[ \forall (\theta,y,p,q) \in (0;1] \times \mathbb{R}_+^3 , \qquad g(\theta,y,p,q) = \theta [p + q] 
  + \frac{1}{\theta} \left[ 2y p + y^2 q \right] \enspace .\]
\end{Definition} 
This function satisfies the following properties. 
\begin{lemma} \label{int_calc}
Let $g$ be the function given in Definition~\ref{def_g}.
 For any $\theta \in [0;1]$ and any $u > 0, p \geq 0, q \geq 0$,
 \begin{align*}
  \mathrm{e}^u \int_u^{+ \infty} g(\theta,y,p,q) \mathrm{e}^{-y} \mathrm{d}y 
  &= \left(\theta + \frac{2(1+u)}{\theta} \right) p + \left(\theta + \frac{2 + 2u + u^2}{\theta} \right)q \enspace . 
 \end{align*}
\end{lemma}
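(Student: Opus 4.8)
The proof is a direct computation. The plan is to expand $g$ according to Definition~\ref{def_g}, namely
\[ g(\theta,y,p,q) = \theta(p+q) + \frac{2p}{\theta}\, y + \frac{q}{\theta}\, y^2 \enspace , \]
so that $\int_u^{+\infty} g(\theta,y,p,q)\mathrm{e}^{-y}\,\mathrm{d}y$ becomes a linear combination of the three tail moments $I_k(u) := \int_u^{+\infty} y^k \mathrm{e}^{-y}\,\mathrm{d}y$ for $k \in \{0,1,2\}$, with respective coefficients $\theta(p+q)$, $2p/\theta$ and $q/\theta$.

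Next I would evaluate these moments. Clearly $I_0(u) = \mathrm{e}^{-u}$, and then one and two integrations by parts give $I_1(u) = (1+u)\mathrm{e}^{-u}$ and $I_2(u) = (2 + 2u + u^2)\mathrm{e}^{-u}$; the boundary terms at $+\infty$ all vanish because $y^k\mathrm{e}^{-y} \to 0$. Substituting back yields
\[ \int_u^{+\infty} g(\theta,y,p,q)\mathrm{e}^{-y}\,\mathrm{d}y = \mathrm{e}^{-u}\left[\theta(p+q) + \frac{2p}{\theta}(1+u) + \frac{q}{\theta}(2+2u+u^2)\right] \enspace . \]

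Finally, multiplying through by $\mathrm{e}^{u}$ removes the common factor $\mathrm{e}^{-u}$, and regrouping the terms according to whether they carry $p$ or $q$ produces exactly $\bigl(\theta + \frac{2(1+u)}{\theta}\bigr)p + \bigl(\theta + \frac{2+2u+u^2}{\theta}\bigr)q$, which is the claimed identity. There is no genuine obstacle: every step is an exact evaluation, and the only point needing a little attention is the bookkeeping in the two integrations by parts used for $I_2(u)$ and the (immediate) vanishing of the boundary terms at infinity.
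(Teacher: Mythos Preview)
Your proposal is correct and essentially identical to the paper's proof: both expand $g$ linearly, compute the three tail integrals $\int_u^{\infty} y^k \mathrm{e}^{-y}\,\mathrm{d}y$ for $k=0,1,2$ (with the same values you found), substitute, and regroup in $p$ and $q$. If anything, your write-up is slightly more explicit about obtaining $I_1$ and $I_2$ via integration by parts.
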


\begin{proof} of Lemma~\ref{int_calc}

Using the formulas 
\begin{align*}
  \int_u^{+\infty} \mathrm{e}^{-x} \mathrm{d}x &= \mathrm{e}^{-u}, \int_u^{+\infty} x \mathrm{e}^{-x} \mathrm{d}x = (1+u)\mathrm{e}^{-u}, \\ 
  \int_u^{+\infty} x^2 \mathrm{e}^{-x} \mathrm{d}x &= (u^2 + 2u + 2)\mathrm{e}^{-u} \enspace ,
\end{align*}
we get:
\begin{align*}
 \mathrm{e}^u \int_u^{+ \infty} g(\theta,y,p,q) \mathrm{e}^{-y} \mathrm{d}y &= \theta [p+q] + \frac{2}{\theta}(1+u)p + (u^2 + 2u +2) \frac{q}{\theta} \\
 &= \left(\theta + \frac{2(1+u)}{\theta} \right) p + \left(\theta + \frac{2 + 2u + u^2}{\theta} \right)q \enspace .
\end{align*}

\end{proof}
%

We can now proceed with the proof of Theorem~\ref{agcv_mean}. Let $\theta \in (0;1]$ be fixed. 
Let $(\ERMho{T})_{T \in \cT}$ be the individual hold out estimators,
so that $\ERMag{\cT} = \frac{1}{\lvert \cT \rvert} \sum_{T \in \cT} \ERMho{T}$.
By convexity of the risk functional $\risk$, we have 
\[ \risk(\ERMag{\cT}) \leq \frac{1}{\lvert \cT \rvert} \sum_{T \in \cT} \risk(\ERMho{T}) \enspace . \]
It follows by substracting $\risk(\bayes)$ that:
\[ \loss{\ERMag{\cT}} \leq \frac{1}{\lvert \cT \rvert} \sum_{T \in \cT} \loss{\ERMho{T}} \enspace . \]
Since the data are i.i.d, by assumption \eqref{hyp.T},
all $\ERMho{T}$ have the same distribution.
Let $T_1 = \{1,\ldots,n_t \}$, so that $D_n^{T_1} = D_{n_t}$. 
Taking expectations yields
\begin{equation} \label{risk_ineq}
 \mathbb{E}[\loss{\ERMag{\cT}}] \leq \mathbb{E}[\loss{\ERMho{T_1}}] 
 \enspace . 
\end{equation}
Since $H\left(\widehat{w}_{1,1}, \widehat{w}_{1,2}, (\learnrule_m(D_{n_t})_{m \in \cM}) \right)$ holds, we can apply Theorem~\ref{hold_out_gen2} conditionally on $D_{n_t}$, with $t_m = \learnrule_m(D_{n_t})$. 

\paragraph{Proof of \eqref{eq_12}}
For $i \in \{1;2\}$, let $\widehat{\delta}_{1,i} = \delta(\widehat{w}_{1,i},\sqrt{n_v}^i)$. Let $g$ be given in Definition~\ref{def_g}.
By Theorem~\ref{hold_out_gen2}, Equation~\eqref{eq_2plus2}, for any $z \geq 0$, with probability
greater than $1 - \mathrm{e}^{-z}$,
\begin{equation} \label{thm_ho_g}
 (1-\theta) \loss{\ERMho{T_1}} \leq (1+\theta) \min_{m \in \cM} \loss{t_{m}} + g\bigl(\theta, z + \log \lvert \cM \rvert,\widehat{\delta}^2_{1,1} , \widehat{\delta}^2_{1,2} \bigr) \enspace .
\end{equation}
As $g$ is nondecreasing in its second variable,
Lemma~\ref{prob_to_exp} applied to the random variable   
$(1-\theta) \loss{\ERMho{T_1}}$ yields:
\[ (1-\theta) \mathbb{E} \left[ \loss{\ERMho{T_1}} \bigl| D_n^{T_1} \right] \leq 
(1+\theta) \min_{m \in \cM} \loss{t_{m}}
+ \int_{\log \lvert \cM \rvert}^{+ \infty} g\bigl(\theta,y,\widehat{\delta}^2_{1,1},\widehat{\delta}^2_{1,2}  \bigr) \mathrm{e}^{-(y - \log \lvert \cM \rvert)} \mathrm{d}y\enspace . \]
Lemma~\ref{int_calc} yields 
\begin{align*}
 (1-\theta) \mathbb{E} \left[ \loss{\ERMho{T_1}} \bigl| D_n^{T_1} \right] &\leq 
(1+\theta) \min_{m \in \cM} \loss{t_{m}}
+ \left(\theta + \frac{2\left(1 + \log \lvert \cM \rvert \right)}{\theta} \right) \widehat{\delta}^2_{1,1} \\ 
&+ \left(\theta + \frac{2\left(1 + \log \lvert \cM \rvert \right) + \log^2 \lvert \cM \rvert}{\theta} \right) \widehat{\delta}^2_{1,2}\enspace .
\end{align*}
Taking expectations with respect to $D_n^{T_1} = D_{n_t}$,
\begin{align*}
 (1-\theta) \mathbb{E} \left[ \loss{\ERMho{T_1}} \right] &\leq  (1+\theta) \mathbb{E} \bigl[ \min_{m \in \cM} \loss{\learnrule_{m}(D_{n_t})} \bigr]
+ \left(\theta + \frac{2\left(1 + \log \lvert \cM \rvert \right)}{\theta} \right) \mathbb{E}\bigl[ \widehat{\delta}^2_{1,1} \bigr] \\ 
&+ \left(\theta + \frac{2\left(1 + \log \lvert \cM \rvert \right) + \log^2 \lvert \cM \rvert}{\theta} \right) \mathbb{E}\left[ \widehat{\delta}^2_{1,2} \right]\enspace .
\end{align*}
Equation~\eqref{eq_12} then follows from Equation~\eqref{risk_ineq}.

\paragraph{Proof of \eqref{eq_2}}
Fix $x \geq 0$.
For $i \in \{1;2\}$, let 
$\widehat{\delta}_{2,i} = \delta \left( \widehat{w}_{2,i},\left(\frac{\theta}{2}\sqrt{\frac{n_v}{x + \log \lvert \cM \rvert}} \right)^i \right). $

%
By Theorem~\ref{hold_out_gen2}, Equation~\eqref{eq_plus2}, with probability larger than $1 - \mathrm{e}^{-x}$, 
\begin{equation} \label{thm_ho_g2}
 (1- \theta) \loss{\ERMho{T_1}} \leq (1+\theta)\min_{m \in \cM} \loss{t_m} + \sqrt{2} \theta \widehat{\delta}^2_{2,1} + \frac{\theta^2}{2}\widehat{\delta}^2_{2,2} \enspace . 
\end{equation}
Combining \eqref{thm_ho_g} and \eqref{thm_ho_g2}, for any $z \geq 0$, with probability larger than $1 - \mathrm{e}^{-z}$,
\begin{align*}
 (1-\theta) \loss{\ERMho{T_1}} &\leq (1+\theta)\min_{m \in \cM} \loss{t_m} + \sqrt{2} \theta \widehat{\delta}^2_{2,1} + \frac{\theta^2}{2}\widehat{\delta}^2_{2,2} + \mathbb{I}_{z \geq x} g\bigl(\theta, z + \log \lvert \cM \rvert,\widehat{\delta}^2_{1,1} , \widehat{\delta}^2_{1,2} \bigr) \enspace .
\end{align*}
 By Lemma~\ref{prob_to_exp},
\begin{align*}
(1-\theta) \mathbb{E}\left[ \loss{\ERMho{T_1}} \bigl| D_n^{T_1} \right] &\leq (1+\theta)  \min_{m \in \cM} \loss{t_m} + \sqrt{2} \theta \widehat{\delta}^2_{2,1}+ \frac{\theta^2}{2}\widehat{\delta}^2_{2,2} \\ 
&\qquad + \int_{x + \log \lvert \cM \rvert}^{+ \infty} g\bigl(\theta, y,\widehat{\delta}^2_{1,1} , \widehat{\delta}^2_{1,2} \bigr)  \mathrm{e}^{-(y-\log \lvert \cM \rvert)} \mathrm{d}y \enspace .
\end{align*}
By Lemma~\ref{int_calc}, it follows that
\begin{align*} 
  (1-\theta) \mathbb{E}\left[ \loss{\ERMho{T_1}} \bigl| D_n^{T_1} \right] &\leq (1+ \theta)\min_{m \in \cM} \loss{t_m} + \sqrt{2} \theta \widehat{\delta}^2_{2,1} + \frac{\theta^2}{2} \widehat{\delta}^2_{2,2} \\ 
  &\qquad + \mathrm{e}^{-x} \left( \theta + \frac{2(1 + x + \log \lvert \cM \rvert)}{\theta} \right) \widehat{\delta}^2_{1,1}\\  
 &\qquad + \mathrm{e}^{-x} \left( \theta + \frac{2(1 + x + \log \lvert \cM \rvert) + (x +\log\lvert \cM \rvert)^2}{\theta} \right) \widehat{\delta}^2_{1,2} \enspace .
\end{align*}
Taking expectations with respect to $D_n^{T_1}$ and using inequality \eqref{risk_ineq} yields Equation~\eqref{eq_2} of Theorem~\ref{agcv_mean}.
\qed

\section{RKHS regression: proof of Theorem~\ref{rkhs_thm}} \label{sec_proof_rkhs}
In the following, for any $g: \mathbb{R} \times \mathbb{R} \rightarrow \mathbb{R}$ and $t: \cX \rightarrow \mathbb{R}$,
the function $(x,y) \mapsto g(t(x),y)$ is denoted by~$g \circ t$.

\subsection{Preliminary results}

Remark first that the RKHS norm dominates the supremum norm:
\begin{lemma} \label{rkhs_norm_bound}
 If $\kappa = \sup_{x} K(x,x) < + \infty$ then for any $t \in \mathcal{H}$,
 \[ \NormInfinity{t} \leq \sqrt{\kappa} \hNorm{t} \enspace .  \]
\end{lemma}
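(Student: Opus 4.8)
The plan is to derive this directly from the reproducing property of $\mathcal{H}$ together with the Cauchy--Schwarz inequality, so there is essentially nothing to ``prove'' beyond unpacking the definition of an RKHS. First I would recall that, since $\mathcal{H}$ is the reproducing kernel Hilbert space associated with the positive-definite kernel $K$, for every $x \in \cX$ one has $K(x,\cdot) \in \mathcal{H}$ and the reproducing identity $t(x) = \langle t, K(x,\cdot)\rangle_{\mathcal{H}}$ holds for all $t \in \mathcal{H}$. Applying Cauchy--Schwarz in $\mathcal{H}$ then gives $\lvert t(x)\rvert = \lvert \langle t, K(x,\cdot)\rangle_{\mathcal{H}} \rvert \leq \hNorm{t}\,\hNorm{K(x,\cdot)}$. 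It remains to control $\hNorm{K(x,\cdot)}$, which is again immediate from the reproducing property: $\hNorm{K(x,\cdot)}^2 = \langle K(x,\cdot), K(x,\cdot)\rangle_{\mathcal{H}} = K(x,x)$.

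Combining the two displays, $\lvert t(x)\rvert \leq \hNorm{t}\,\sqrt{K(x,x)} \leq \sqrt{\kappa}\,\hNorm{t}$, where the last inequality uses the hypothesis $\kappa = \sup_{x} K(x,x) < +\infty$ — this is the only point where the assumption enters, and it is precisely what makes the bound uniform in $x$. Taking the supremum over $x \in \cX$ yields $\NormInfinity{t} \leq \sqrt{\kappa}\,\hNorm{t}$, as claimed. There is no genuine obstacle: the only thing to be careful about is to invoke the reproducing property in both places (to represent point evaluation and to evaluate the norm of the kernel section) and to note that $\kappa < +\infty$ is exactly the quantity that bounds the supremum.
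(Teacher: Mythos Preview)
Your proof is correct and follows essentially the same approach as the paper: both use the reproducing property to write $t(x) = \langle t, K(x,\cdot)\rangle_{\mathcal{H}}$, apply Cauchy--Schwarz, and then bound $\hNorm{K(x,\cdot)}^2 = K(x,x) \leq \kappa$ before taking the supremum over $x$.
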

\begin{proof}
By definition of an RKHS, $\forall t \in \mathcal{H}, \forall x \in \cX, \langle t, K(x,\cdot) \rangle_{\mathcal{H}} = t(x)$. It follows that, for any $t \in \mathcal{H}$,
\begin{align*}
 \NormInfinity{t}^2 = \sup_x t(x)^2 &= \sup_x \langle t, K(x,\cdot) \rangle^2_\mathcal{H} \\
 &\leq \hNorm{t}^2 \sup_x \langle K(x,\cdot), K(x,\cdot) \rangle \\
&\leq \hNorm{t}^2 \sup_x K(x,x). \\
\end{align*}
\end{proof}
%
Using standard arguments, the following deviation inequality can be derived.
\begin{proposition} \label{erm_rkhs}
Let $\mathcal{H}$ denote a RKHS with bounded kernel $K: \cX \times \cX \rightarrow \mathbb{R}$. 
Let $\kappa = \sup_x K(x,x)$ and $h: \mathbb{R}^2 \rightarrow \mathbb{R}$ be Lipschitz in its first argument with Lipschitz constant $L$. For any $t \in \mathcal{H}$ and $r > 0$, denote 
\[ B_{\mathcal{H}}(t,r) = \bigl\{t' \in \mathcal{H} \,\vert\, \hNorm{t' - t} \leq r \bigr\} \enspace . \] 
Let $t_0 \in \mathcal{H}$. Then for any probability measure $P$ on $\cX \times \mathbb{R}$ and any $y > 0$,
\[ P^{\otimes n} \left[ \sup_{(t_1,t_2) \in B_{\mathcal{H}}(t_0,r)^2} (P_n - P) \bigl(h \circ t_1 - h \circ t_2 \bigr)   \geq 2(2 + \sqrt{2y})L\frac{r \sqrt{\kappa}}{\sqrt{n}}\right] \leq \mathrm{e}^{-y}\enspace . \]
\end{proposition}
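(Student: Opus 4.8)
The plan is to bound the supremum of the empirical process over a ball in the RKHS by a standard symmetrization–contraction–concentration argument, exploiting that the RKHS ball is effectively a low-complexity class controlled by the kernel bound $\kappa$.

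First I would reduce to a supremum of a single empirical process: for $(t_1,t_2)\in B_{\mathcal H}(t_0,r)^2$ write $h\circ t_1 - h\circ t_2 = (h\circ t_1 - h\circ t_0) - (h\circ t_2 - h\circ t_0)$, so that
\[
\sup_{(t_1,t_2)\in B_{\mathcal H}(t_0,r)^2}(P_n-P)(h\circ t_1 - h\circ t_2)
\leq 2\sup_{t\in B_{\mathcal H}(t_0,r)}(P_n-P)(h\circ t - h\circ t_0)\enspace .
\]
Let $Z = \sup_{t\in B_{\mathcal H}(t_0,r)}(P_n-P)(h\circ t - h\circ t_0)$. Each summand $f_t(x,y):=h(t(x),y)-h(t_0(x),y)$ satisfies, by the Lipschitz assumption on $h$ and Lemma~\ref{rkhs_norm_bound}, $\NormInfinity{f_t}\leq L\NormInfinity{t-t_0}\leq L\sqrt{\kappa}\,r$, so the class $\{f_t : t\in B_{\mathcal H}(t_0,r)\}$ is uniformly bounded by $L\sqrt{\kappa}\,r$ and each $f_t$ has variance at most $(L\sqrt{\kappa}\,r)^2$. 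Hence the bounded differences / Talagrand-type bound applies: with probability at least $1-\mathrm e^{-y}$,
\[
Z \leq \mathbb E[Z] + L\sqrt{\kappa}\,r\sqrt{\frac{2y}{n}}\enspace ,
\]
using that a one-sided Bernstein/McDiarmid inequality gives a $\sqrt{2y/n}$ fluctuation term for a supremum of bounded, centered processes (keeping the constant clean enough to match $2+\sqrt{2y}$ after accounting for the factor $2$ above).

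Next I would bound $\mathbb E[Z]$ by symmetrization followed by the Lipschitz contraction principle (Ledoux–Talagrand): $\mathbb E[Z]\leq 2\,\mathbb E\bigl[\sup_t \frac1n\sum_i\varepsilon_i f_t(X_i,Y_i)\bigr]\leq 2L\,\mathbb E\bigl[\sup_{t\in B_{\mathcal H}(t_0,r)}\frac1n\sum_i\varepsilon_i (t-t_0)(X_i)\bigr]$, where $(\varepsilon_i)$ are Rademacher. The Rademacher complexity of an RKHS ball is classical: using the reproducing property, $\sup_{\hNorm{u}\leq r}\frac1n\sum_i\varepsilon_i u(X_i) = \frac{r}{n}\bigl\|\sum_i\varepsilon_i K(X_i,\cdot)\bigr\|_{\mathcal H}$, whose expectation is at most $\frac{r}{n}\sqrt{\mathbb E\sum_i K(X_i,X_i)}\leq \frac{r\sqrt{\kappa}}{\sqrt n}$ by Jensen. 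This yields $\mathbb E[Z]\leq \tfrac{2Lr\sqrt\kappa}{\sqrt n}$, and combined with the previous display and the factor-of-two reduction, the probability that the original supremum exceeds $2\bigl(2Lr\sqrt\kappa/\sqrt n + L\sqrt\kappa\,r\sqrt{2y/n}\bigr)=2(2+\sqrt{2y})Lr\sqrt\kappa/\sqrt n$ is at most $\mathrm e^{-y}$, as claimed.

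The main obstacle is bookkeeping the constants so that exactly the factor $2(2+\sqrt{2y})$ comes out: the contraction inequality, symmetrization, and the choice of which concentration inequality (McDiarmid with the $L\sqrt\kappa r$ bounded-difference constant versus a Bernstein bound using the variance proxy) all contribute multiplicative constants, and one must also verify that the one-sided deviation uses $\sqrt{2y/n}$ rather than a weaker rate. Everything else — the reduction to one process, the sup-norm bound via Lemma~\ref{rkhs_norm_bound}, and the RKHS Rademacher computation — is routine once the kernel is bounded; measurability of the supremum follows from the measurability assumption on learning rules together with separability of $\mathcal H$ (one may restrict to a countable dense subset of the ball).
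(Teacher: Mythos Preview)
Your proposal is correct and follows essentially the same approach as the paper: bounded-difference concentration for the supremum, symmetrization, the Ledoux--Talagrand contraction lemma, and the standard RKHS Rademacher bound $\mathbb{E}\bigl\|\sum_i\varepsilon_iK(X_i,\cdot)\bigr\|_{\mathcal H}\le\sqrt{n\kappa}$. The only cosmetic difference is that you reduce the pair supremum to twice a single supremum at the outset, whereas the paper keeps the pair supremum and splits it via the triangle inequality at the Rademacher-complexity stage; the resulting constants are identical.
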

\begin{proof}
Let $D_n = \left( X_i, Y_i \right)_{1 \leq i \leq n}$ be a dataset drawn from $P$.
Let $(\sigma_i)_{1 \leq i \leq n}$ be i.i.d Rademacher variables independent from $D_n$.
Denote by $R_n(\mathcal{F}) = \mathbb{E} \left[ \sup_{f \in \mathcal{F}} \frac{1}{n} \sum_{i = 1}^n \sigma_i f(X_i) \right]$ the Rademacher complexity of a class $\mathcal{F}$ of real valued functions.

By Lemma~\ref{rkhs_norm_bound}, for any $(t_1,t_2) \in B_{\mathcal{H}}(t_0,r)^2,$ 
\[ \NormInfinity{h \circ t_1 - h \circ t_2} \leq L \NormInfinity{t_1 - t_2} \leq L \left[ \NormInfinity{t_1 - t_0} + \NormInfinity{t_2 - t_0} \right] \leq 2L \sqrt{\kappa} r\enspace . \]
By symmetry under exchange of $t_1$ and $t_2$, notice that
\[R_n \left( \{ h \circ t_1 - h \circ t_2 | (t_1,t_2) \in B_{\mathcal{H}}(t_0,r)^2 \} \right) = \sup_{(t_1,t_2) \in B_{\mathcal{H}}(t_0,r)^2} \frac{1}{n} \left| \sum_{i = 1}^n \sigma_i (h \circ t_1 - h \circ t_2)(X_i) \right| \enspace .  \]
By the bounded difference inequality and \cite{Boucheron:2005}, Theorem 3.2, it follows that for any $y > 0$, with probability greater than $1 - \mathrm{e}^{-y}$,
\[\sup_{(t_1,t_2) \in B_{\mathcal{H}}(t_0,r)^2} (P_n-P)(h \circ t_1 - h \circ t_2) \leq 2 R_n \left( \{ h \circ t_1 - h \circ t_2 | (t_1,t_2) \in B_{\mathcal{H}}(t_0,r)^2 \} \right) + 2Lr\sqrt{\frac{2\kappa y}{n}}. \]
Moreover,
\begin{align*}
  & R_n \left( \{ h \circ t_1 - h \circ t_2 | (t_1,t_2) \in B_{\mathcal{H}}(t_0,r)^2 \} \right) \\
  &\leq R_n(\{h \circ t | t \in B_{\mathcal{H}}(t_0,r) \}) + R_n(\{- h \circ t | t \in B_{\mathcal{H}}(t_0,r) \}) \\ 
  &\leq 2L R_n(B_\mathcal{H}(t_0,r)) \text{by the contraction lemma (relevant version: \cite{Meir_Zhang:2003}, Theorem 7),} \\
  &= 2 L R_n(B_\mathcal{H}(0,r)) \text{ (by translation invariance).} 
\end{align*}
Finally, by  a classical computation (see for example \cite{Boucheron:2005}, Section 4.1.2),
\begin{align*}  
 & R_n \left( \{ h \circ t_1 - h \circ t_2 | (t_1,t_2) \in B_{\mathcal{H}}(t_0,r)^2 \} \right) \\
  &\leq 2L \frac{r}{n} \mathbb{E} \sqrt{\sum_{i = 1}^n K(X_i,X_i)} \\
  &\leq 2 Lr \sqrt{\frac{\kappa}{n}}\enspace .
\end{align*}
\end{proof}

The proof of Theorem~\ref{rkhs_thm} also uses the following peeling lemma.
\begin{lemma} \label{pealing_lemma}
Let $(Z_u)_{u \in T}$ be a stochastic process and $d: T \rightarrow \mathbb{R}_+$ be a function. Let $a \geq 0$ and $b \in (0;2]$ and assume that 
\begin{equation} \label{pealing_hyp}
 \forall r,y \geq 0, \mathbb{P} \left[ \sup_{u \in T :d(u) \leq r} Z_u \geq r\frac{1+ \sqrt{b(a+y)}}{\sqrt{n}} \right] \leq \mathrm{e}^{-y} \enspace .
\end{equation}
Then, for any $\theta \in (0;+\infty)$,
\[ \mathbb{P} \left[ \exists u \in T, Z_u \geq \theta d^2(u) + \frac{2 + b \bigl[ 1.1 + 2(a + y) \bigr]}{\theta n} \right] \leq \mathrm{e}^{-y}\enspace . \]
\end{lemma}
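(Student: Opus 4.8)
The plan is to run the standard peeling (``slicing'') argument: partition the index set $T$ according to the dyadic size of $d(u)$, apply the hypothesis \eqref{pealing_hyp} on each slice with a value of $y$ increased by an amount proportional to the slice index, and sum the resulting failure probabilities using a geometric series. First I would dispose of the trivial slice $\{u : d(u) = 0\}$ and, if necessary, a slice $\{u : d(u) \le \delta\}$ for a small threshold $\delta > 0$ to be optimized at the end; on such a slice \eqref{pealing_hyp} directly gives $Z_u \le \delta(1+\sqrt{b(a+y)})/\sqrt n$ with probability $\ge 1 - \mathrm{e}^{-y}$, which must be checked to be dominated by the claimed bound $\theta d^2(u) + (2 + b[1.1 + 2(a+y)])/(\theta n)$ (it is, since $d^2(u)$ can be as small as $0$ there, so one needs the constant term alone to absorb it — this fixes the natural choice $\delta \asymp 1/(\theta\sqrt n)$ up to constants).

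Next, for $k \ge 1$ I would set the slice $T_k = \{u \in T : \delta 2^{k-1} < d(u) \le \delta 2^k\}$, apply \eqref{pealing_hyp} with radius $r = \delta 2^k$ and with $y$ replaced by $y + k$, obtaining that with probability $\ge 1 - \mathrm{e}^{-y-k}$,
\[
\sup_{u \in T_k} Z_u \le \delta 2^k \frac{1 + \sqrt{b(a + y + k)}}{\sqrt n} \enspace .
\]
On this event I would bound $\sqrt{b(a+y+k)} \le \sqrt{b(a+y)} + \sqrt{bk}$ (subadditivity of $\sqrt{\cdot}$), so the right-hand side is at most $\delta 2^k n^{-1/2}(1 + \sqrt{b(a+y)} + \sqrt{bk})$, and then use $d(u) > \delta 2^{k-1}$ to write $\delta 2^k = 2\,\delta 2^{k-1} \le 2 d(u)$. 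The key step is then the elementary inequality $2\alpha\beta \le \theta\beta^2 + \alpha^2/\theta$ with $\beta = d(u)$ and $\alpha$ the remaining $n^{-1/2}$-scaled factor: this produces exactly a term $\theta d^2(u)$ plus a term of order $(1 + \sqrt{b(a+y)} + \sqrt{bk})^2/(\theta n)$. Expanding the square gives $\le (2 + 2b(a+y) + 2bk + (\text{cross terms}))/(\theta n)$ after using $(x+z)^2 \le 2x^2 + 2z^2$ twice; the $2bk/(\theta n)$ piece is the only one depending on $k$.

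To conclude, I would take a union bound over $k \ge 1$ (and the threshold slice), so the total failure probability is at most $\mathrm{e}^{-y}(1 + \sum_{k\ge1}\mathrm{e}^{-k}) = \mathrm{e}^{-y}(1 + (\mathrm{e}-1)^{-1}) < 2\mathrm{e}^{-y}$ — here I expect I will actually need to start the extra $y$-shift at $k$ rather than, say, spend a factor of $2$, and instead reindex so the shift is $y + k$ with the threshold slice carrying no shift but being handled separately, so that the sum is genuinely $\sum_{k \ge 1}\mathrm{e}^{-y-k} \le \mathrm{e}^{-y}$; this bookkeeping, together with the constant $1.1$, is where the delicate part lies. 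The remaining work is purely arithmetic: one must verify that, uniformly over $k$, the $k$-dependent contribution $2bk/(\theta n)$ is absorbed because the slice it comes from occurs only with probability $\mathrm{e}^{-y-k}$ — but since we want a \emph{deterministic} bound valid for \emph{all} $u$ on a single good event, this is not automatic, and the correct fix is to note that on the good event the bound $\theta d^2(u) + C(k)/(\theta n)$ holds on slice $T_k$ with $C(k) = 2 + b[\text{const} + 2(a+y) + 2k]$, and then re-absorb $2k$ back into $d^2(u)$: since $d(u) > \delta 2^{k-1}$ on $T_k$, we have $\theta d^2(u) \ge \theta \delta^2 4^{k-1}$, which grows geometrically in $k$ and hence dominates the linear-in-$k$ term $2bk/(\theta n)$ for all $k \ge 1$ once $\delta$ is chosen so that $\theta\delta^2 \gtrsim b/n$; choosing $\delta^2 = c\,b/(\theta^2 n)$ for a suitable absolute constant $c$ makes everything fit and produces precisely the stated constant, with $1.1$ arising as the optimized value of the various rounding constants (geometric-series tail $(\mathrm{e}-1)^{-1}$, the $4^{k-1} \ge k$ type comparisons, and the cross-term from $(x+z)^2 \le 2x^2+2z^2$). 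The main obstacle is exactly this last balancing: making the per-slice remainder $k$-independent after peeling, which forces the specific scaling of $\delta$ and the precise numerical constant $1.1$.
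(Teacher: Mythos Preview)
Your peeling strategy is in the right spirit, but the execution has a concrete gap and differs from the paper in a way worth understanding.

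\textbf{The gap.} After applying AM--GM on slice $T_k$ you obtain $Z_u \le \theta d^2(u) + C(k)/(\theta n)$ with $C(k)$ containing a $2bk$ term, and you then propose to ``re-absorb $2k$ back into $d^2(u)$'' using the lower bound $\theta d^2(u) \ge \theta\delta^2 4^{k-1}$. But you have already \emph{spent} the full $\theta d^2(u)$ on the right-hand side via AM--GM; a lower bound on a term already sitting on the right does nothing to shrink that side. To salvage this you would have to avoid AM--GM altogether and directly verify $\delta 2^k(1+\sqrt{b(a+y+k)})/\sqrt{n} \le \theta\,\delta^2 4^{k-1} + C/(\theta n)$ uniformly in $k$, which forces $\delta$ to depend on $y$ (roughly $\delta \asymp (1+\sqrt{b(a+y)})/(\theta\sqrt n)$, not the $y$-free choice $\delta^2 = cb/(\theta^2 n)$ you wrote). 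Your union-bound bookkeeping is also off: the threshold slice already costs $\mathrm{e}^{-y}$, so together with $\sum_{k\ge1}\mathrm{e}^{-y-k}$ you land at roughly $1.6\,\mathrm{e}^{-y}$, not $\mathrm{e}^{-y}$.

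\textbf{How the paper avoids this.} Rather than bounding $Z_u$ directly against $\theta d^2(u)+C/(\theta n)$ slice by slice, the paper bounds the \emph{ratio} $\sup_u Z_u/(d^2(u)+x^2)$ by $(1+\sqrt{b(a+y)})/(x\sqrt n)$ with failure probability at most $1.7\,\mathrm{e}^{-y}$. Peeling with geometric ratio $\eta=1.18$ (not $2$) yields on slice $j$ the \emph{same} threshold $(1+\sqrt{b(a+y)})/(x\sqrt n)$ but a failure probability $\mathrm{e}^{-z_j}$ with $z_j$ growing quadratically in the slice index; summing gives the numerical constant $1.7$ after optimizing $(\eta,j_m,y_0)=(1.18,10,0.52)$. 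Only \emph{after} this uniform ratio bound does the paper choose $x=(1+\sqrt{b(a+y)})/(\theta\sqrt n)$, turning the ratio statement into the desired $Z_u \le \theta d^2(u)+\theta x^2 = \theta d^2(u)+(1+\sqrt{b(a+y)})^2/(\theta n)$, and finally shifts $y\to y+0.55$ to convert $1.7\,\mathrm{e}^{-y}$ into $\mathrm{e}^{-y}$ --- this shift, via $2\times 0.55=1.1$, is precisely where the $1.1$ comes from. The ratio trick is what lets the paper sidestep the $k$-dependent remainder you are struggling with.
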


\begin{proof}
Let $x > 0$. Let $\eta \in (1;2]$, $j_m \in \mathbb{N}^*$ and $y_0 \in \mathbb{R}$ be absolute constants that will be determined later. Then
\begin{align*}
&\mathbb{I} \left\{ \sup_{u \in T} \frac{Z_u}{d^2(u) + x^2} \geq \frac{1+\sqrt{b(a+y)}}{x\sqrt{n}} \right\} \\ 
&\leq
\mathbb{I} \left\{ \sup_{u \in T: d(u) \leq x} \frac{Z_u}{d^2(u) + x^2}  \geq \frac{1+\sqrt{b(a+y)}}{x\sqrt{n}} \right\} \\
&+ \sum_{j=0}^{+\infty} \mathbb{I} \left\{ \sup_{u \in T: \eta^j x \leq d(u) \leq \eta^{j+1}x} \frac{Z_u}{d^2(u) + x^2} \geq \frac{1+\sqrt{b(a+y)}}{x\sqrt{n}} \right\} \\
&\leq \mathbb{I} \left\{ \sup_{u \in T: d(u) \leq x} \frac{Z_u}{x^2}  \geq \frac{1+\sqrt{b(a+y)}}{x\sqrt{n}} \right\} \\
&+ \sum_{j=0}^{+\infty} \mathbb{I} \left\{ \sup_{u \in T: \eta^j x \leq d(u) \leq \eta^{j+1}x} \frac{Z_u}{(1 + \eta^{2j}) x^2} \geq \frac{1+\sqrt{b(a+y)}}{x\sqrt{n}} \right\} \\
&\leq \mathbb{I} \left\{ \sup_{u \in T: d(u) \leq x} Z_u  \geq \frac{x(1+\sqrt{b(a+y)})}{\sqrt{n}} \right\} \\
&+ \sum_{j=0}^{+\infty} \mathbb{I} \left\{ \sup_{u \in T: d(u) \leq \eta^{j+1}x} Z_u \geq (1 + \eta^{2j})\frac{x(1+\sqrt{b(a+y)})}{\sqrt{n}} \right\} \enspace . \numberthis \label{pealing_indicatrix_bound} \\
\end{align*}
Notice that:
\begin{align*}
(1 + \eta^{2j})\frac{x(1+\sqrt{b(a+y)})}{\sqrt{n}} &= x\eta^{j+1}
\frac{\eta^{2j}+1}{\eta^{j+1}} \frac{1+\sqrt{b(a+y)}}{\sqrt{n}} \\
&= x\eta^{j+1} \frac{1+\sqrt{b(a+z_j)}}{\sqrt{n}}\enspace ,  
\end{align*}
where:
\begin{align*}
z_j &= \frac{1}{b} \left( \frac{\eta^{2j}+1}{\eta^{j+1}} - 1 + \frac{\eta^{2j}+1}{\eta^{j+1}} \sqrt{b(a+y)} \right)^2 -a \\
&\geq \frac{1}{b} \left[ \frac{\eta^{2j}+1}{\eta^{j+1}} - 1 \right]^2 + \left( \frac{\eta^{2j}+1}{\eta^{j+1}} \right)^2 y \qquad \text{since } a \geq 0 \text{ and } \eta^{2j} + 1 \geq \eta^{j+1}\enspace . 
\end{align*}
Taking expectations in \eqref{pealing_indicatrix_bound} and using hypothesis (\ref{pealing_hyp}), we obtain:
\[ \mathbb{P}\left[ \sup_{u \in T} \frac{Z_u}{d^2(u) + x^2} \geq \frac{1+\sqrt{b(a+y)}}{x\sqrt{n}} \right] \leq  \mathrm{e}^{-y} + \sum_{j = 0}^{+ \infty} \mathrm{e}^{-z_j}\enspace . \]
So for any $y \geq y_0$ ,
\begin{align*}
 &\mathbb{P}\left[ \sup_{u \in T} \frac{Z_u}{d^2(u) + x^2} \geq \frac{1+\sqrt{b(a+y)}}{x\sqrt{n}} \right] \\
\qquad &\leq  \mathrm{e}^{-y} + \mathrm{e}^{-y} \sum_{j = 0}^{+ \infty} \exp \left( -\frac{1}{b} \left[ \frac{\eta^{2j}+1}{\eta^{j+1}} - 1 \right]^2 - \left( \frac{(\eta^{2j}+1)^2}{(\eta^{j+1})^2} - 1 \right)y  \right) \\
&\leq  \mathrm{e}^{-y} + \mathrm{e}^{-y} \sum_{j = 0}^{+ \infty} \exp \left( -\frac{1}{b} \left[ \frac{\eta^{2j}+1}{\eta^{j+1}} - 1 \right]^2 - \left( \frac{(\eta^{2j}+1)^2}{(\eta^{j+1})^2} - 1 \right)y_0  \right) \enspace . \numberthis \label{ineq_peal_1}
\end{align*}
Now, we have
\begin{align*}
\exp \left( -\frac{1}{b} \left[ \frac{\eta^{2j}+1}{\eta^{j+1}} - 1 \right]^2 - \left( \frac{(\eta^{2j}+1)^2}{(\eta^{j+1})^2} - 1 \right)y_0  \right) &\leq \exp \left(- \left( \frac{(\eta^{2j}+1)^2}{(\eta^{j+1})^2} - 1 \right)y_0  \right) \\
&\leq \exp \left(y_0 - \eta^{2(j-1)}y_0 \right) \enspace . \numberthis \label{ineq_peal_2} 
\end{align*}
Let $u$ denote the sequence $u_j = \exp \left(y_0 - \eta^{2(j-1)}y_0 \right)$.
Then for $j \geq j_m$,
\begin{align*}
\log u_{j+1} - \log u_j &= \eta^{2(j-1)}y_0 - \eta^{2j}y_0 \\
&= y_0 (1 - \eta^2)\eta^{2(j-1)} \\
&\leq y_0 (1 - \eta^2) \eta^{2(j_m -1)} \text{ since } \eta > 1 \enspace . 
\end{align*}
Thus,
\[ \forall j \geq j_m, \ \ u_{j+1} \leq u_j \exp\left(-y_0 (\eta^2-1) \eta^{2(j_m -1)} \right)\enspace . \]
Therefore, we have 
\[ \forall j \geq 0, \ \ u_{j+j_m} \leq u_{j_m} \exp\left(-jy_0 (\eta^2-1) \eta^{2(j_m -1))} \right) \]
and
\[ \sum_{j = j_m}^{+ \infty } u_j \leq u_{j_m} \left[1 - \exp\left(-y_0 (\eta^2-1) \eta^{2(j_m -1)} \right) \right]^{-1} \enspace . \]
It follows from \eqref{ineq_peal_1} and \eqref{ineq_peal_2} that for any $y \geq y_0$, since $b \leq 2$,
\begin{align*}
 \mathrm{e}^y &\mathbb{P}\left[ \sup_u \frac{Z_u}{d^2(u) + x^2} \geq \frac{1+\sqrt{b(a+y)}}{x\sqrt{n}} \right] \\
\qquad &\leq 1 + \sum_{j = 0}^{j_m} \exp \left( -\frac{1}{2} \left[ \frac{\eta^{2j}+1}{\eta^{j+1}} - 1 \right]^2 - \left( \frac{(\eta^{2j}+1)^2}{(\eta^{j+1})^2} - 1 \right)y_0  \right) \\
&+ \frac{\exp \left(y_0 - \eta^{2(j_m-1)}y_0 \right)}{1 - \exp\left(-y_0 (\eta^2-1) \eta^{2(j_m -1)} \right)}\enspace . \numberthis \label{ineq_peal_3}
\end{align*}
On the other hand, when $y \leq y_0$, trivially, 
\[  \mathbb{P}\left[ \sup_u \frac{Z_u}{d^2(u) + x^2} \geq \frac{1+\sqrt{b(a+y)}}{x\sqrt{n}} \right] \leq 1 \leq \mathrm{e}^{y_0} \mathrm{e}^{-y}. \]
Taking $\eta = 1.18, j_m = 10, y_0 = 0.52$, the right-hand side of \eqref{ineq_peal_3} evaluates to $1.6765 < 1.7$ whereas $\mathrm{e}^{y_0} \leq 1.683 < 1.7$. It follows that for all $y > 0$,
\begin{equation} \label{ineq_peal_4}
 \mathbb{P}\left[ \sup_u \frac{Z_u}{d^2(u) + x^2} \geq \frac{1+\sqrt{b(a+y)}}{x\sqrt{n}} \right] \leq 1.7 \mathrm{e}^{-y}\enspace .
\end{equation}

Now take $x = \frac{1 + \sqrt{b(a+y)}}{\theta \sqrt{n}}$ with $\theta > 0$.
We can rewrite:
\begin{align*}
\mathbb{P}\left[ \sup_u \frac{Z_u}{d^2(u) + x^2} \geq \frac{1 + \sqrt{b(a+y)}}{x\sqrt{n}} \right] &= \mathbb{P} \left[ \exists u \in T, \frac{Z_u}{d^2(u) + x^2} \geq \theta \right] \\
&= \mathbb{P} \left[ \exists u \in T, Z_u \geq \theta d^2(u) + \frac{1}{\theta n} \left(1 + \sqrt{b(a+y)}\right)^2 \right] \\
&\geq \mathbb{P} \left[ \exists u \in T, Z_u \geq \theta d^2(u) + \frac{2 + 2b(a+y)}{\theta n}\right]\enspace .
\end{align*}
It follows from Equation~\eqref{ineq_peal_4}, with $y$ replaced by $y+0.55$, that
\begin{align*}
 \mathbb{P} \left[ \exists u \in T, Z_u \geq \theta d^2(u) + \frac{2 + b(1.1 + 2(a + y))}{\theta n}  \right] &\leq 1.7 \mathrm{e}^{-0.55} \mathrm{e}^{-y} \\
 &\leq \mathrm{e}^{-y}\enspace .
\end{align*}
\end{proof}

We need two other technical lemmas in the proof of Theorem~\ref{rkhs_thm}.
 
\begin{lemma} \label{ridge_reg_prop}
For any nonnegative, continuous convex function $h$ over a Hilbert space $\mathcal{H}$, and any $\lambda \in \mathbb{R}_+$,
the elements of the regularization path,
\[t_\lambda = \argmin_{t \in \mathcal{H}} \left\{ h(t) + \lambda \hNorm{t}^2 \right\}\enspace ,\] satisfy, for any $(\lambda, \mu) \in \mathbb{R}^2$ such that $0 < \lambda \leq \mu$,
\[ \hNorm{t_\lambda - t_\mu}^2 \leq \hNorm{t_\lambda}^2 - \hNorm{t_\mu}^2\enspace .\]
\end{lemma}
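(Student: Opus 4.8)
The plan is to extract two elementary consequences of the defining optimality properties of $t_\lambda$ and $t_\mu$ and then combine them by a short Hilbert-space computation. Write $f_\nu(t) = h(t) + \nu\hNorm{t}^2$ for $\nu > 0$; since $h$ is convex, continuous and nonnegative, $f_\nu$ is coercive, lower semicontinuous and strictly convex (the strict convexity coming from $\hNorm{\cdot}^2$), so $t_\nu = \argmin_{t \in \mathcal{H}} f_\nu(t)$ is well defined.

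First I would establish the \emph{monotonicity of the norm along the path}: if $0 < \lambda \leq \mu$ then $\hNorm{t_\mu} \leq \hNorm{t_\lambda}$. This follows by adding the optimality inequalities $f_\lambda(t_\lambda) \leq f_\lambda(t_\mu)$ and $f_\mu(t_\mu) \leq f_\mu(t_\lambda)$; the $h$-terms cancel and one is left with $(\mu - \lambda)\bigl(\hNorm{t_\lambda}^2 - \hNorm{t_\mu}^2\bigr) \geq 0$, which gives the claim since $\mu \geq \lambda$. Second, and this is the key step, I claim that
\[ (\mu + \lambda)\,\langle t_\lambda, t_\mu\rangle_{\mathcal{H}} \;\geq\; \mu\hNorm{t_\mu}^2 + \lambda\hNorm{t_\lambda}^2 \enspace . \]
To prove it, fix $s \in (0,1]$ and use that $t_\mu$ minimizes $f_\mu$ at the point $(1-s)t_\mu + s t_\lambda \in \mathcal{H}$; bounding $h\bigl((1-s)t_\mu + s t_\lambda\bigr) \leq (1-s)h(t_\mu) + s h(t_\lambda)$ by convexity, expanding $\hNorm{t_\mu + s(t_\lambda - t_\mu)}^2 = \hNorm{t_\mu}^2 + 2s\langle t_\mu, t_\lambda - t_\mu\rangle_{\mathcal{H}} + s^2\hNorm{t_\lambda - t_\mu}^2$, dividing by $s$ and letting $s \to 0^+$ yields $h(t_\mu) - h(t_\lambda) \leq 2\mu\langle t_\mu, t_\lambda - t_\mu\rangle_{\mathcal{H}}$. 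Exchanging the roles of $\lambda$ and $\mu$ gives $h(t_\lambda) - h(t_\mu) \leq 2\lambda\langle t_\lambda, t_\mu - t_\lambda\rangle_{\mathcal{H}}$, and adding the two inequalities (the $h$-terms again cancel) produces exactly the displayed bound.

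Finally I would combine the two steps. Since $\hNorm{t_\lambda}^2 \geq \hNorm{t_\mu}^2$ and $\lambda > 0$, the right-hand side of the displayed inequality is at least $(\mu+\lambda)\hNorm{t_\mu}^2$, so $\langle t_\lambda, t_\mu\rangle_{\mathcal{H}} \geq \hNorm{t_\mu}^2$; expanding the square then gives $\hNorm{t_\lambda - t_\mu}^2 = \hNorm{t_\lambda}^2 + \hNorm{t_\mu}^2 - 2\langle t_\lambda, t_\mu\rangle_{\mathcal{H}} \leq \hNorm{t_\lambda}^2 - \hNorm{t_\mu}^2$, as required. The only slightly delicate point is the limiting argument $s \to 0^+$ in the key step, which relies on the differentiability of $\hNorm{\cdot}^2$ and the convexity of $h$; everything else is routine. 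An alternative to that step, for readers who prefer it, is to invoke the monotonicity of the subdifferential of the convex function $h$ together with the first-order condition $-2\nu\, t_\nu \in \partial h(t_\nu)$ (valid by the Moreau--Rockafellar sum rule, as $\hNorm{\cdot}^2$ is continuous), which directly gives $\langle \mu t_\mu - \lambda t_\lambda,\, t_\lambda - t_\mu\rangle_{\mathcal{H}} \geq 0$, i.e. the same inequality.
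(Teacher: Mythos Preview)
Your proof is correct. Both you and the paper ultimately reduce the lemma to the inequality $\langle t_\lambda, t_\mu\rangle_{\mathcal{H}} \geq \hNorm{t_\mu}^2$, but you reach it by a genuinely different route. The paper argues geometrically: it observes that $t_\mu$ is the metric projection of $0$ onto the closed convex sublevel set $\{t : h(t) \leq h(t_\mu)\}$, shows $h(t_\lambda) \leq h(t_\mu)$ by combining the two defining optimality inequalities, concludes that $t_\lambda$ lies in that set, and then invokes the obtuse-angle property of projections to get $\langle -t_\mu, t_\mu - t_\lambda\rangle_{\mathcal{H}} \geq 0$ directly. Your argument is analytic: from first-order optimality (equivalently, monotonicity of $\partial h$) you derive the sharper two-parameter inequality $(\mu+\lambda)\langle t_\lambda, t_\mu\rangle_{\mathcal{H}} \geq \mu\hNorm{t_\mu}^2 + \lambda\hNorm{t_\lambda}^2$, and then weaken it using the norm monotonicity $\hNorm{t_\mu} \leq \hNorm{t_\lambda}$. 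The paper's approach is slightly shorter and needs no limiting $s \to 0^+$ step; yours proves a stronger intermediate inequality along the way (which you do not fully exploit here) and would transfer more readily to strongly convex regularizers other than $\hNorm{\cdot}^2$, where the projection interpretation is less immediate.
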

\begin{proof}
By \cite[Theorem 2.11]{barbu2012convexity}, $t_\lambda$ exists for any $\lambda \in \mathbb{R}_{+}$
. Moreover, it is unique by strong convexity of $\Norm{\cdot}_{\mathcal{H}}^2$. 
For a closed convex set $\mathcal{C} \subset \mathcal{H}$, let $\Pi_{\mathcal{C}}$ denote the orthogonal projection onto $\mathcal{C}$.

Let $\mu > 0$. The set $\{ t : h(t) \leq h(t_\mu) \}$ is closed by continuity of $h$ and convex by convexity of $h$.
Moreover, for any $t \in \mathcal{H}$ such that $h(t) \leq h(t_\mu)$,
\begin{align*}
 \mu \hNorm{t_\mu}^2 &\leq h(t_\mu) - h(t) + \mu \hNorm{t_\mu}^2 \\
 &\leq \mu \hNorm{t}^2 \text{ by definition of } t_\mu\enspace .
\end{align*}
Therefore, $t_\mu = \Pi_{\{ t : h(t) \leq h(t_\mu) \}}(0)$.
Let $\lambda \in (0;\mu)$. 
By definition of $t_\lambda, t_\mu$,
\begin{align*}
 \frac{h(t_\mu)}{\mu} + \hNorm{t_\mu}^2 &\leq \frac{h(t_\lambda)}{\mu} + \hNorm{t_\lambda}^2 \\
 &= \frac{h(t_\lambda)}{\lambda} + \hNorm{t_\lambda}^2 + \left(\frac{1}{\mu} - \frac{1}{\lambda} \right) h(t_\lambda) \\
 &\leq \frac{h(t_\mu)}{\lambda} + \hNorm{t_\mu}^2 + \left(\frac{1}{\mu} - \frac{1}{\lambda} \right) h(t_\lambda)\enspace ,
\end{align*}
which implies $(\mu^{-1} - \lambda^{-1}) h(t_\mu) \leq (\mu^{-1} - \lambda^{-1}) h(t_\lambda)$ and thus $h(t_\lambda) \leq h(t_\mu)$ since $\lambda < \mu$.
For a projection $\Pi_{\mathcal{C}}$, it is well known that:
\[ \forall t \in \mathcal{H}, \forall t' \in \mathcal{C}, \langle t - \Pi_{\mathcal{C}}(t), \Pi_{\mathcal{C}}(t) - t' \rangle_{\mathcal{H}} \geq 0\enspace . \]
Choosing $\mathcal{C} = \{ t : h(t) \leq h(t_\mu) \}, t' = t_\lambda \in \mathcal{C}, t = 0$  yields $\langle - t_\mu, t_\mu - t_\lambda \rangle_{\mathcal{H}} \geq 0.$
Therefore 
\begin{align*}
 \hNorm{t_\lambda}^2 &= \hNorm{t_\mu + (t_\lambda - t_\mu)}^2 \\
 &= \hNorm{t_\mu}^2 + \hNorm{t_\lambda - t_\mu}^2 + 2 \langle t_\mu ,t_\lambda - t_\mu \rangle_{\mathcal{H}} \\
 &\geq \hNorm{t_\mu}^2 + \hNorm{t_\lambda - t_\mu}^2\enspace .
\end{align*}
\end{proof}

\begin{lemma} \label{fixed_pt}
Let $(b,c) \in \mathbb{R}_+^2$
and $l_{b,c}(x) = bx + c$. Let $\delta$ be given by Definition~\ref{def_delta}. For any $r \in \mathbb{R}_+$,
\begin{equation} \label{lin_fp}
 \delta^2(l_{b,c},r) \leq \frac{b^2}{r^2} + \frac{2c}{r}\enspace .
\end{equation}
For $(a,b,c) \in \mathbb{R}_+^3$, let
$g_{a,b,c}(x) = ax \vee \left[bx^3 + cx^2 \right]^{\frac{1}{2}}$.
For any $r \in \mathbb{R}_+$,
\begin{equation} \label{rkhs_fp}
 \delta^2 \bigl( g_{a,b,c},r \bigr) \leq \frac{a^2}{r^2} \vee \left[ \frac{b^2}{r^4} + \frac{2c}{r^2} \right] \leq \frac{a^2}{r^2} + \frac{b^2}{r^4} + \frac{2c}{r^2}\enspace . 
\end{equation}
\end{lemma}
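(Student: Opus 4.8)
The plan is to prove the two displays in turn, deducing \eqref{rkhs_fp} from \eqref{lin_fp}, and throughout I assume $r>0$ (the only case used in the applications; for $r=0$ the set in Definition~\ref{def_delta} is empty unless the function vanishes, so the statement is degenerate). The whole argument is elementary bookkeeping with the infimum defining $\delta$, so there is no substantial obstacle; the only two points needing a line of justification are flagged at the end.

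For the linear inequality \eqref{lin_fp}, I would first locate the relevant threshold explicitly. Since $b,c\ge 0$ and $r>0$, the quadratic $x\mapsto rx^2-bx-c$ has largest root $x_+=\bigl(b+\sqrt{b^2+4rc}\bigr)/(2r)\ge 0$, and $rx^2-bx-c\ge 0$ for every $x\ge x_+$; equivalently $l_{b,c}(x)\le rx^2$ on $[x_+,+\infty)$, so Definition~\ref{def_delta} gives $\delta(l_{b,c},r)\le x_+$. It then remains to bound $x_+^2$. Expanding yields $x_+^2=\bigl(b^2+2rc+b\sqrt{b^2+4rc}\bigr)/(2r^2)$, so \eqref{lin_fp} reduces to the elementary inequality $b\sqrt{b^2+4rc}\le b^2+2rc$; squaring both (nonnegative) sides turns this into $b^4+4b^2rc\le b^4+4b^2rc+4r^2c^2$, which holds trivially.

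For \eqref{rkhs_fp}, I would write $g_{a,b,c}=w_1\vee w_2$ with $w_1(x)=ax$ and $w_2(x)=\bigl[bx^3+cx^2\bigr]^{1/2}$, and use the immediate fact that $\delta(w_1\vee w_2,r)\le\delta(w_1,r)\vee\delta(w_2,r)$ (if $x$ exceeds both thresholds it exceeds the threshold of the maximum). One has $\delta(w_1,r)\le a/r$, since $ax\le rx^2$ whenever $x\ge a/r$. For $w_2$, note that for $x>0$ the inequality $w_2(x)\le rx^2$ is, after squaring and dividing by $x^2$, equivalent to $bx+c\le r^2x^2$, i.e. $l_{b,c}(x)\le r^2x^2$ (the value at $x=0$ is $0\le 0$, hence harmless), so $\delta(w_2,r)\le\delta(l_{b,c},r^2)$, and \eqref{lin_fp} applied with $r$ replaced by $r^2$ gives $\delta(w_2,r)^2\le b^2/r^4+2c/r^2$. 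Combining, $\delta(g_{a,b,c},r)^2\le (a^2/r^2)\vee\bigl(b^2/r^4+2c/r^2\bigr)$, and the second bound in \eqref{rkhs_fp} follows from $u\vee v\le u+v$ for $u,v\ge 0$. The two points deserving explicit care are the elementary inequality $b\sqrt{b^2+4rc}\le b^2+2rc$ and the reduction of the $w_2$-threshold to the linear problem at parameter $r^2$; everything else is routine.
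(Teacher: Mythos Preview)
Your proof is correct and follows essentially the same approach as the paper's: both locate the positive root of the quadratic $rx^2-bx-c$ to handle \eqref{lin_fp}, and both reduce \eqref{rkhs_fp} to the linear case at parameter $r^2$ via the equivalence $w_2(x)\le rx^2 \Leftrightarrow l_{b,c}(x)\le r^2x^2$. The only cosmetic differences are that the paper invokes Remark~\ref{rmk_delta} to assert $\delta(l_{b,c},r)$ \emph{equals} the root and then bounds the square via $(u+v)^2\le 2u^2+2v^2$, whereas you bound $\delta\le x_+$ directly and reduce $x_+^2\le b^2/r^2+2c/r$ to the AM--GM inequality $b\sqrt{b^2+4rc}\le b^2+2rc$; these are equivalent bookkeeping choices.
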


\begin{proof}
Since $x \mapsto \frac{l_{b,c}(x)}{x}$ is nonincreasing, we have by Remark~\ref{rmk_delta}:
\begin{align*}
b \delta(l_{b,c},r) + c = r \delta^2(l_{b,c},r), \text{ i.e} \\
\delta^2(l_{b,c},r) - \frac{b \delta(l_{b,c},r)}{r} - \frac{c}{r} &= 0\enspace .
\end{align*}
Hence $\delta(l_{b,c},r) = \frac{b}{2 r} + \frac{1}{2} \sqrt{\frac{b^2}{r^2} + \frac{4c}{r}}$. Thus
\[
\delta^2(l_{b,c},r) \leq 2 \left( \frac{b^2}{4r^2} + \frac{b^2}{4r^2} + \frac{c}{r} \right) \leq \frac{b^2}{r^2} + \frac{2c}{r}.
\]
This proves \eqref{lin_fp}. 
For any $x > 0$, $g_{a,b,c}(x) \leq rx^2$ is equivalent to
\begin{align}
 ax &\leq rx^2 \label{fixed_pt_cond_1}\\
 \text{ and } bx^3 + cx^2 &\leq r^2 x^4 \enspace .\label{fixed_pt_cond_2}
\end{align}
Eq.~\eqref{fixed_pt_cond_1} is equivalent to $x \geq \frac{a}{r}$. On the other hand, 
\begin{align*}
x > \left[\frac{b^2}{r^4} + \frac{2c}{r^2} \right]^{\frac{1}{2}} &\implies x > \delta(l_{b,c},r^2) \text{ by } \eqref{lin_fp} \\
&\implies bx + c \leq r^2 x^2 \text{ by Definition~\ref{def_delta} }  \\
&\implies \eqref{fixed_pt_cond_2}.
 \end{align*}
Therefore, whenever
\[ x > \frac{a}{r} \vee \left[\frac{b^2}{r^4} + \frac{2c}{r^2} \right]^{\frac{1}{2}}\enspace ,\]
it holds that $g_{a,b,c}(x) \leq rx^2$.
\eqref{rkhs_fp} follows by Definition~\ref{def_delta}.
\end{proof}

\subsection{Uniform control on the empirical process}
From now on until the end of the proof, the notation and hypotheses of Theorem~\ref{rkhs_thm} are used. 
Recall also the notation $g \circ t: (x,y) \mapsto g(t(x),y)$, for any $g: \mathbb{R} \times \mathbb{R} \rightarrow \mathbb{R}$ and $t: \cX \rightarrow \mathbb{R}$. 
Fix a training set $D_{n_t}$.
Start with the following definition.
\begin{Definition} \label{def_d_y}
 For $t_1,t_2 \in \mathcal{H}$, let
 \begin{equation}
  d(t_1,t_2) = \min_{\lambda \in \Lambda} \hNorm{t_1 - \bayes_\lambda} + \hNorm{t_1 - t_2} \enspace , \label{def_d}
 \end{equation}
where $\bayes_\lambda = \argmin_{t \in \mathcal{H}} \left\{ P (\ctrain\circ t ) + \lambda \hNorm{t}^2 \right\} $.
Furthermore, let
\[ \widehat{y} = \frac{\lambda_m n_t}{32\kappa L^2} \times \sup_{(t_1,t_2) \in \mathcal{H}^2} \left\{(P_{n_t} - P) (\ctrain\circ t_1 - \ctrain\circ t_2) - \frac{\lambda_m}{2} d(t_1,t_2)^2 \right\}\enspace , \]
so that
\begin{equation} \label{em_proc_ub}
 \forall (t_1,t_2) \in \mathcal{H}^2, (P_{n_t} - P) (\ctrain\circ t_1 - \ctrain\circ t_2) \leq \frac{\lambda_m}{2} d(t_1,t_2)^2 + \frac{32\kappa L^2 \widehat{y}}{\lambda_m n_t}\enspace .
\end{equation}
\end{Definition}
We then have  the following bounds on $\widehat{y}$.
\begin{claim} \label{y_bound}
 For all $x \geq 0$,
 \[ \mathbb{P} \bigl( \widehat{y} \geq 2.6 + \log \lvert \Lambda \rvert + x \bigr) \leq \mathrm{e}^{-x}\enspace .\]
 In particular, $\mathbb{E}[\widehat{y} ] \leq 4 + \log \lvert \Lambda \rvert$.
\end{claim}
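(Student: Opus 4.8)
The plan is to express $\widehat{y}$ in terms of a supremum of an empirical process over pairs of RKHS balls, then apply Proposition~\ref{erm_rkhs} together with the peeling Lemma~\ref{pealing_lemma}. First I would observe that, by Lemma~\ref{ridge_reg_prop}, for any $\lambda \leq \mu$ the regularization path satisfies $\hNorm{\bayes_\lambda - \bayes_\mu}^2 \leq \hNorm{\bayes_\lambda}^2 - \hNorm{\bayes_\mu}^2 \leq \hNorm{\bayes_{\lambda_m}}^2$, and in particular each $\bayes_\lambda$ for $\lambda \in \Lambda$ lies in a ball $B_{\mathcal H}(0, \hNorm{\bayes_{\lambda_m}})$; more to the point, the ``centering'' quantity $\min_{\lambda \in \Lambda} \hNorm{t_1 - \bayes_\lambda}$ in the definition of $d(t_1,t_2)$ controls how far $t_1$ is from the finite set $\{\bayes_\lambda : \lambda \in \Lambda\}$. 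The key point is that for fixed $r \ge 0$, the event $\{d(t_1,t_2) \le r\}$ forces $t_1 \in B_{\mathcal H}(\bayes_\lambda, r)$ for some $\lambda \in \Lambda$ and $t_2 \in B_{\mathcal H}(t_1, r) \subset B_{\mathcal H}(\bayes_\lambda, 2r)$, so
\[
\sup_{(t_1,t_2) : d(t_1,t_2) \le r} (P_{n_t} - P)(\ctrain \circ t_1 - \ctrain \circ t_2)
\le \sum_{\lambda \in \Lambda} \sup_{(t_1,t_2) \in B_{\mathcal H}(\bayes_\lambda, 2r)^2} (P_{n_t} - P)(\ctrain \circ t_1 - \ctrain \circ t_2) .
\]
Here I use that the left-hand supremum, being a sum of nonnegative-after-centering terms, can be bounded by the sum over $\lambda$; strictly, one splits on which $\lambda$ achieves the minimum and bounds the resulting supremum by the union.

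Next I would apply Proposition~\ref{erm_rkhs} with $h = \ctrain$, $t_0 = \bayes_\lambda$ and radius $2r$: for each $\lambda$ and each $y>0$, with probability at least $1 - \mathrm{e}^{-y}$,
\[
\sup_{(t_1,t_2) \in B_{\mathcal H}(\bayes_\lambda, 2r)^2} (P_{n_t} - P)(\ctrain \circ t_1 - \ctrain \circ t_2) \le 2(2 + \sqrt{2y}) L \frac{2 r \sqrt{\kappa}}{\sqrt{n_t}} .
\]
A union bound over the $\lvert \Lambda \rvert$ values of $\lambda$, replacing $y$ by $y + \log \lvert \Lambda \rvert$, gives: for all $r, y \ge 0$, with probability at least $1 - \mathrm{e}^{-y}$,
\[
\sup_{(t_1,t_2):\, d(t_1,t_2) \le r} (P_{n_t} - P)(\ctrain \circ t_1 - \ctrain \circ t_2)
\le \frac{4 L \sqrt{\kappa}\, r}{\sqrt{n_t}} \bigl(2 + \sqrt{2(y + \log \lvert \Lambda \rvert)}\bigr) .
\]
After rescaling $Z_{(t_1,t_2)} := \frac{\lambda_m}{4 L \sqrt{\kappa}} (P_{n_t} - P)(\ctrain \circ t_1 - \ctrain \circ t_2)$ and $d(\cdot)$ unchanged, this puts me in position to invoke Lemma~\ref{pealing_lemma}: the displayed bound is of the form $\mathbb P[\sup_{d(u) \le r} Z_u \ge r \tfrac{1 + \sqrt{b(a+y)}}{\sqrt{n_t}}] \le \mathrm e^{-y}$ with $a = \log \lvert \Lambda \rvert$ and a suitable constant $b \le 2$ (here one absorbs the factor $2$ multiplying $\sqrt{2}$ and checks the ``$1+\sqrt{b(a+y)}$'' normalization, which is where the constant $2.6$ will come from). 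I need to be slightly careful with the exact constants: Proposition~\ref{erm_rkhs} gives $2(2+\sqrt{2y})$, i.e. $4 + 2\sqrt{2y} = 4 + \sqrt{8y}$, and I must fit this into the form $C(1 + \sqrt{b(a+y)})$ after also incorporating $\log|\Lambda|$ into $a$; matching $32\kappa L^2 / \lambda_m$ as the scaling constant in Definition~\ref{def_d_y} is consistent with pulling out $4L\sqrt\kappa/\sqrt{n_t}$ and then squaring (the peeling lemma outputs $Z_u \le \theta d^2(u) + \frac{2 + b[1.1 + 2(a+y)]}{\theta n_t}$, and choosing $\theta$ to match the $\frac{\lambda_m}{2} d^2$ coefficient produces the $\frac{32 \kappa L^2}{\lambda_m n_t}$ factor). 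Reading off the resulting tail bound for $\widehat y$ and checking that the absolute constant is at most $2.6$ is then a direct numerical verification.

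Finally, the moment bound $\mathbb E[\widehat y] \le 4 + \log|\Lambda|$ follows from the tail bound by integration: $\mathbb E[\widehat y] \le (2.6 + \log|\Lambda|) + \int_0^\infty \mathbb P(\widehat y \ge 2.6 + \log|\Lambda| + x)\,\mathrm dx \le 2.6 + \log|\Lambda| + 1 = 3.6 + \log|\Lambda| \le 4 + \log|\Lambda|$ (using that $\widehat y \ge 0$, which holds since one may take $t_1 = t_2$ in the defining supremum). \textbf{The main obstacle} I anticipate is bookkeeping the constants through the chain Proposition~\ref{erm_rkhs} $\to$ union bound over $\Lambda$ $\to$ peeling Lemma~\ref{pealing_lemma} $\to$ optimization in $\theta$, so that the final absolute constant comes out below $2.6$; the structural steps (reducing to balls centered at the $\bayes_\lambda$, using Lemma~\ref{ridge_reg_prop} to keep those centers in a fixed ball, and the peeling) are routine, but the numerical fit of $4 + \sqrt{8y}$ into the $(1 + \sqrt{b(a+y)})$ template with $b \le 2$ requires care, possibly a small inflation of $b$ or a shift in $a$ absorbed into the $2.6$.
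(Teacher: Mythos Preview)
Your approach is essentially the paper's: reduce the supremum over $\{d(t_1,t_2)\le r\}$ to a maximum over balls centered at the $\bayes_\lambda$, apply Proposition~\ref{erm_rkhs} with a union bound over $\lambda\in\Lambda$, then invoke the peeling Lemma~\ref{pealing_lemma}. Two remarks to tighten it.

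First, the detour through Lemma~\ref{ridge_reg_prop} is unnecessary. The $\bayes_\lambda$ are deterministic points, and Proposition~\ref{erm_rkhs} applies to any fixed center $t_0$; no control on $\hNorm{\bayes_\lambda}$ is needed anywhere in this claim.

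Second, your inclusion $t_2\in B_{\mathcal H}(\bayes_\lambda,2r)$ is wasteful and is precisely what would spoil the constant. If $\hNorm{t_1-\bayes_\lambda}+\hNorm{t_1-t_2}\le r$ then each summand is at most $r$, and by the triangle inequality $\hNorm{t_2-\bayes_\lambda}\le\hNorm{t_2-t_1}+\hNorm{t_1-\bayes_\lambda}\le r$; so both $t_1,t_2\in B_{\mathcal H}(\bayes_\lambda,r)$. With radius $r$ (not $2r$), Proposition~\ref{erm_rkhs} and the union bound give, for $Z_{(t_1,t_2)}:=\tfrac{1}{4L\sqrt\kappa}(P_{n_t}-P)(\ctrain\circ t_1-\ctrain\circ t_2)$, the peeling hypothesis with $b=\tfrac12$ and $a=\log\lvert\Lambda\rvert$. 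Choosing $\theta=\lambda_m/(8L\sqrt\kappa)$ in Lemma~\ref{pealing_lemma} then produces exactly the normalization $32\kappa L^2/(\lambda_m n_t)$ and the numerical constant $2+\tfrac12\cdot 1.1=2.55\le 2.6$. If you keep $2r$, the rescaling factor doubles to $8L\sqrt\kappa$, the matching $\theta$ halves, and you end up with $128\kappa L^2/(\lambda_m n_t)$ instead of $32$, i.e.\ a bound on $\widehat y$ four times too large. Your integration for $\mathbb E[\widehat y]$ is fine and gives $3.6+\log\lvert\Lambda\rvert$, as in the paper.
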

\begin{proof}
Let $(t_1,t_2) \in \mathcal{H}$ be such that $d(t_1,t_2) \leq r$. 
Let $\lambda \in \Lambda$ be such that $\hNorm{t_1 - \bayes_\lambda} + \hNorm{t_1 - t_2} \leq r$.
By the triangle inequality, $t_1,t_2 \in B(\bayes_\lambda, r).$
Hence
\begin{equation} \label{eq_d_bd}
 \sup_{(t_1,t_2): d(t_1,t_2) \leq r} \left\{ (P_{n_t} - P)(\ctrain\circ t_1 - \ctrain\circ t_2) \right\} \leq \max_{\lambda \in \Lambda} \sup_{(t_1,t_2) \in B(\bayes_\lambda, r)^2} (P_{n_t} - P) (\ctrain\circ t_1 - \ctrain\circ t_2).
\end{equation}
From Proposition~\ref{erm_rkhs} and the union bound, it follows that, for any $x \geq 0$,
\[ \mathbb{P} \left[ \max_{\lambda \in \Lambda} \sup_{(t_1,t_2) \in B(\bayes_\lambda, r)^2} (P_{n_t} - P) (\ctrain\circ t_1 - \ctrain\circ t_2) \geq 2\left( 2 + \sqrt{2(x+\log \lvert \Lambda \rvert)} \right) L \frac{r\sqrt{\kappa}}{\sqrt{n_t}} \right] \leq \mathrm{e}^{-x}. \]
It follows by  Equation~\eqref{eq_d_bd} that, for all $x \geq 0$,
\[ \mathbb{P} \left[ \sup_{(t_1,t_2) : d(t_1,t_2) \leq r} \frac{1}{4L\sqrt{\kappa}} (P_{n_t} - P)(\ctrain\circ t_1 - \ctrain\circ t_2) \geq \left( 1 + \sqrt{\frac{x+\log \lvert \Lambda \rvert}{2}} \right) \frac{r}{\sqrt{n_t}} \right] \leq \mathrm{e}^{-x}.\]
By Lemma~\ref{pealing_lemma} with $\theta = \frac{\lambda_m}{8L\sqrt{\kappa}}$, $a = \log \lvert \Lambda \rvert$, $b =\frac{1}{2}$, with probability larger than $1 - \mathrm{e}^{-x}$,
\[ \forall (t_1,t_2), (P_{n_t} - P)(\ctrain\circ t_1 - \ctrain\circ t_2) \leq \frac{\lambda_m}{2} d(t_1,t_2)^2 + 32L^2 \frac{\kappa(2.6 + x + \log \lvert \Lambda \rvert)}{\lambda_m n_t}. \]
On the same event, $\widehat{y} \leq 2.6 + x + \log \lvert \Lambda \rvert$ by Definition~\ref{def_d_y}.

Therefore, by Lemma~\ref{prob_to_exp}, $\mathbb{E}[\widehat{y}] \leq 3.6 + \log \lvert \Lambda \rvert$.
\end{proof}

Definition~\ref{def_d_y} and Proposition~\ref{y_bound} together imply a uniform control on the empirical process thanks to the drift term $\lambda_m d(t_1,t_2)^2$, whereas Proposition~\ref{prop_unif_bound} only gave a bound on an RKHS ball of fixed radius.

\subsection{Verifying the assumptions of Theorem~\ref{agcv_mean}} 
Theorem~\ref{rkhs_thm} is a consequence of Theorem~\ref{agcv_mean}. For all $\lambda \in \Lambda$, let $\widehat{t}_\lambda = \learnrule_\lambda(D_{n_t})$, where $\learnrule_\lambda$ is given by Definition~\ref{def_kern}. To verify the assumptions of Theorem~\ref{agcv_mean}, adequate functions $(\widehat{w}_{i,j})_{(i,j) \in \{1;2\}^2}$ must be found such that for $i \in \{ 1;2\}$, $H \left(\widehat{w}_{i,1},\widehat{w}_{i,2}, (\widehat{t}_\lambda)_{\lambda \in \Lambda} \right)$ holds almost surely . This is the purpose of this section.

The core of the proof of Theorem~\ref{rkhs_thm} lies in the following deterministic claim.

\begin{claim} \label{prop_unif_bound}
For all $\lambda, \mu \in \Lambda$ such that $\lambda \leq \mu$,
\[ \NormInfinity{\widehat{t}_\lambda - \widehat{t}_\mu}^2 \leq \frac{\kappa C}{\lambda_m} \loss{\widehat{t}_\mu} + 96L^2 \frac{\kappa^2 \widehat{y}}{\lambda_m^2 n_t}\enspace .  \]
\end{claim}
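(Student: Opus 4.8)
The plan is to reduce the claim to a bound on the RKHS distance $A:=\hNorm{\widehat{t}_\lambda-\widehat{t}_\mu}^2$ through Lemma~\ref{rkhs_norm_bound} (which gives $\NormInfinity{\widehat{t}_\lambda-\widehat{t}_\mu}^2\le\kappa A$), and then to control $A$ by playing the strong convexity of the regularized objectives against the uniform empirical-process bound \eqref{em_proc_ub}. First I would use that, for each $\nu>0$, the functional $\Phi^\nu(t)=P_{n_t}(\ctrain\circ t)+\nu\hNorm{t}^2$ is strongly convex --- its non-smooth part $P_{n_t}(\ctrain\circ\cdot)$ is convex and finite ($\ctrain$ being $L$-Lipschitz) and $\hNorm{\cdot}^2$ is $2$-strongly convex by the parallelogram identity --- so it has a unique minimizer and $\Phi^\lambda(\widehat{t}_\mu)\ge\Phi^\lambda(\widehat{t}_\lambda)+\lambda A$. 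Expanding, this reads $\lambda\hNorm{\widehat{t}_\lambda}^2-\lambda\hNorm{\widehat{t}_\mu}^2+\lambda A\le P_{n_t}(\ctrain\circ\widehat{t}_\mu)-P_{n_t}(\ctrain\circ\widehat{t}_\lambda)$; since $\lambda\le\mu$, Lemma~\ref{ridge_reg_prop} applied to the regularization path of the convex function $P_{n_t}(\ctrain\circ\cdot)$ gives $\hNorm{\widehat{t}_\lambda}^2-\hNorm{\widehat{t}_\mu}^2\ge A$, and combining with $\lambda\ge\lambda_m$ yields the key inequality
\[
2\lambda_m A\;\le\;2\lambda A\;\le\;P_{n_t}(\ctrain\circ\widehat{t}_\mu)-P_{n_t}(\ctrain\circ\widehat{t}_\lambda)\enspace .
\]
Obtaining the factor $2$ here --- strong convexity rather than mere optimality of $\widehat{t}_\lambda$ --- is what will make the leading constant come out as $\kappa C/\lambda_m$; this is the delicate point.

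Next I would split the right-hand side as $[P(\ctrain\circ\widehat{t}_\mu)-P(\ctrain\circ\widehat{t}_\lambda)]+(P_{n_t}-P)(\ctrain\circ\widehat{t}_\mu-\ctrain\circ\widehat{t}_\lambda)$. For the population part, $P(\ctrain\circ\widehat{t}_\lambda)\ge P(\ctrain\circ\bayes)=\risk_\ctrain(\bayes)$ because $\bayes$ minimizes $\risk_\ctrain$ under $Comp_C(\cpred,\ctrain)$, so this part is at most $\risk_\ctrain(\widehat{t}_\mu)-\risk_\ctrain(\bayes)\le C\loss{\widehat{t}_\mu}$ by that same hypothesis. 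For the fluctuation part, I would apply \eqref{em_proc_ub} of Definition~\ref{def_d_y} with $t_1=\widehat{t}_\mu$, $t_2=\widehat{t}_\lambda$, bounding it by $\tfrac{\lambda_m}{2}d(\widehat{t}_\mu,\widehat{t}_\lambda)^2+\tfrac{32\kappa L^2\widehat{y}}{\lambda_m n_t}$, and then estimate $d(\widehat{t}_\mu,\widehat{t}_\lambda)\le\hNorm{\widehat{t}_\mu-\bayes_\mu}+\hNorm{\widehat{t}_\mu-\widehat{t}_\lambda}=\hNorm{\widehat{t}_\mu-\bayes_\mu}+\sqrt{A}$ by choosing the index $\mu\in\Lambda$ in the minimum defining $d$ in \eqref{def_d}, whence $d(\widehat{t}_\mu,\widehat{t}_\lambda)^2\le2\hNorm{\widehat{t}_\mu-\bayes_\mu}^2+2A$.

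It then remains to bound $\hNorm{\widehat{t}_\mu-\bayes_\mu}^2$, the distance at the common parameter $\mu$ between the empirical regularization path and its population counterpart $\bayes_\mu=\argmin_{t\in\mathcal{H}}\{P(\ctrain\circ t)+\mu\hNorm{t}^2\}$. I would get this by the same device applied on both sides: the strong convexity of $\Phi^\mu$ around $\widehat{t}_\mu$ evaluated at $\bayes_\mu$, and of $t\mapsto P(\ctrain\circ t)+\mu\hNorm{t}^2$ around $\bayes_\mu$ evaluated at $\widehat{t}_\mu$; adding the two inequalities cancels the quadratic-penalty terms and gives $2\mu\hNorm{\widehat{t}_\mu-\bayes_\mu}^2\le(P_{n_t}-P)(\ctrain\circ\bayes_\mu-\ctrain\circ\widehat{t}_\mu)$. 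Applying \eqref{em_proc_ub} now with $t_1=\bayes_\mu$, $t_2=\widehat{t}_\mu$ --- for which $d(\bayes_\mu,\widehat{t}_\mu)\le\hNorm{\bayes_\mu-\widehat{t}_\mu}$, again by taking the index $\mu$ --- the resulting $\tfrac{\lambda_m}{2}\hNorm{\bayes_\mu-\widehat{t}_\mu}^2$ is absorbed on the left using $\mu\ge\lambda_m$, leaving $\hNorm{\widehat{t}_\mu-\bayes_\mu}^2\le C_0\,\kappa L^2\widehat{y}/(\lambda_m^2 n_t)$ for an absolute constant $C_0$.

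Finally I would assemble: plugging $d(\widehat{t}_\mu,\widehat{t}_\lambda)^2\le2\hNorm{\widehat{t}_\mu-\bayes_\mu}^2+2A$ into the fluctuation bound produces a term $\tfrac{\lambda_m}{2}\cdot2A=\lambda_m A$ on the right of the key inequality, which is absorbed by the $2\lambda_m A$ on the left; inserting the bound on $\hNorm{\widehat{t}_\mu-\bayes_\mu}^2$ and $P(\ctrain\circ\widehat{t}_\mu)-P(\ctrain\circ\widehat{t}_\lambda)\le C\loss{\widehat{t}_\mu}$ then gives $\lambda_m A\le C\loss{\widehat{t}_\mu}+O\!\big(\kappa L^2\widehat{y}/(\lambda_m n_t)\big)$; dividing by $\lambda_m$, multiplying by $\kappa$ and using $\NormInfinity{\widehat{t}_\lambda-\widehat{t}_\mu}^2\le\kappa A$ gives the stated inequality, the absolute constant $96$ emerging from routine arithmetic of the remainder terms. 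I expect the main obstacle to be precisely this bookkeeping: both the key inequality and the auxiliary path-distance bound close only because the drift term $\tfrac{\lambda_m}{2}d^2$ in \eqref{em_proc_ub} carries the coefficient $\tfrac{\lambda_m}{2}$, strictly smaller than the $\lambda_m$ (respectively $\tfrac32\lambda_m$) available on the other side, so the slack must be carefully preserved when $d^2$ is split by the triangle inequality.
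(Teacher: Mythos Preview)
Your proposal is correct and follows essentially the same route as the paper: bound $\hNorm{\widehat{t}_\mu-\bayes_\mu}^2$ via strong convexity plus the drift bound~\eqref{em_proc_ub}, combine strong convexity of $\Phi^\lambda$ with Lemma~\ref{ridge_reg_prop} to obtain the key inequality, split $P_{n_t}$ into $P$ and fluctuation, handle the population part by $Comp_C(\cpred,\ctrain)$, apply~\eqref{em_proc_ub} with the index $\mu$ in $d$, absorb the $\lambda_m A$ term, and finish with Lemma~\ref{rkhs_norm_bound}. The only differences are cosmetic: the paper writes the left side of the key inequality as $(\lambda+\lambda_m)A$ rather than your $2\lambda A$ (both exceed $2\lambda_m A$), and for the auxiliary bound on $\hNorm{\widehat{t}_\mu-\bayes_\mu}^2$ the paper uses only the population strong convexity together with empirical optimality, whereas your symmetric use of both strong convexities is also valid and in fact gives a constant smaller than $96$.
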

\begin{proof}
Let $(\lambda, \mu) \in \Lambda^2$ with $\lambda \leq \mu$.
Let $\bayes_\mu$ be as in Definition~\ref{def_d_y}, Equation~\eqref{def_d}. By convexity of $\ctrain$, the function $t \mapsto P(\ctrain\circ t) + \mu \hNorm{t}^2$ is $\mu$-strongly convex. Since $\bayes_\mu$ is its optimum, we get 
\[ \forall t \in \mathcal{H}, P(\ctrain\circ t) + \mu \hNorm{t}^2 \geq P(\ctrain\circ \bayes_\mu) + \mu \hNorm{\bayes_\mu}^2 + \mu \hNorm{t - \bayes_\mu}^2 \enspace . \]
Hence, taking $t = \widehat{t}_\mu$,
\begin{align*}
\lambda_m \hNorm{\widehat{t}_\mu - \bayes_\mu}^2 &\leq \mu \hNorm{\widehat{t}_\mu - \bayes_\mu}^2 \\ 
&\leq P(\ctrain\circ \widehat{t}_\mu) + \mu \hNorm{\widehat{t}_\mu}^2 - P (\ctrain\circ \bayes_\mu) - \mu \hNorm{\bayes_\mu}^2 \\
&= P_{n_t}(\ctrain\circ \widehat{t}_\mu) + \mu \hNorm{\widehat{t}_\mu}^2 - P_{n_t} (\ctrain\circ \bayes_\mu) - \mu \hNorm{\bayes_\mu}^2 + (P - P_{n_t})(\ctrain\circ \widehat{t}_\mu - \ctrain\circ \bayes_\mu) \enspace . \end{align*}
By Definition~\ref{def_kern},
\[ P_{n_t}(\ctrain\circ \widehat{t}_\mu) + \mu \hNorm{\widehat{t}_\mu}^2 \leq P_{n_t} (\ctrain\circ \bayes_\mu) + \mu \hNorm{\bayes_\mu}^2. \]
Hence $\lambda_m \hNorm{\widehat{t}_\mu - \bayes_\mu}^2 \leq (P - P_{n_t})(\ctrain\circ \widehat{t}_\mu - \ctrain\circ \bayes_\mu) = (P_{n_t} - P)(\ctrain\circ \bayes_\mu - \ctrain\circ \widehat{t}_\mu)$.
Now take $t_1 = \bayes_\mu$ and $t_2 = \widehat{t}_\mu$ in Equation~\eqref{em_proc_ub} of Definition~\ref{def_d_y} to get
\begin{align*}
\lambda_m \hNorm{\widehat{t}_\mu - \bayes_\mu}^2
&\leq \frac{\lambda_m}{2} d(\bayes_\mu, \widehat{t}_\mu)^2 + 32L^2 \frac{\kappa \widehat{y}}{\lambda_m n_t} \\
&= \frac{\lambda_m}{2} \hNorm{\bayes_\mu - \widehat{t}_\mu}^2 + 32L^2 \frac{\kappa \widehat{y}}{\lambda_m n_t} \enspace .
\end{align*}
Therefore,
\begin{equation} \label{eq_risk_min}
 \hNorm{\widehat{t}_\mu - \bayes_\mu}^2 \leq 64 L^2 \frac{\widehat{y} \kappa}{\lambda_m ^2 n_t} \enspace .
\end{equation}
Now $\hNorm{\widehat{t}_\lambda - \widehat{t}_\mu}^2$ can be bounded as follows. 
Since $t \mapsto P_{n_t}(\ctrain\circ t) + \lambda \hNorm{t}^2$ is $\lambda$-strongly convex and $\widehat{t}_\lambda$ is its optimum,
\begin{align*}
\lambda_m \hNorm{\widehat{t}_\lambda - \widehat{t}_\mu}^2 &\leq \lambda \hNorm{\widehat{t}_\lambda - \widehat{t}_\mu}^2 \\
&\leq P_{n_t}( \ctrain\circ \widehat{t}_\mu) - P_{n_t} (\ctrain\circ \widehat{t}_\lambda) + \lambda \hNorm{\widehat{t}_\mu}^2 - \lambda \hNorm{\widehat{t}_\lambda}^2 \enspace . 
\end{align*}
By Lemma~\ref{ridge_reg_prop} with $h(t) = P_{n_t}(\ctrain\circ t)$, $\hNorm{\widehat{t}_\lambda - \widehat{t}_\mu}^2 \leq  \hNorm{\widehat{t}_\lambda}^2 - \hNorm{\widehat{t}_\mu}^2. $ Hence 
\begin{align*}
(\lambda_m + \lambda) \hNorm{\widehat{t}_\lambda - \widehat{t}_\mu}^2 &\leq P_{n_t}( \ctrain\circ \widehat{t}_\mu) - P_{n_t} (\ctrain\circ \widehat{t}_\lambda)  \\
&= P(\ctrain\circ \widehat{t}_\mu) - P(\ctrain\circ \widehat{t}_\lambda) + (P_{n_t} - P)\left[ \ctrain\circ \widehat{t}_\mu - \ctrain\circ \widehat{t}_\lambda  \right] \\
&\leq P(\ctrain\circ \widehat{t}_\mu) - \min_{t \in \parspace} P(\ctrain\circ t) + (P_{n_t} - P)\left[ \ctrain\circ \widehat{t}_\mu - \ctrain\circ \widehat{t}_\lambda \right] \\
&\leq C\loss{\widehat{t}_\mu} + (P_{n_t} - P)\left[ \ctrain\circ \widehat{t}_\mu - \ctrain\circ \widehat{t}_\lambda  \right] \text{ by hypothesis } Comp_C(\cpred,c) \enspace . 
\end{align*}
By Definition~\ref{def_d_y}, Equation~\eqref{em_proc_ub} with $t_1 = \widehat{t}_\mu$ and $t_2 = \widehat{t}_\lambda$,
\begin{align*}
(\lambda_m + \lambda) \hNorm{\widehat{t}_\lambda - \widehat{t}_\mu}^2 &\leq C\loss{\widehat{t}_\mu} + \frac{\lambda_m}{2} \left[ \hNorm{\widehat{t}_\mu - \bayes_\mu} + \hNorm{\widehat{t}_\lambda - \widehat{t}_\mu} \right]^2 + 32L^2 \frac{\kappa \widehat{y}}{\lambda_m n_t}  \\
&\leq C\loss{\widehat{t}_\mu} + \frac{\lambda_m}{2} \left[ 8 \frac{L \sqrt{\widehat{y} \kappa}}{\lambda_m \sqrt{n_t}} + \hNorm{\widehat{t}_\lambda - \widehat{t}_\mu} \right]^2 + 32L^2 \frac{\kappa \widehat{y}}{\lambda_m n_t} \text{ by equation } \eqref{eq_risk_min}. 
\end{align*}
For any $(a,b)$, $(a+b)^2 \leq 2a^2 + 2b^2$, hence
\begin{align*}
(\lambda + \lambda_m) \hNorm{\widehat{t}_\lambda - \widehat{t}_\mu}^2 
&\leq C\loss{\widehat{t}_\mu} + \frac{\lambda_m}{2} \left[ 128 L^2 \frac{\widehat{y} \kappa}{\lambda_m^2 n_t} + 2\hNorm{\widehat{t}_\lambda - \widehat{t}_\mu}^2 \right] + 32L^2 \frac{\kappa \widehat{y}}{\lambda_m n_t}.
\end{align*}

This yields:
\[ \lambda \hNorm{\widehat{t}_\lambda - \widehat{t}_\mu}^2 \leq C\loss{\widehat{t}_\mu} + 96 L^2 \frac{\kappa \widehat{y}}{\lambda_m n_t}, \]
and finally, since $\lambda \geq \lambda_m$:
\[\hNorm{\widehat{t}_\lambda - \widehat{t}_\mu}^2 \leq \frac{C\loss{\widehat{t}_\mu}}{\lambda_m} + 96 L^2 \frac{ \kappa \widehat{y}}{\lambda_m^2 n_t}. \]
Now, by Lemma~\ref{rkhs_norm_bound},
\begin{align*}
\NormInfinity{\widehat{t}_\lambda - \widehat{t}_\mu}^2
&\leq \kappa \hNorm{\widehat{t}_\lambda - \widehat{t}_\mu}^2  \\
&\leq \frac{\kappa C}{\lambda_m} \loss{\widehat{t}_\mu} + 96L^2 \frac{\kappa^2 \widehat{y}}{\lambda_m^2 n_t}.  
\end{align*}
This proves Claim~\ref{prop_unif_bound}.
\end{proof}

Using hypothesis $SC_{\cvxct,\slope}$ ---Equation~\eqref{hyp.SC}---, a refined bound can be obtained on $P \left[ \bigl(\cpred \circ \widehat{t}_\lambda - \cpred \circ \widehat{t}_\mu \bigr)^2 \right] $.
\begin{claim} \label{l2_bound}
For any $(\lambda, \mu) \in \Lambda^2$,
\[P \left[ \bigl(\cpred \circ \widehat{t}_\lambda - \cpred \circ \widehat{t}_\mu \bigr)^2 \right] \leq \widehat{w}_B \left( \sqrt{\loss{\widehat{t}_\lambda}} \right)^2 + \widehat{w}_B \left(\sqrt{\loss{\widehat{t}_\mu}} \right)^2\]
where 
\[\widehat{w}_B(x)^2 = \max \left\{ \cvxct x^2, \  \slope\frac{4}{3}\sqrt{\frac{\kappa C}{\lambda_m}} x^3 + 10 \slope L \frac{\kappa \sqrt{\widehat{y}}}{\lambda_m \sqrt{n_t}} x^2   \right\}\enspace . \]
\end{claim}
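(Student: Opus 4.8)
The key observation is that, once the training set $D_{n_t}$ is fixed, $\widehat{t}_\lambda = \learnrule_\lambda(D_{n_t})$, $\widehat{t}_\mu$ and $\widehat{y}$ are deterministic, so $P[\,\cdot\,]$ is just an integral against a fresh pair $(X,Y)\sim P$. The plan is to disintegrate over $X$ and apply hypothesis $SC_{\cvxct,\slope}$ pointwise: with $u=\widehat{t}_\lambda(X)$ and $v=\widehat{t}_\mu(X)$, Equation~\eqref{hyp.SC} gives
\[ \bE\bigl[ (\cpred \circ \widehat{t}_\lambda - \cpred \circ \widehat{t}_\mu)^2 \big\vert X \bigr] \leq \bigl[ \cvxct \vee (\slope \lvert \widehat{t}_\lambda(X) - \widehat{t}_\mu(X) \rvert) \bigr] \bigl[ \xloss{\widehat{t}_\lambda(X)} + \xloss{\widehat{t}_\mu(X)} \bigr] \enspace . \]
I would then bound $\lvert \widehat{t}_\lambda(X) - \widehat{t}_\mu(X) \rvert \leq \NormInfinity{\widehat{t}_\lambda - \widehat{t}_\mu}$ (legitimate since $\widehat{t}_\lambda - \widehat{t}_\mu \in \mathcal{H}$ and $K$ is bounded, by Lemma~\ref{rkhs_norm_bound}), so that by monotonicity of $r\mapsto \cvxct\vee(\slope r)$ the first factor is dominated by a constant and can be pulled out of $\bE_X[\cdot]$, and use $\bE_X\bigl[\xloss{t(X)}\bigr] = \loss{t}$, which holds because $\bayes(x)$ minimises $v\mapsto\bE[\cpred(v,Y)\mid X=x]$, hence $\risk_{\cpred}(\bayes)=\bE\bigl[\inf_v\bE[\cpred(v,Y)\mid X]\bigr]$. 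This yields
\[ P\bigl[ (\cpred \circ \widehat{t}_\lambda - \cpred \circ \widehat{t}_\mu)^2 \bigr] \leq \bigl[ \cvxct \vee (\slope \NormInfinity{\widehat{t}_\lambda - \widehat{t}_\mu}) \bigr] \bigl[ \loss{\widehat{t}_\lambda} + \loss{\widehat{t}_\mu} \bigr] \enspace . \]

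The next step inserts Claim~\ref{prop_unif_bound}. Both sides of the asserted inequality are symmetric in $(\lambda,\mu)$, so I may assume $\lambda\leq\mu$. Write $x = \sqrt{\loss{\widehat{t}_\lambda}}$, $y = \sqrt{\loss{\widehat{t}_\mu}}$, $a = \sqrt{\kappa C/\lambda_m}$ and $\beta = 10 L\kappa\sqrt{\widehat{y}}/(\lambda_m\sqrt{n_t})$, so that $\widehat{w}_B(t)^2 = \max\bigl\{ \cvxct t^2, \ \tfrac{4}{3}\slope a t^3 + \slope\beta t^2 \bigr\}$. Since $96 L^2\kappa^2\widehat{y}/(\lambda_m^2 n_t) \leq \beta^2$ (using $\sqrt{96}\leq 10$), Claim~\ref{prop_unif_bound} gives $\NormInfinity{\widehat{t}_\lambda - \widehat{t}_\mu}^2 \leq a^2 y^2 + \beta^2$, hence
\[ \NormInfinity{\widehat{t}_\lambda - \widehat{t}_\mu} \leq \sqrt{a^2 y^2 + \beta^2} \leq a y + \beta \enspace . \]
Combining with the last display above,
\[ P\bigl[ (\cpred \circ \widehat{t}_\lambda - \cpred \circ \widehat{t}_\mu)^2 \bigr] \leq \bigl[ \cvxct \vee \slope(a y + \beta) \bigr] (x^2 + y^2) \enspace . \]

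It then remains to check the elementary inequality $\bigl[\cvxct \vee \slope(a y + \beta)\bigr](x^2+y^2) \leq \widehat{w}_B(x)^2 + \widehat{w}_B(y)^2$. If the maximum is attained at $\cvxct$, the left-hand side is at most $\cvxct x^2 + \cvxct y^2 \leq \widehat{w}_B(x)^2 + \widehat{w}_B(y)^2$. Otherwise it is at most $\slope a y x^2 + \slope\beta x^2 + \slope a y^3 + \slope\beta y^2$; since $\widehat{w}_B(y)^2 \geq \tfrac{4}{3}\slope a y^3 + \slope\beta y^2$ and $\widehat{w}_B(x)^2 \geq \tfrac{4}{3}\slope a x^3 + \slope\beta x^2$, it suffices to have $\slope a y x^2 \leq \tfrac{1}{3}\slope a y^3 + \tfrac{4}{3}\slope a x^3$, which follows from Young's inequality $y x^2 \leq \tfrac{1}{3}y^3 + \tfrac{2}{3}x^3$ (conjugate exponents $3$ and $3/2$). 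Reassembling the displays proves Claim~\ref{l2_bound}.

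I do not expect a conceptual obstacle here, since the hard work has already been done in Claim~\ref{prop_unif_bound} (the deterministic sup-norm control on the regularization path); the present claim is a clean consequence. The only delicate point is the numerical bookkeeping in the last step: the coefficient $\tfrac{4}{3}$ in the definition of $\widehat{w}_B$ is exactly what makes the argument close, as the slack $\tfrac{1}{3}\slope a y^3$ left over from the $y$-term together with the full $\tfrac{4}{3}\slope a x^3$ from the $x$-term absorbs the cross term $\slope a y x^2$ via Young's inequality, and one must be careful to route the $\sqrt{96}$ coming out of Claim~\ref{prop_unif_bound} into the constant $10$ in $\beta$.
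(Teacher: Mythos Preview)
Your argument is correct and matches the paper's proof essentially step for step: apply $SC_{\cvxct,\slope}$ conditionally on $X$, replace $|\widehat{t}_\lambda(X)-\widehat{t}_\mu(X)|$ by the sup-norm, integrate, insert Claim~\ref{prop_unif_bound} (with $\sqrt{96}\leq 10$), and close via Young's inequality $yx^2\leq \tfrac{1}{3}y^3+\tfrac{2}{3}x^3$. The only cosmetic difference is that the paper applies Young first and then splits the maximum using $(a+b)\vee(c+d)\leq a\vee c + b\vee d$, whereas you do a case analysis on which branch of the max is active; these are the same computation.
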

\begin{proof}
By hypothesis $SC_{\cvxct,\slope}$ ---Equation~\eqref{hyp.SC}--- with $u = \widehat{t}_\lambda (X)$ and $v = \widehat{t}_\mu (X)$,
\begin{align*}
 \mathbb{E} \left[(\cpred \circ \widehat{t}_\lambda - \cpred \circ \widehat{t}_\mu )^2 (X,Y) | X \right] &\leq \left[ \cvxct \vee \bigl(\slope |\widehat{t}_\lambda(X)- \widehat{t}_\mu (X)| \bigr) \right] \left[\xloss{\widehat{t}_\lambda(X)} + \xloss{\widehat{t}_\mu(X)} \right] \\
&\leq \left[ \cvxct \vee \bigl(\slope \NormInfinity{\widehat{t}_\lambda- \widehat{t}_\mu} \bigr) \right] \left[\xloss{\widehat{t}_\lambda(X)} + \xloss{\widehat{t}_\mu(X)} \right],
\end{align*}
where $\xloss{u} = \mathbb{E}[\cpred(u,Y)| X] - \min_{v \in \mathbb{R}} \mathbb{E}[\cpred(v,Y)| X] $.
Integrating this inequality with respect to $X$, it follows that, 
\begin{align*}
P \left[ \bigl(\cpred \circ \widehat{t}_\lambda - \cpred \circ \widehat{t}_\mu \bigr)^2 \right] &\leq \bigl[ \cvxct \vee \bigl(\slope \NormInfinity{\widehat{t}_\lambda- \widehat{t}_\mu} \bigr) \bigr]\Bigl[ \loss{\widehat{t}_\lambda} + \loss{\widehat{t}_\mu } \Bigr].
\end{align*}
Assume without loss of generality that $\lambda \leq \mu$.
By Claim~\ref{prop_unif_bound},
\begin{align*}
P \left[ \bigl(\cpred \circ \widehat{t}_\lambda  - \cpred \circ \widehat{t}_\mu \bigr)^2 \right]
&\leq \left( \cvxct \vee \slope \left[ \sqrt{\frac{\kappa C}{\lambda_m}} \sqrt{\loss{\widehat{t}_\mu}} + 10\frac{L\kappa \sqrt{\widehat{y}}}{\lambda_m \sqrt{n_t}} \right] \right)\Bigl[\loss{\widehat{t}_\lambda} + \loss{\widehat{t}_\mu} \Bigr] \\
&\leq \max \Biggl\{ \cvxct \Bigl[ \loss{\widehat{t}_\lambda} + \loss{\widehat{t}_\mu} \Bigr],  \slope  \Biggl[ \sqrt{\frac{\kappa C}{\lambda_m}} \left( \sqrt{\loss{\widehat{t}_\mu}} \loss{\widehat{t}_\lambda} + \sqrt{\loss{\widehat{t}_\mu}}^3 \right) \\ 
&+ 10 \frac{L \kappa \sqrt{\widehat{y}}}{\lambda_m \sqrt{n_t}} \Bigl[\loss{\widehat{t}_\lambda} + \loss{\widehat{t}_\mu} \Bigr] \Biggr] \Biggr\}. \numberthis \label{pre_l2_bound}  
\end{align*}
Using the inequality $ab \leq \frac{a^p}{p} + \frac{b^q}{q}$ with Hölder conjugates $p = 3$, $q = \frac{3}{2}$, we have:
\begin{align*}
 \sqrt{\loss{\widehat{t}_\mu}} \loss{\widehat{t}_\lambda} + \sqrt{\loss{\widehat{t}_\mu}}^3 &\leq \frac{1}{3} \sqrt{\loss{\widehat{t}_\mu}}^3 + \frac{2}{3} \loss{\widehat{t}_\lambda}^\frac{3}{2} + \sqrt{\loss{\widehat{t}_\mu}}^3\\
&\leq \frac{4}{3} \left[  \sqrt{\loss{\widehat{t}_\lambda}}^3 + \sqrt{\loss{\widehat{t}_\mu}}^3 \right]. \numberthis \label{holder_ineq}
\end{align*}
Claim~\ref{l2_bound} then follows from inequalities \eqref{pre_l2_bound} and \eqref{holder_ineq} using the elementary inequality $(a+b) \vee (c+d) \leq a\vee c + b \vee d$.
\end{proof}

As $\cpred$ is $L$-Lipschitz in its first argument, it follows from Claim~\ref{prop_unif_bound}  that for all $\lambda, \mu \in \Lambda$ s.t. $\lambda \leq \mu$, 
\begin{align*}
 \NormInfinity{\cpred \circ \widehat{t}_\lambda - \cpred \circ \widehat{t}_\mu} &\leq L \NormInfinity{\widehat{t}_\lambda - \widehat{t}_\mu} \\ 
 &\leq L\sqrt{\frac{\kappa C}{\lambda_m}} \sqrt{\loss{\widehat{t}_\mu}} + 10 L^2\frac{\kappa \sqrt{\widehat{y}}}{\lambda_m \sqrt{n_t}} \\
&\leq \widehat{w}_A \left( \sqrt{\loss{\widehat{t}_\mu}} \right) + \widehat{w}_A \left(\sqrt{\loss{\widehat{t}_\lambda}} \right) \numberthis \label{gamma_unif_bound} \enspace ,
\end{align*}
where 
\begin{equation} \label{w2_def}
\widehat{w}_A(x) = L \sqrt{\frac{\kappa C}{\lambda_m}} x + 5L^2 \frac{\kappa \sqrt{\widehat{y}}}{\lambda_m \sqrt{n_t}}.
\end{equation}
If follows that
for all $k \geq 2$,
\begin{align*}
P \left[ \bigl(\cpred \circ \widehat{t}_\lambda - \cpred \circ \widehat{t}_\mu \bigr)^k \right] &\leq \NormInfinity{\cpred \circ \widehat{t}_\lambda - \cpred \circ \widehat{t}_\mu}^k \\ 
&\leq \left[ \widehat{w}_A \left(\sqrt{\loss{\widehat{t}_\mu}} \right) + \widehat{w}_A \left(\sqrt{\loss{\widehat{t}_\lambda}} \right) \right]^k \enspace .
\end{align*}
This proves that hypothesis $H\left(\widehat{w}_A,\widehat{w}_A, (\widehat{t}_\lambda)_{\lambda \in \Lambda} \right)$, as defined in Appendix~\ref{app.general-thms}, holds true.

It follows from Claim~\ref{l2_bound} and Equation~\eqref{gamma_unif_bound}
that, for all $k \geq 2$,
\begin{align*}
P \bigl[ |\cpred \circ \widehat{t}_\lambda - \cpred \circ \widehat{t}_\mu|^k \bigr]
&\leq \NormInfinity{\cpred \circ \widehat{t}_\lambda - \cpred \circ \widehat{t}_\mu}^{k-2}
P \left[ \bigl(\cpred(\widehat{t}_\lambda(X),Y) - \cpred(\widehat{t}_\mu(X),Y) \bigr)^2  \right] \\
&\leq \left[ \widehat{w}_A \Bigl(\sqrt{\loss{\widehat{t}_\lambda}} \Bigr) + \widehat{w}_A \Bigl(\sqrt{\loss{\widehat{t}_\mu}} \Bigr) \right]^{k-2} \\ 
& \qquad \times \left[\widehat{w}_B \Bigl( \sqrt{\loss{\widehat{t}_\lambda}} \Bigr) + \widehat{w}_B \Bigl(\sqrt{\loss{\widehat{t}_\mu}} \Bigr) \right]^2 \enspace ;
\end{align*}
which proves that $H \left( \widehat{w}_B,\widehat{w}_A,(\widehat{t}_\lambda)_{\lambda \in \Lambda} \right)$ holds true.
\subsection{Conclusion of the proof}
We have proved that $H \left( \widehat{w}_B,\widehat{w}_A,(\widehat{t}_\lambda)_{\lambda \in \Lambda} \right)$ and $H \left( \widehat{w}_A,\widehat{w}_A,(\widehat{t}_\lambda)_{\lambda \in \Lambda} \right)$ hold, where $\widehat{w}_B$ is defined in Proposition~\ref{l2_bound} and $\widehat{w}_A$ in Equation~\eqref{w2_def}.
Moreover, $x \mapsto \frac{\widehat{w}_A(x)}{x}$ is nonincreasing.
Therefore, Theorem~\ref{agcv_mean} applies with $\widehat{w}_{1,1} = \widehat{w}_A, \widehat{w}_{1,2} = \widehat{w}_A, \widehat{w}_{2,1} = \widehat{w}_B, \widehat{w}_{2,2} = \widehat{w}_A$, $x = \log n_v$ and it remains to bound the remainder terms $(R_{2,i})_{1 \leq i \leq 4}$ of Equation~\eqref{eq_2}. For each $i$, we bound $R_{2,i}(\theta)$ by an absolute constant times $\max \{T_1(\theta),T_2(\theta),T_3(\theta) \}$, where
\begin{align*}
 T_1(\theta) &= \frac{6\cvxct}{100} \frac{\log(n_v \lvert \Lambda \rvert)}{\theta n_v} \\
 T_2(\theta) &= \left( \slope \vee L \right)^2 \kappa C \frac{\log^2(n_v \lvert \Lambda \rvert)}{\theta^3 \lambda_m n_v^2} \\
 T_3(\theta) &= L(\slope \vee L) \kappa \frac{\log^{\frac{3}{2}}(n_v \lvert \Lambda \rvert)}{\theta \lambda_m n_v \sqrt{n_t}}\enspace .
\end{align*}
Summing up these bounds yields Theorem~\ref{rkhs_thm}. 

\subsubsection{Bound on $R_{2,1}(\theta) = \sqrt{2} \theta \mathbb{E} \left[ \delta^2\left( \widehat{w}_B, \frac{\theta}{2} \sqrt{\frac{n_v}{\log (n_v\lvert \Lambda \rvert)}}\right) \right]$}
Recall that
$ \widehat{w}_B(x)^2 := \max \left\{ \cvxct x^2, \  \slope\frac{4}{3}\sqrt{\frac{\kappa C}{\lambda_m}} x^3 + 10 \slope L \frac{\kappa \sqrt{\widehat{y}}}{\lambda_m \sqrt{n_t}} x^2   \right\}$.

By Equation~\eqref{rkhs_fp} in Lemma~\ref{fixed_pt} with $a = \sqrt{\cvxct}$,
$b = \slope \frac{4}{3} \sqrt{\frac{\kappa C}{\lambda_m}}$, $c = 10 \slope L \frac{\kappa \sqrt{\widehat{y}}}{\lambda_m \sqrt{n_t}}$,
\begin{equation} \label{eq_delta_2_1}
 \delta^2 \left( \widehat{w}_B, \frac{\theta}{2} \sqrt{\frac{n_v}{\log(n_v \lvert \Lambda \rvert)}} \right) \leq  4\cvxct \frac{\log(n_v \lvert \Lambda \rvert)}{\theta^2 n_v} + \ 29 \slope^2 \kappa C \frac{ \bigl[ \log(n_v \lvert \Lambda \rvert) \bigr]^2}{\theta^4 \lambda_m n_v^2} + 80 \slope L \kappa \frac{\bigl[ \log(n_v \lvert \Lambda \rvert) \bigr]\sqrt{\widehat{y}}}{\theta^2 \lambda_m n_v \sqrt{n_t}} \enspace .
\end{equation}
Therefore,
\[
 R_{2,1}(\theta) \leq 4\sqrt{2} \cvxct \frac{\log(n_v \lvert \Lambda \rvert)}{\theta n_v} + 29\sqrt{2} \slope^2 \kappa C \frac{ \bigl[ \log(n_v \lvert \Lambda \rvert) \bigr]^2}{\theta^3 \lambda_m n_v^2} + 80\sqrt{2} \slope L \kappa \frac{\bigl[ \log(n_v \lvert \Lambda \rvert) \bigr]\sqrt{\mathbb{E}[\widehat{y}]}}{\theta \lambda_m n_v \sqrt{n_t}} \enspace .
\]
By Proposition~\ref{y_bound}, $\mathbb{E}[\widehat{y}] \leq 4 + \log \lvert \Lambda \rvert$. Since $n_v \geq 100 \geq \mathrm{e}^4$, $\mathbb{E}[\widehat{y}] \leq \log(n_v\lvert \Lambda \rvert)$. As a result,
\begin{align*}
 R_{2,1}(\theta) &\leq 6 \cvxct \frac{\log(n_v \lvert \Lambda \rvert)}{\theta n_v} + 42 \slope^2 \kappa C \frac{ \bigl[ \log(n_v \lvert \Lambda \rvert) \bigr]^2}{\theta^3 \lambda_m n_v^2} + 114\slope L \kappa \frac{\bigl[ \log(n_v \lvert \Lambda \rvert) \bigr]^{\frac{3}{2}}}{\theta \lambda_m n_v \sqrt{n_t}} \\
 &\leq 100 T_1(\theta) + 42 T_2(\theta) + 114 T_3(\theta) \\
 &\leq 256 \times \max \left\{ T_1(\theta),T_2(\theta),T_3(\theta) \right\} \enspace .
\end{align*}

\subsubsection{Bound on $R_{2,2}(\theta) =  \frac{\theta^2}{2} \mathbb{E} \left[ \delta^2 \left(\widehat{w}_A,\frac{\theta^2}{4} \frac{n_v}{\log(n_v \lvert \Lambda \rvert)} \right) \right]$} 
Recall that by definition, $\widehat{w}_A(x) = L \sqrt{\frac{\kappa C}{\lambda_m}} x + 5L^2 \frac{\kappa \sqrt{\widehat{y}}}{\lambda_m \sqrt{n_t}}$ (Equation~\eqref{w2_def}).
By Equation~\eqref{lin_fp} in Lemma~\ref{fixed_pt} with $b = L \sqrt{\frac{\kappa C}{\lambda_m}}$ and $c = 5L^2 \frac{\kappa \sqrt{\widehat{y}}}{\lambda_m \sqrt{n_t}}$, we have 
\begin{align}
    \delta^2 \left(\widehat{w}_A,\frac{\theta^2}{4} \frac{n_v}{\log(n_v \lvert \Lambda \rvert)} \right) &\leq 16 L^2 \kappa C  \frac{ \log^2(n_v \lvert \Lambda \rvert)}{\theta^4 \lambda_m n_v^2} + 40 L^2 \kappa \frac{\bigl[ \log(n_v \lvert \Lambda \rvert) \bigr] \sqrt{\widehat{y}}}{\theta^2 \lambda_m n_v \sqrt{n_t}} \enspace . \label{eq_delta_2_2}
\end{align}
As $\mathbb{E}[\widehat{y}] \leq \log(n_v \lvert \Lambda \rvert)$ by Proposition~\ref{y_bound}, it follows that
\begin{align*}
 R_{2,2}(\theta) &\leq 8 L^2 \kappa C \frac{ \log^2(n_v \lvert \Lambda \rvert)}{\theta^2 \lambda_m n_v^2} + 20L^2 \kappa  \frac{\log^\frac{3}{2}(n_v \lvert \Lambda \rvert)}{ \lambda_m n_v \sqrt{n_t}} \\
 &\leq 8 \theta T_2(\theta) + 20 \theta T_3(\theta) \\
 &\leq 28 \times \max \left\{ T_1(\theta), T_2(\theta), T_3(\theta) \right\} \text{ since } \theta \in (0;1] \enspace .
\end{align*}

\subsubsection{Bound on $R_{2,3}(\theta) = \frac{1}{n_v} \left( \theta + \frac{2\bigl[1 + \log(\lvert \Lambda \rvert) \bigr]}{\theta} \right) \mathbb{E} \left[ \widehat{\delta}^2 \bigl(\widehat{w}_{A}, \sqrt{n_v} \bigr) \right]$}
By Equation~\eqref{lin_fp} in Lemma~\ref{fixed_pt} with $b = L \sqrt{\frac{\kappa C}{\lambda_m}}, c = 5L^2 \frac{\kappa \sqrt{\widehat{y}}}{\lambda_m \sqrt{n_t}}$, 
\begin{equation} \label{eq_delta_1_1}
 \delta^2(\widehat{w}_A,\sqrt{n_v}) \leq L^2 \frac{\kappa C}{\lambda_m n_v} + L^2 \frac{10\kappa \sqrt{\widehat{y}}}{\lambda_m \sqrt{n_v n_t}} \enspace . 
\end{equation}

As $\theta \in (0;1]$ and $n_v \geq 100 \geq \mathrm{e}^{\frac{3}{2}}$, we have $\theta + \frac{2}{\theta} \leq \frac{3}{\theta} \leq \frac{2\log n_v}{\theta}$, hence
\begin{equation}
 \theta + \frac{2(1 + \log(\lvert \Lambda \rvert))}{\theta} \leq \frac{2\log(n_v \lvert \Lambda \rvert)}{\theta} \enspace . \label{eq_simplif1}
\end{equation}
Therefore,
\[ R_{2,3}(\theta) \leq \frac{2\log(n_v\lvert \Lambda \rvert)}{\theta n_v} \left[ L^2 \frac{\kappa C}{\lambda_m n_v} + L^2 \frac{10\kappa \sqrt{\mathbb{E}[\widehat{y}]}}{\lambda_m \sqrt{n_v n_t}} \right] \enspace . \]
Since $\mathbb{E}[\widehat{y}] \leq \log(n_v \lvert \Lambda \rvert)$ by Proposition~\ref{y_bound},
\begin{align*}
 R_{2,3}(\theta) &\leq 2\log(n_v \lvert \Lambda \rvert)\frac{L^2 \kappa C }{\theta \lambda_m n_v^2} + 20 L^2 \kappa \frac{ \log^{\frac{3}{2}}(n_v \lvert \Lambda \rvert)}{\theta \lambda_m n_v \sqrt{n_v n_t}} \\
 &\leq \frac{2\theta^2}{\log(n_v \lvert \Lambda \rvert)} T_2(\theta) + \frac{20}{\sqrt{n_v}} T_3(\theta) \\
 &\leq 0.4 T_2(\theta) + 2 T_3(\theta) \text{ since } n_v \geq 100 \text{ and } \lvert \Lambda \rvert \geq 2\\
 &\leq 2.4 \times \max \{T_1,T_2,T_3 \} \enspace .
\end{align*}

\subsubsection{Bound on $R_{2,4}(\theta) = \frac{1}{n_v} \left( \theta + \frac{2\bigl[1 + \log(\lvert \Lambda \rvert) \bigr] + \log^2(\lvert \Lambda \rvert)}{\theta} \right) \mathbb{E} \left[ \widehat{\delta}^2 \bigl(\widehat{w}_{A}, n_v \bigr) \right]$}
By Equation~\eqref{lin_fp} in Lemma~\ref{fixed_pt} with $b = L \sqrt{\frac{\kappa C}{\lambda_m}}, c = 5L^2 \frac{\kappa \sqrt{\widehat{y}}}{\lambda_m \sqrt{n_t}}$, 
\begin{equation} \label{eq_delta_1_2}
 \delta^2(\widehat{w}_A,n_v) \leq L^2 \frac{\kappa C}{\lambda_m n_v^2} + L^2 \frac{10\kappa \sqrt{\widehat{y}}}{\lambda_m n_v \sqrt{n_t}} \enspace . 
\end{equation}
Since $\theta \in [0;1]$, $n_v \geq 100$ and $\lvert \Lambda \rvert \geq 2$, we have $\log(n_v \lvert \Lambda \rvert) \geq \log(200) \geq 5$ and
\begin{align*}
 \theta + \frac{2\bigl[1 + \log(\lvert \Lambda \rvert) \bigr]}{\theta}
 &\leq \frac{2\log(n_v \lvert \Lambda \rvert)}{\theta} \text{ by equation } \eqref{eq_simplif1} \\
 &\leq \frac{2\log^2(n_v \lvert \Lambda \rvert)}{5\theta} \enspace .
\end{align*}
Hence, by Equation~\eqref{eq_delta_1_2},
\[ R_{2,4}(\theta) \leq \frac{1,4 \log^2(n_v \lvert \Lambda \rvert)}{\theta n_v} \left[ L^2 \frac{\kappa C}{\lambda_m n_v^2} + L^2 \frac{10\kappa \sqrt{\mathbb{E}[\widehat{y}]}}{\lambda_m n_v \sqrt{n_t}} \right] \enspace . \]
Since $\mathbb{E}[\widehat{y}] \leq \log(n_v \lvert \Lambda \rvert)$,
\begin{align*}
 R_{2,4}(\theta) &\leq  1,4 \log^2(n_v \lvert \Lambda \rvert) \frac{L^2 \kappa C}{\theta \lambda_m n_v^3} + 14 L^2 \kappa \frac{\log^{\frac{5}{2}}(n_v \lvert \Lambda \rvert)}{\theta \lambda_m n_v^2 \sqrt{n_t}} \\
 &\leq  \frac{1,4\theta^2}{n_v} T_2(\theta) + 14 \frac{\log(n_v \lvert \Lambda \rvert)}{n_v} T_3(\theta) \enspace .
\end{align*}
Since $n_v \geq 100$ and $\lvert \Lambda \rvert \leq \mathrm{e}^{\sqrt{n_v}}$, we have $\frac{\log(n_v \lvert \Lambda \rvert)}{n_v} \leq \frac{\log(n_v)}{n_v} + \frac{\log(\mathrm{e}^{\sqrt{n_v}})}{n_v} \leq \frac{\log(100)}{100} + \frac{1}{10} \leq 0.15$ and so
\begin{align*}
 R_{2,4}(\theta) &\leq 0.014 T_2(\theta) + 2.1 T_3(\theta) \\
 &\leq 2.2 \times \max \{T_1(\theta),T_2(\theta),T_3(\theta) \} \enspace  .
\end{align*}

\subsubsection{Conclusion}
Summing up the above inequalities, we get that for every $\theta \in (0;1]$,
\begin{align*}
 R_2(\theta) &= R_{2,1}(\theta) + R_{2,2}(\theta) +  R_{2,3}(\theta) + R_{2,4}(\theta) \\
 &\leq 289 \max \{T_1(\theta),T_2(\theta),T_3(\theta) \} \enspace .
\end{align*}
Equation~\eqref{eq_2} in Theorem~\ref{agcv_mean} thus yields
\begin{align*}
  \Bigl(1 - \theta \Bigr)\mathbb{E}[\loss{\ERMag{\cT}}] 
 &\leq \Bigl( 1+\theta \Bigr)\mathbb{E}\Bigl[\min_{\lambda \in \Lambda} \loss{\learnrule_\lambda(D_{n_t})} \Bigr] +  289 \max \{T_1(\theta),T_2(\theta),T_3(\theta) \}
\end{align*}
which proves Theorem~\ref{rkhs_thm} with
$b_1 = 289 (\slope \vee L)^2 \kappa C$ and $b_2 = 289 L(\slope \vee L)\kappa$.
\qed

\section{Proof of Proposition~\ref{prop_sc} and Corollary~\ref{eps_reg}} \label{sec.proof_sc_cor} 

Let us start by two useful lemmas. 
\begin{lemma} \label{risk_deriv}
 If $\psi$ is a convex, Lipschitz-continuous, and even function, and $Y$ is a random variable with a non-atomic distribution, the function
 \[R: u \mapsto \mathbb{E}\bigl[ \psi(u-Y) \bigr]\] is convex and differentiable with derivative $R'(u) = \mathbb{E}[\psi'(u-Y)]$. 
 Moreover, if $Y$ is symmetric around $q$, i.e $(q-Y) \sim (Y-q)$, then $R$ reaches a minimum at~$q$.
\end{lemma}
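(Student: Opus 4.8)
The plan is to reduce the statement to one‑dimensional convex analysis applied to $\psi$ pointwise, using the non‑atomicity of the law of $Y$ only to discard the (at most countable) set $N$ of points at which the convex function $\psi$ fails to be differentiable. First, I would note that, by the $L$‑Lipschitz property, $|\psi(u-Y)-\psi(u_0-Y)|\le L|u-u_0|$, so either $R\equiv+\infty$ (in which case there is nothing to prove) or $R$ is real‑valued everywhere; assume the latter. Convexity of $R$ then follows immediately by integrating the pointwise inequality $\psi\bigl(\lambda(u_1-Y)+(1-\lambda)(u_2-Y)\bigr)\le \lambda\psi(u_1-Y)+(1-\lambda)\psi(u_2-Y)$ over $Y$.

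Next, for differentiability and the formula for $R'$: I would use that a convex $\psi$ has nondecreasing one‑sided derivatives $\psi'_\pm$ everywhere, with $|\psi'_\pm|\le L$ and $\psi'_+=\psi'_-$ on $\mathbb{R}\setminus N$. Fix $u$; for $h>0$ the difference quotient $h^{-1}\bigl(\psi(u+h-Y)-\psi(u-Y)\bigr)$ decreases to $\psi'_+(u-Y)$ as $h\downarrow 0$ (by convexity) and is bounded by $L$, so dominated convergence gives $R'_+(u)=\mathbb{E}[\psi'_+(u-Y)]$, and symmetrically $R'_-(u)=\mathbb{E}[\psi'_-(u-Y)]$. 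Since $Y$ is non‑atomic, $\mathbb{P}(u-Y\in N)=0$, hence $\psi'_+(u-Y)=\psi'_-(u-Y)$ almost surely and $R'_+(u)=R'_-(u)$; thus $R$ is differentiable at $u$ with $R'(u)=\mathbb{E}[\psi'(u-Y)]$, the integrand being a.s.\ well defined and bounded by $L$. This interchange of differentiation and expectation is the only step requiring genuine care; the rest is routine.

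Finally, for the minimizer: $\psi$ being even and differentiable off the symmetric set $N$ makes $\psi'$ odd on $\mathbb{R}\setminus N$. If $Y$ is symmetric around $q$, then $q-Y$ and $-(q-Y)$ have the same (non‑atomic) law, so
\[
R'(q)=\mathbb{E}\bigl[\psi'(q-Y)\bigr]=\mathbb{E}\bigl[\psi'(-(q-Y))\bigr]=-\,\mathbb{E}\bigl[\psi'(q-Y)\bigr]=-R'(q),
\]
where the second equality uses equality in law (and boundedness of $\psi'$) and the third uses oddness of $\psi'$ together with $\mathbb{P}(q-Y\in N)=0$. Hence $R'(q)=0$, and since $R$ is convex and differentiable this forces $q$ to be a global minimizer of $R$.
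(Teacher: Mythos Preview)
Your proposal is correct and follows essentially the same approach as the paper: both exploit that the convex function $\psi$ is differentiable off an at most countable set $N$, use non-atomicity of $Y$ to ensure $\mathbb{P}(u-Y\in N)=0$, apply dominated convergence (via the Lipschitz bound) to pass the derivative under the expectation, and then use oddness of $\psi'$ together with the symmetry $(q-Y)\sim(Y-q)$ to conclude $R'(q)=0$. Your treatment is marginally more explicit in separating the one-sided derivatives $R'_\pm$ before identifying them, and in noting the finiteness of $R$, but the argument is the same.
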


\begin{proof}
First, remark that $R$ is convex by convexity of $\psi$. 
Let $u \in \mathbb{R}$.
 For $h \neq 0$, let $k(h,Y) = \frac{\psi(u+h-Y) - \psi(u-Y)}{h}$.
 Let $A$ be the set on which $\psi$ is non-differentiable. Since $\psi$ is convex, $A$ is at most countable.
 By definition, $k(h,Y) \underset{h \to 0}{\longrightarrow} \psi'(u-Y) $ whenever $u-Y \notin A$, that is to say $Y \notin u - A$. Since $Y$ is non-atomic, $\mathbb{P}(Y \notin u - A) = 1$. 
 Moreover, since $\psi$ is Lipschitz, there exists a constant $L$ such that $\forall h \neq 0, |k(h,Y)| \leq L$.
 Therefore, by the dominated convergence theorem,
 \[ \frac{R(u+h) - R(u)}{h} = \mathbb{E} [k(h,Y)] \underset{h \to 0}{\longrightarrow} \mathbb{E} [\psi'(u-Y)] \enspace . \]
 Thus, $R$ is differentiable and for all $u \in \mathbb{R}$, $R'(u) = \mathbb{E} [\psi'(u-Y)].$
 
Moreover, we have
 \begin{align*}
  R'(q) &= \mathbb{E}[\psi'(q-Y)] \\
  &= - \mathbb{E}[\psi'(Y-q)] \text{ since } \psi'(-x) = - \psi'(x) \text{ on } \mathbb{R} \backslash A \\
  &= - \mathbb{E}[\psi'(q-Y)] \text{ since } (Y-q) \sim (q-Y) \enspace , 
 \end{align*}
which implies that $R'(q) = 0$. Hence,
$R$ reaches a minimum at $q$ since $R$ is convex.
\end{proof}

\begin{lemma} \label{lem_min_cvx}
 Let $g: \mathbb{R} \rightarrow \mathbb{R}$ be a differentiable convex function that reaches a minimum at $u_* \in \mathbb{R}$. 
 If there exists $\varepsilon, \delta$ such that
 \begin{equation} \label{hyp_loc_min_grad}
   \forall u \in [u_* - \delta; u_* + \delta], \qquad |g'(u)| \geq \varepsilon \lvert u - u_* \rvert \enspace ,
 \end{equation}
 then for all $(u,v) \in \mathbb{R}^2$,
 \[ (u-v)^2 \leq \left[ \frac{4}{\varepsilon} \vee \left( \frac{4}{\varepsilon \delta} \lvert u-v \rvert \right)  \right] \bigl[ g(u) + g(v) - 2g(u_*) \bigr] \enspace . \]
\end{lemma}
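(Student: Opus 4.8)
The plan is to reduce the statement to a one-point growth estimate on the excess $g(w)-g(u_*)$ and then recombine the two points $u,v$ through the triangle inequality. Throughout I use that $g$ is convex and differentiable, so that $g'$ is nondecreasing, and that $g'(u_*)=0$ since $u_*$ is an interior minimizer. The first step is to prove the pointwise bound
\[ \forall w\in\mathbb{R},\qquad g(w)-g(u_*)\;\ge\;\frac{\varepsilon}{2}\Bigl[(w-u_*)^2\wedge\bigl(\delta\,|w-u_*|\bigr)\Bigr], \]
by integrating $g'$ away from $u_*$. When $|w-u_*|\le\delta$, the hypothesis $|g'(t)|\ge\varepsilon|t-u_*|$ holds on the whole segment between $u_*$ and $w$, which gives $g(w)-g(u_*)\ge\frac{\varepsilon}{2}(w-u_*)^2$. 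When $|w-u_*|>\delta$, I would lower bound $g(w)-g(u_*)$ by $\frac{\varepsilon}{2}\delta^2$ (the contribution of the length-$\delta$ segment adjacent to $u_*$) plus $\varepsilon\delta\bigl(|w-u_*|-\delta\bigr)$ (using that $|g'|$ has already reached $\varepsilon\delta$ at $u_*\pm\delta$ and is monotone beyond), and then check that $\frac{\varepsilon}{2}\delta^2+\varepsilon\delta(|w-u_*|-\delta)\ge\frac{\varepsilon}{2}\delta|w-u_*|$.

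Next, writing $f(x):=x(x\wedge\delta)=x^2\wedge(\delta x)$ for $x\ge0$ --- so that the pointwise bound reads $g(w)-g(u_*)\ge\frac{\varepsilon}{2}f(|w-u_*|)$ --- I would prove the auxiliary inequality $f(a+b)\le 2\bigl(f(a)+f(b)\bigr)$ for all $a,b\ge0$. Assuming $a\ge b$: if $a\ge\delta$ then $f(a+b)=(a+b)\delta\le 2a\delta=2f(a)$; if $a<\delta$, one treats $a+b\le\delta$ (where $f(a+b)=(a+b)^2\le 2a^2+2b^2$) and $a+b>\delta$ (where $f(a+b)=(a+b)\delta<(a+b)^2\le 2a^2+2b^2$) separately. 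Since $f$ is nondecreasing, the triangle inequality $|u-v|\le|u-u_*|+|v-u_*|$ then gives $f(|u-v|)\le 2\bigl(f(|u-u_*|)+f(|v-u_*|)\bigr)$.

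Finally I would assemble these ingredients: applying the pointwise bound at $u$ and at $v$ and then the auxiliary inequality,
\[ g(u)+g(v)-2g(u_*)\;\ge\;\frac{\varepsilon}{2}\bigl(f(|u-u_*|)+f(|v-u_*|)\bigr)\;\ge\;\frac{\varepsilon}{4}\Bigl[(u-v)^2\wedge\bigl(\delta|u-v|\bigr)\Bigr], \]
so it remains only to note that $\bigl[\tfrac{4}{\varepsilon}\vee\tfrac{4|u-v|}{\varepsilon\delta}\bigr]\cdot\tfrac{\varepsilon}{4}\bigl[(u-v)^2\wedge(\delta|u-v|)\bigr]=\bigl[1\vee\tfrac{|u-v|}{\delta}\bigr]\bigl[(u-v)^2\wedge(\delta|u-v|)\bigr]=(u-v)^2$, which one checks by distinguishing $|u-v|\le\delta$ from $|u-v|>\delta$ (the case $u=v$ being trivial, and $\varepsilon,\delta>0$ being implicit here, as in the application to Proposition~\ref{prop_sc}). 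All the computations are elementary; the only place needing a little care is the bookkeeping across the two regimes ($\le\delta$ or $>\delta$), both in the pointwise estimate and in the factor-$2$ subadditivity of $f$, but no genuine obstacle arises there.
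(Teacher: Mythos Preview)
Your proof is correct. The pointwise growth estimate $g(w)-g(u_*)\ge\frac{\varepsilon}{2}\bigl[(w-u_*)^2\wedge\delta|w-u_*|\bigr]$ is exactly what the paper establishes (its Eqs.~\eqref{loc_min_g} and~\eqref{min_g_at_infty}), obtained the same way: integrate the derivative bound on $[u_*-\delta,u_*+\delta]$, then use convexity (or monotonicity of $g'$) beyond.

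Where you diverge is in the recombination step. The paper assumes WLOG $|u-u_*|\ge|v-u_*|$ and splits on whether $|u-u_*|\le\delta$: if so, $(u-v)^2\le 2(u-u_*)^2+2(v-u_*)^2$ and the quadratic bound applies to both terms; if not, $(u-v)^2\le 2|u-v|\,|u-u_*|$ and the linear bound applies to the farther point alone. You instead package the growth as $\frac{\varepsilon}{2}f(|w-u_*|)$ with $f(x)=x^2\wedge\delta x$, prove the factor-$2$ subadditivity $f(a+b)\le 2\bigl(f(a)+f(b)\bigr)$, and push through the triangle inequality, so your case split is effectively on $|u-v|$ vs.\ $\delta$ rather than on $\max(|u-u_*|,|v-u_*|)$ vs.\ $\delta$. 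Both routes are elementary and yield the same constants; the paper's is a line or two shorter, while yours isolates a reusable fact about $f$ and keeps the two-regime bookkeeping in one place.
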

\begin{proof}
By integrating Equation~\eqref{hyp_loc_min_grad},
\begin{equation} \label{loc_min_g}
 \forall u \in [u_* - \delta; u_* + \delta], (g(u) - g(u_*)) \geq \frac{\varepsilon}{2} (u - u_*)^2 \enspace .
\end{equation}
Let 
\begin{equation}
 h(u) = \frac{1}{\delta} \left[g(u_* + \delta) - g(u_*) \right] [u - u_* ] \enspace . \label{def_h_cord}
\end{equation}
By convexity of $g$, for any $u \geq u_* + \delta$, $g(u) - g(u_*) \geq h(u)$. Hence by Equation~\eqref{loc_min_g} with $u = u_* + \delta$ and Equation~\eqref{def_h_cord},
\begin{equation} 
 \forall u \geq u_* + \delta, g(u) - g(u_*) \geq \frac{1}{\delta} \frac{\varepsilon}{2} \delta^2 [u - u_*] = \frac{\varepsilon \delta}{2} [u - u_*] \enspace .
\end{equation}
The same argument applies to the convex function $g(- \cdot)$ with minimum $-u_*$, which yields
\begin{equation}
 \label{min_g_at_infty}
 \forall u \in \mathbb{R}, |u - u_*| \geq \delta \implies g(u) - g(u_*) \geq \frac{\varepsilon \delta}{2} |u - u_*| \enspace .
\end{equation}
Let $(u,v) \in \mathbb{R}^2$.
Assume without loss of generality that $|u - u_*| \geq |v - u_*|$.
If $|u - u_*| \leq \delta$ then by Equation~\eqref{loc_min_g},
\begin{align*}
(u-v)^2 &\leq 2\bigl[u-u_* \bigr]^2 + 2 \bigl[v-u_* \bigr]^2 \\ 
&\leq \frac{4}{\varepsilon} \bigl[ g(u) + g(v) - 2 g\bigl(u_* \bigr)\bigr] 
\enspace . 
\numberthis \label{eq_varl1_ub1}.
\end{align*}
Otherwise, by Equation~\eqref{min_g_at_infty},
\begin{align*}
(u-v)^2 &\leq |u-v|\bigl[|u - u_*| + |v - u_*| \bigr] \\
&\leq 2|u-v| |u-u_*| \\
&\leq \frac{4}{\varepsilon \delta} |u-v| \bigl[g(u) - g\bigl(u_* \bigr) \bigr] \\
&\leq \frac{4}{\varepsilon \delta} |u-v| \bigl[ g(u) + g(v) - 2 g(u_*)\bigr]
\enspace . 
\numberthis \label{eq_varl1_ub2}
\end{align*}
\end{proof}

\subsection{Proof of Proposition~\ref{prop_sc}} \label{sec.proof_sc}
Now, we can prove Proposition~\ref{prop_sc}.
Let $R_x: u \mapsto \int |u-y| dF_x(y)$. By Lemma~\ref{risk_deriv} with $\psi = |\cdot|$, for all $v \in \mathbb{R}$,
 \begin{align*}
  R_x'(v) &= \int \left[ - \mathbb{I}_{v - y \leq 0} + \mathbb{I}_{v-y \geq 0} \right] dF_x(y) \\
  &= F_x(v) - \bigl[1 - F_x(v) \bigr] \\
  &= 2 \left[ F_x(v) - F_x(\bayes(x)) \right]
 \end{align*}
since by definition, $F_x(\bayes(x)) = \frac{1}{2}$.
Hence by hypothesis \eqref{hyp_loc_incr_F},
for all $u \in [\bayes(x) - b(x); \bayes(x) + b(x)]$,
\[ |R_x'(u)| \geq 2a(x) |u - \bayes(x)|. \]
Therefore by Lemma~\ref{lem_min_cvx}, for all $x \in \cX$ and $(u,v) \in \mathbb{R}^2$,
\begin{align*}
 (u-v)^2 &\leq \left( \frac{4}{a(x)} \vee \frac{4 |u-v|}{a(x) b(x)} \right) \left[ R_x(u) + R_x(v) - 2R_x \bigl( \bayes(x) \bigr)  \right] \\
 &\leq \left( \frac{4}{a_m} \vee \left( \frac{4}{\mu_m} |u-v| \right) \right) \left[ R_x(u) + R_x(v) - 2R_x \bigl( \bayes(x) \bigr)  \right].
\end{align*}
Since $\cpred: (u,y) \mapsto |u-y|$, it follows by taking $x = X$ that
\[ \left( \cpred(u,Y) - \cpred(v,Y) \right)^2 \leq (u-v)^2 \leq \left( \frac{4}{a_m} \vee \left( \frac{4}{\mu_m} |u-v| \right) \right) \bigl[\xloss{u} + \xloss{v} \bigr] ,\]
which implies hypothesis $SC_{\frac{4}{a_m}, \frac{4}{\mu_m}}$.
\qed

\subsection{Proof of Corollary~\ref{eps_reg}} \label{sec.proof_cor_eps_reg}
Corollary~\ref{eps_reg} is a consequence of Theorem~\ref{rkhs_thm}. Let us check that its assumptions are satisfied. 

\paragraph{Compatibility hypothesis $(Comp_1(c^{eps}_0,c^{eps}_{\varepsilon}))$}
Fix $x \in \cX$ and let $p_x,F_x$ be the pdf and cdf corresponding to the distribution $Y$ given $X = x$. By assumption, $p_x$ is symmetric; $\bayes(x)$ can be chosen equal to the center of symmetry (recall that the contrast function here is $\cfun(t,(x,y)) = c_0^{eps}(t(x),y) = |t(x)-y|$, so any conditional median is a possible value for $\bayes(x)$). 
Let 
\begin{equation}
R_{\varepsilon, x}: u \mapsto \int_y c^{eps}_\varepsilon(u,y) p_x(y) \mathrm{d}y = \int \psi_{\varepsilon}(u-y) p_x(y) \mathrm{d}y \enspace , \label{eq_R_eps}
\end{equation}
where $\psi_{\varepsilon}(z) = (|z| - \varepsilon)_+ $ for any $z \in \mathbb{R}$. 
Lemma C.1 applies, since
$p_x$ is symmetric by assumption and $\psi_{\varepsilon}$ is even, convex and 1-Lipschitz. 

 Hence for any $\varepsilon \geq 0$, $R_{\varepsilon, x}$ has a minimum at $\bayes(x)$ and is differentiable, with
 \begin{align*}
   R'_{\varepsilon, x}(u) &= \int \psi_{\varepsilon}'(u-y) p_x(y) \mathrm{d}y = \int \left[-\mathbb{I}_{u-y \leq -\varepsilon} + \mathbb{I}_{u-y \geq \varepsilon} \right] p_x(y) \mathrm{d}y \\
   &= F_x(u-\varepsilon) - \left[ 1 - F_x(u + \varepsilon) \right] \enspace .
   \numberthis \label{risk_grad}
 \end{align*}
Therefore, for any $\varepsilon \geq 0$ and $u \in \mathbb{R}$,
\begin{equation} \label{eps_deriv_risk}
R'_{\varepsilon,x}(u) - R'_{0,x}(u) = \int_{0}^{\varepsilon} \left[ - p_x(u-t) + p_x(u+ t) \right] \mathrm{d}t \enspace .
\end{equation}
Now, assume 
that $u \geq \bayes(x)$. By symmetry of $p_x$ around $\bayes(x)$, for all $t \geq 0$,
\begin{align*}
p_x(u-t) &= p_x(\bayes(x) + (u - \bayes(x) - t)) \\
&= p_x(\bayes(x) + |u - \bayes(x) - t|) \enspace . \numberthis \label{unimod_min}
\end{align*}
Since $p_x$ is unimodal, its mode is $\bayes(x)$ and $p_x$ is non-increasing on $\left[\bayes(x); + \infty \right)$. It follows from Equation~\eqref{unimod_min} that for all $u \geq \bayes(x)$ and $t \geq 0$, 
\begin{align*}
p_x(u-t)
&\geq p_x(\bayes(x) + |u - \bayes(x)| + t) \\
&= p_x(u+t). \numberthis \label{px_ineq}
\end{align*}
Therefore, by Eq.~\eqref{eps_deriv_risk} and \eqref{px_ineq}, for all $u \geq \bayes(x)$ and $\varepsilon \geq 0$,
$R'_{\varepsilon,x}(u) \leq R'_{0,x}(u)$.
By integration, this implies that for all $u \geq \bayes(x)$,
\begin{equation} 
R_{\varepsilon,x}(u) - R_{\varepsilon,x}(\bayes(x)) \leq R_{0,x}(u) -  R_{0,x}(\bayes(x)) \enspace . \label{eq_comp_R}
\end{equation}
By Equation~\eqref{eq_R_eps} and symmetry of $p_x$, $R_{\varepsilon,x}$ and $R_{0,x}$ are symmetric around $\bayes(x)$, hence inequality \eqref{eq_comp_R} is also valid when $u \leq \bayes(x)$.
Taking $x = X$, $u = t(X)$ and integrating, we get
$\risk_{c_\varepsilon^{eps}}(t) - \risk_{c_\varepsilon^{eps}}(s) \leq \risk_{c_0^{eps}}(t) - \risk_{c_0^{eps}}(s)$ which proves $Comp_1(c^{eps}_0,c_\varepsilon^{eps})$.

\paragraph{Hypothesis $SC_{4\sigma,8}$}
We first compute a lower bound on $R_{0,x}$. 

Let $q_{x,\frac{1}{4}} = \sup \{ y | F_x(y) \leq \frac{1}{4} \}$ and $q_{x,\frac{3}{4}} = \inf \{ y | F_x(y) \geq \frac{3}{4} \}$. By continuity of $F_x$, $F_x(q_{x,\frac{1}{4}}) = \frac{1}{4}$ and $F_x(q_{x,\frac{3}{4}}) = \frac{3}{4}$. 
Let $\sigma(x) = q_{x,\frac{3}{4}} - q_{x,\frac{1}{4}}$, which is the smallest determination of the interquartile range. By symmetry of $p_x$ around $\bayes(x)$, $\frac{1}{2} \bigl[ q_{x,\frac{1}{4}} + q_{x,\frac{3}{4}} \bigr] = \bayes(x)$, therefore $q_{x,\frac{3}{4}} = \bayes(x) + \frac{\sigma(x)}{2}$ and $q_{x,\frac{1}{4}} = \bayes(x) - \frac{\sigma(x)}{2}$. 

For any $u \in \left[\bayes(x) - \frac{\sigma(x)}{2} ; \bayes(x) + \frac{\sigma(x)}{2} \right]$, by symmetry of $p_x$ around $\bayes(x)$,
\begin{align*}
\left| F_x(u) - F_x \bigl( \bayes(x) \bigr) \right| 
&= \int_{\bayes(x)}^{\bayes(x) + |u - \bayes(x)|} 2p_x(v) \mathrm{d}v 
\\
&= |u-\bayes(x)| \frac{1}{|u - \bayes(x)|} \int_{\bayes(x)}^{\bayes(x) + |u - \bayes(x)|} 2p_x(v) \mathrm{d}v \enspace .
\end{align*}
Since $p_x$ is non-increasing on $[\bayes(x); + \infty)$ and $|u - \bayes(x) | \leq \frac{\sigma(x)}{2}$,
\begin{align*}
\left| F_x(u) - F_x \bigl( \bayes(x) \bigr) \right| &\geq |u - \bayes(x)| \frac{2}{\sigma(x)} \int_{\bayes(x)}^{\bayes(x) + \frac{\sigma(x)}{2}} 2p_x(v) \mathrm{d}v  \\
&= |u - \bayes(x)| \frac{4}{\sigma(x)} \left[ F_x \bigl( q_{x,\frac{3}{4}} \bigr) - F_x \bigl(\bayes(x) \bigr) \right] \\
&= \frac{|u - \bayes(x)|}{\sigma(x)} \enspace .
\end{align*}
Hence, by Proposition~\ref{prop_sc} with $a(x) = \frac{1}{\sigma(x)}$ and $b(x) = \frac{\sigma(x)}{2}$, $(\cpred,X,Y)$ satisfies hypothesis $SC_{4\sigma,8}$.

\paragraph{Conclusion}
To conclude, we apply Theorem~\ref{rkhs_thm} with $\kappa = 1, C = 1, 
 L = 1$ (since  $c^{eps}_0$ and  $c^{eps}_\varepsilon$ are 1-Lipschitz), 
 $\cvxct = 4\sigma$ and $\slope = 8$.
 Since constants $b_1,b_2$ of Theorem~\ref{rkhs_thm} only depend on $\kappa,L,C,\slope$ and all these parameters have now received explicit values, the constants $b_1,b_2$ are now absolute. 

\section{Classification: proof of Theorem~\ref{thm_classif}} \label{sec_classif}
In the proof of Theorem~\ref{agcv_mean}, we used convexity of the risk to show that the risk of the average was less than the average of the risk. 
A property of this type also holds in the setting of classification, with the average replaced by the majority vote.

\begin{proposition}
\label{maj_vote}
In the classification classification ---see Example~\ref{classif_pbm}---,
let $(\ERM{i})_{1 \leq i \leq V}$ denote a finite family of functions $\cX \rightarrow \cY$ and let 
$\ERM{}^{\text{mv}}$ be some majority vote rule: 
$\forall x \in \cX$, $\ERM{}^{\text{mv}}(x) \in \argmax_{y \in \cY} \lvert \{i \in [V] : \ERM{i}(x) = m \} \rvert $. 
Then, 
\[ 
\ell(\bayes,\ERM{}^{\text{mv}}) 
\leq \frac{M}{V} \sum_{i = 1}^V \ell(\bayes,\ERM{i})
\qquad \text{and}\qquad 
\risk(\ERM{}^{\text{mv}}) \leq  \frac{2}{V} \sum_{i = 1}^V \risk(\ERM{i}) 
\enspace.\]
\end{proposition}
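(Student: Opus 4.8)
I would prove Proposition~\ref{maj_vote} as follows. The plan is to reduce both inequalities to pointwise statements in $x \in \cX$ and then integrate. Write $\eta_j(x) = \bE[\un_{Y=j} \mid X = x]$ for $j \in \cY$, so that $\risk(f) = \bE[1 - \eta_{f(X)}(X)]$ for any measurable $f : \cX \to \cY$ and, since the Bayes classifier satisfies $\eta_{\bayes(x)}(x) = \max_{k \in \cY} \eta_k(x)$,
\[ \loss{f} = \risk(f) - \risk(\bayes) = \bE\bigl[ w(X, f(X)) \bigr] , \qquad w(x,j) := \max_{k \in \cY} \eta_k(x) - \eta_j(x) \geq 0 . \]
For fixed $x$, set $N_j(x) = \bigl\lvert \{ i \in \{1,\dots,V\} : \ERM{i}(x) = j \} \bigr\rvert$, so that $\sum_{j \in \cY} N_j(x) = V$, and $j^\star := \ERM{}^{\text{mv}}(x) \in \argmax_{j \in \cY} N_j(x)$.

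First I would prove the risk inequality via the pointwise bound: for all $x \in \cX$ and $y \in \cY$,
\[ \un\bigl\{ \ERM{}^{\text{mv}}(x) \neq y \bigr\} \leq \frac{2}{V} \sum_{i=1}^V \un\bigl\{ \ERM{i}(x) \neq y \bigr\} . \]
This is clear if $\ERM{}^{\text{mv}}(x) = y$. Otherwise, if $N_y(x) > V/2$ then every other label receives strictly fewer than $V/2$ votes, so $y$ is the unique maximiser of $j \mapsto N_j(x)$, forcing $\ERM{}^{\text{mv}}(x) = y$ --- a contradiction; hence $N_y(x) \leq V/2$ and $\sum_i \un\{\ERM{i}(x) \neq y\} = V - N_y(x) \geq V/2$, which gives the bound. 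Plugging $y = Y$ and taking $\bE$ yields $\risk(\ERM{}^{\text{mv}}) \leq \frac{2}{V} \sum_i \risk(\ERM{i})$.

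For the excess-risk inequality, the key claim is the pointwise estimate $w\bigl(x, \ERM{}^{\text{mv}}(x)\bigr) \leq \frac{M}{V} \sum_{i=1}^V w\bigl(x, \ERM{i}(x)\bigr)$. Abbreviating $\eta^\star = \max_{k \in \cY} \eta_k(x)$, I would write $\sum_{i=1}^V w(x, \ERM{i}(x)) = \sum_{j \in \cY} N_j(x) w(x,j) = V \eta^\star - \sum_{j} N_j(x) \eta_j(x)$, then bound $\sum_{j} N_j(x) \eta_j(x) \leq N_{j^\star}(x) \eta_{j^\star}(x) + \eta^\star \bigl( V - N_{j^\star}(x) \bigr)$ using $\eta_j(x) \leq \eta^\star$ and $N_j(x) \geq 0$; this collapses to $\sum_{i} w(x, \ERM{i}(x)) \geq N_{j^\star}(x)\, w(x, j^\star)$. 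Since $\cY$ has $M$ elements and $\sum_{j} N_j(x) = V$, the pigeonhole principle gives $N_{j^\star}(x) = \max_{j} N_j(x) \geq V/M$, so --- the first step below being trivial as $w \geq 0$ --- $w(x, j^\star) \leq \frac{M}{V} N_{j^\star}(x) w(x, j^\star) \leq \frac{M}{V} \sum_i w(x, \ERM{i}(x))$. Integrating over $X$ gives the claim. The computations are elementary; the main (though mild) obstacle is the collapse to $\sum_{j} N_j(x) w(x,j) \geq N_{j^\star}(x) w(x,j^\star)$ and recognising that the pigeonhole bound $N_{j^\star}(x) \geq V/M$ is precisely what produces the constant $M$. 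One should also record that $x \mapsto \un\{\ERM{}^{\text{mv}}(x) \neq Y\}$ and $x \mapsto w(x, \ERM{}^{\text{mv}}(x))$ are measurable, so that the final integrations are legitimate.
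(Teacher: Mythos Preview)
Your proof is correct and follows essentially the same route as the paper's: both arguments reduce to the pointwise inequalities, use the pigeonhole bound $N_{j^\star}(x)\geq V/M$ for the excess-risk inequality, and the observation $N_y(x)\leq V/2$ when $\ERM{}^{\text{mv}}(x)\neq y$ for the risk inequality. The only difference is cosmetic --- you reach $\sum_j N_j(x)\,w(x,j)\geq N_{j^\star}(x)\,w(x,j^\star)$ via an algebraic expansion of $\sum_j N_j(x)\eta_j(x)$, whereas the paper obtains the same inequality in one line by simply dropping the nonnegative summands with $j\neq j^\star$.
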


\begin{proof}
For any $y \in \cY$, define $\eta_y: x \mapsto \mathbb{P}[Y = y | X = x]$. 
Then, for any $f \in \parspace$, $\risk(f) = \bE [ 1 - \eta_{f(X)} (X) ]$ hence 
$\bayes(X) \in \argmax_{y \in \cY} \eta_y(X) $ 
and 
\[ 
\ell(\bayes,f) 
= \bE \Bigl[ \max_{y \in \cY} \eta_y(X) - \eta_{f(X)}(X) \Bigr] 
= \bE \bigl[ \eta_{\bayes(X)}(X) - \eta_{f(X)}(X) \bigr] 
\enspace. 
\]
We now fix some $x \in \cX$ and define $\mathcal{C}_x(y) = \{i \in [V]: \ERM{i}(x) = y \}$ 
and $C_x = \max_{y \in \cY} |\mathcal{C}_x(y)|$.
Since $C_x M\ge \sum_{y \in \cY} |\mathcal{C}_x(y)| = V $,
it holds $C_x \geq V/M$.
On the other hand, by definition of $\ERM{}^{\text{mv}}$,
\begin{align*}
 \frac{1}{V} \sum_{i = 1}^V \bigl[ \underbrace{ \eta_{\bayes(x)}(x) - \eta_{\ERM{i}(x)}(x) }_{\geq 0} \bigr]
 &\geq \frac{C_x}{V} \bigl( \eta_{\bayes(x)}(x) - \eta_{\ERM{}^{\text{mv}}(x)}(x) \bigr) 
 \geq \frac{1}{M} \bigl( \eta_{\bayes(x)}(x) - \eta_{\ERM{}^{\text{mv}}(x)}(x) \bigr) 
\enspace.
\end{align*}
Integrating over $x$ (with respect to the distribution of $X$) 
yields the first bound.

For the second bound, 
fix $x \in \cX$ and define $\mathcal{C}_x(y)$ and $C_x$ as above. 
Let $y \in \cY$ be such that $\ERM{}^{\text{mv}}(x) \neq y$. 
Since $y$ occurs less often than 
$\ERM{}^{\text{mv}}(x)$ 
among $\ERM{1}(x) , \ldots, \ERM{V}(x)$, 
we have $|\mathcal{C}_x(y)| \leq V/2$. 
Therefore,
\[
\frac{1}{V} \sum_{i = 1}^V \mathbb{I}_{\{ \ERM{i}(x) \neq y \} }
= \frac{V - |\mathcal{C}_x(y)|}{V} 
\geq \frac{1}{2} 
\enspace.\]
Thus
\[ 
\ERM{}^{\text{mv}}(x) \neq y 
\implies  
\frac{1}{V} \sum_{i = 1}^V \mathbb{I}_{\{ \ERM{i}(x) \neq y \} }
\geq \frac{1}{2}
\enspace.\]
Hence, for any $y \in \cY$, 
\[ 
\mathbb{I}_{\{\ERM{}^{\text{mv}}(x) \neq y \} }
\leq \frac{2}{V} 
\sum_{i = 1}^V \mathbb{I}_{\{ \ERM{i}(x) \neq y \} }
\enspace.\]
Taking expectations with respect to $(x,y)$ yields $\risk(\ERM{}^{\text{mv}} ) \leq 2 V^{-1} \sum_{i = 1}^V \risk(\ERM{i})$.
\end{proof}

We can now proceed with the proof of Theorem~\ref{thm_classif}.
\
\begin{proof}
The proof relies on a result by \cite[Eq.~(8.60), which is itself a consequence of Corollary~8.8]{Mas:2003:St-Flour}, which holds true as soon as 
\begin{equation} \label{eq.hyp-Eq-8.60-Massart}
\forall t \in \parspace, \qquad 
\text{Var} \bigl( \mathbb{I}_{\{ t(X) \neq Y \}} - \mathbb{I}_{ \{\bayes(X) \neq Y\} } \bigr) 
\leq  \Bigl[ w \bigl(\sqrt{\ell(\bayes,t)} \bigr) \Bigr]^2 
\end{equation}
for some nonnegative and nondecreasing continuous function $w$ on $\mathbb{R}^+$, 
such that $x \mapsto w(x)/x$ is nonincreasing on $(0,+\infty)$ and $w(1) \geq 1$. 

Let us first prove that assumption \eqref{eq.hyp-Eq-8.60-Massart} holds true. 
On one hand, since $\cY = \{0,1\}$, for any $t \in \parspace$, 
\begin{multline}
\label{eq.maj-Var-incr}
\text{Var} \bigl(\mathbb{I}_{\{  t(X) \neq Y\} } - \mathbb{I}_{ \{\bayes(X) \neq Y\} } \bigr)
\leq
\bE[|  \mathbb{I}_{ \{t(X) \neq Y\} } - \mathbb{I}_{ \{\bayes(X) \neq Y \}} |^2] \\
= \bE[ \mathbb{I}_{ \{t(X) \neq \bayes(X) \}} ] 
= \bE \bigl[ \lvert t(X) - \bayes(X)|]
\enspace.
\end{multline}
On the other hand, since we consider binary classification with the 0--1 loss, 
for any $t \in \parspace$ and $h>0$, 
\begin{align*} 
\ell(\bayes, t) 
&= \bE \bigl[ |2\eta(X) - 1| \cdot |t(X) - \bayes(X)| \bigr] 
\qquad  &\text{by \cite[Theorem~2.2]{Dev_Gyo_Lug:1996}} 
\\
&\geq h \bE \bigl[ |t(X) - \bayes(X)| \mathbb{I}_{\{|2\eta(X) - 1| \geq h\}} \bigr] 
\qquad  &
\\
&\geq h \bE \bigl[ |t(X) - \bayes(X)|  - \mathbb{I}_{\{|2\eta(X) - 1| < h\}} \bigr] 
\qquad  &\text{since } \NormInfinity{t - \bayes} \leq 1
\\
&\geq h \bE \bigl[ |t(X) - \bayes(X)| \bigr] - r h^{\beta+1}
\qquad  &\text{by \eqref{hyp.MA}.} 
\end{align*}
This lower bound is maximized by 
taking 
\[ 
h = h_* := \left( \frac{ \bE \bigl[ |t(X) - \bayes(X)| \bigr] } { r(\beta+1) } \right)^{\frac{1}{\beta}} 
\enspace , 
\]
which belongs to $[0,1]$ since 
$r \geq 1$  
and $\bE \bigl[ |t(X) - \bayes(X)| \bigr] \leq 1$. 
Thus, we obtain  
\begin{align*} 
\ell(\bayes, t) 
\geq 
h_* \frac{\beta}{\beta+1} \bE \bigl[ |t(X) - \bayes(X)| \bigr]
= \frac{\beta}{(\beta+1)^{(\beta+1)/\beta}  r^{1/\beta}} \bE \bigl[ |t(X) - \bayes(X)| \bigr]^{(\beta+1)/\beta} 
\end{align*}
hence Eq.~\eqref{eq.maj-Var-incr} leads to 
\begin{align*}
\text{Var} \bigl( \mathbb{I}_{ \{t(X) \neq Y \}} - \mathbb{I}_{ \{\bayes(X) \neq Y\} } \bigr)
\leq
\bE \bigl[ |t(X) - \bayes(X)| \bigr] 
&\leq 
\frac{\beta+1}{\beta^{\beta/(\beta+1)}} r ^{\frac{1}{\beta + 1}} \ell(\bayes,t)^{\frac{\beta}{\beta + 1}} 
\leq 2r ^{\frac{1}{\beta + 1}}\ell(\bayes,t)^{\frac{\beta}{\beta + 1}} 
\enspace.
\end{align*}
Therefore, Eq.~\eqref{eq.hyp-Eq-8.60-Massart} holds true with 
$w(u) = \sqrt{r_1} u^\frac{\beta}{\beta + 1}$ and $r_1=2r ^{\frac{1}{\beta + 1}}$, 
which statisfies the required conditions.
So, by \cite[Eq.~(8.60)]{Mas:2003:St-Flour}, for any $\theta \in (0,1)$, 
\begin{equation} \label{eq:ho_oracle}
\bE \bigl[ \lossb{\ERMho{ T}} \,\vert\, D_n^T \bigr] 
\leq \frac{1+\theta}{1-\theta} \inf_{m \in \cM} \lossb{\learnrule_m(D_{n}^T)} + \frac{\delta_*^2}{1-\theta} 
\left[ 2\theta + \log(e \lvert \cM \rvert)\left(\frac{1}{3} + \theta^{-1} \right) \right] 
\end{equation} 
where $\delta_*$ is the positive solution of the fixed-point equation 
$w(\delta_*) = \sqrt{n_v} \delta_*^2$, that is 
$\delta_*^2 = ( r_1 / n_v )^\frac{\beta + 1}{\beta + 2} 
$. 
Taking expectations with respect to the training data $D_n^T$, 
we obtain 
\begin{align*}
\bE \bigl[ \loss{\ERMho{T}} \bigr] 
&\leq 
\frac{1+\theta}{1 - \theta} \bE\left[ \inf_{m \in \cM} \lossb{\learnrule_m(D_{n}^T)} \right] +\frac{2 r^\frac{1}{\beta + 2}}{1-\theta} \frac{2\theta + \log(e \lvert \cM \rvert)\left(\frac{1}{3} + \theta^{-1} \right)}{n_v^\frac{\beta + 1}{\beta + 2} }
\enspace.
\end{align*}
Under assumptions \eqref{hyp.T},
$\bE \bigl[ \ell(\bayes,\ERMho{T}) \bigr] $ and $\bE \bigl[ \risk(\ERMho{T}) \bigr]$ 
do not depend on $T \in \cT$  
(they only depend on $T$ through its cardinality $n_t$). 

Now, by Proposition~\ref{maj_vote} applied to $(\ERMho{T})_{T \in \cT}$, 
\[ 
\bE \bigl[ \ell(\bayes,\ERMmv{\cT}) \bigr] 
\leq 2 \bE \bigl[ \ell(\bayes,\ERMho{ T_1}) \bigr] 
\leq 2\frac{1+\theta}{1 - \theta} \bE \left[ \inf_{m \in \cM} \lossb{\learnrule_m(D_{n}^T)} \right] 
  + \frac{4 r^\frac{1}{\beta + 2}}{1-\theta} \frac{2\theta + \log(e \lvert \cM \rvert)\left(\frac{1}{3} + \theta^{-1} \right)}{n_v^\frac{\beta + 1}{\beta + 2} } 
\enspace.
\]
Taking $\theta = 1/5$ leads to the result. 
\end{proof}

\bibliographystyle{plain}
\bibliography{premier_article_biblio}

\end{document}